\theoremstyle{plain}
\newtheorem{theorem}{Theorem}[section]
\newtheorem{remark}[theorem]{Remark}
\newtheorem{proposition}[theorem]{Proposition}
\theoremstyle{definition}
\newtheorem{example}[theorem]{Example}
\def\Om{\Omega}
\def\intO{\int_\Om}
\def\d{\text{\normalfont d}}
\def\x{\mathbf{x}}
\def\Id{\mathrm{Id}}
\def\prox{\text{prox}}
\def\Mc{\mathcal{M}}
\def\Pc{\mathcal{P}}
\def\Xc{\mathcal{X}}
\def\Bc{\mathcal{B}}
\def\Tc{\mathcal{T}}
\def\Gc{\mathcal{G}}
\def\Fc{\mathcal{F}}
\def\R{\mathbb{R}}
\def\Rn{\R^n}
\def\Rd{\R^d}
\def\C{\mathcal{C}} 
\newcommand\V{V}
\newcommand\Vd{{V^d}}
\newcommand\Co{C}
\def\vp{\varphi}
\def\<{\langle} \def\>{\rangle}
\DeclareMathOperator\Lip{Lip}
\DeclareMathOperator\FLip{Fr-Lip}
\DeclareMathOperator\interior{int}
\DeclareMathOperator\diam{diam}
\def\bN{\mathbb{N}}
\DeclareMathOperator\Tr{Tr}
\DeclareMathOperator\diag{diag}
\def\sign{\text{sign}}
\DeclareMathOperator*{\argmin}{argmin}
\DeclareMathOperator\KR{KR}
\let\div\underfined
\DeclareMathOperator\div{div}
\newcommand{\PreserveBackslash}[1]{\let\temp=\\#1\let\\=\temp}
\newcolumntype{R}[1]{>{\raggedleft\arraybackslash }b{#1}}
\newcolumntype{L}[1]{>{\raggedright\arraybackslash }b{#1}}
\newcolumntype{C}[1]{>{\centering\arraybackslash }m{#1}}
\numberwithin{equation}{section}
\title[L1 optimal transport for FWI]{Unbalanced L1 optimal transport for vector valued measures and application to Full Waveform Inversion}
\begin{document}
	
	\date{\today}
	
	\author{Gabriele Todeschi}
	\address{Gabriele Todeschi (\href{mailto:gabriele.todeschi@univ-eiffel.fr}{\tt gabriele.todeschi@univ-eiffel.fr}), Univ. Grenoble Alpes, ISTerre, F-38000 Grenoble, France; now LIGM, Univ. Gustave Eiffel, CNRS, F-77454 Marne-la-Vallée, France
	} 

	\author{Ludovic Métivier}
	\address{Ludovic Métivier,
	(\href{mailto:ludovic.metivier@univ-grenoble-alpes.fr}{\tt ludovic.metivier@univ-grenoble-alpes.fr})
	Univ. Grenoble Alpes, CNRS, LJK, ISTerre, F-38000 Grenoble, France}
	
	\author{Jean-Marie Mirebeau}
	\address{Jean-Marie Mirebeau,
	(\href{mailto:jean-marie.mirebeau@ens-paris-saclay.fr}{\tt jean-marie.mirebeau@ens-paris-saclay.fr})
	CNRS, Centre Borelli, ENS Paris-Saclay, University Paris-Saclay, 91190, Gif-sur-Yvette, France}

	\begin{abstract}
		Optimal transport has recently started to be successfully employed to define misfit or loss functions in inverse problems. However, it is a problem intrinsically defined for positive (probability) measures and therefore strategies are needed for its applications in more general settings of interest. In this paper we introduce an unbalanced optimal transport problem for vector valued measures starting from the $L^1$ optimal transport. By lifting data in a self-dual cone of a higher dimensional vector space, we show that one can recover a meaningful transport problem. 
		We show that the favorable computational complexity of the $L^1$ problem, an advantage compared to other formulations of optimal transport, is inherited by our vector extension.
		We consider both a one-homogeneous and a two-homogeneous penalization for the imbalance of mass, the latter being potentially relevant for applications to physics based problems. In particular, we demonstrate the potential of our strategy for full waveform inversion, an inverse problem for high resolution seismic imaging.
	\end{abstract}

	\maketitle
	
	\noindent
	{\bf Keywords:} Optimal transport, vector valued measures, inverse problems, seismic imaging, proximal splitting
	
	\section{Introduction}
	
	Optimal transport is the problem of finding the optimal way of reallocating distributions of mass. Among its many interesting features, it allows to define, in a natural and meaningful way, distances between probability measures. For this reason, it has started recently to be considered for applications to inverse problems to propose misfits between real and synthetic data. Inverse problems are usually highly non-linear and non-convex variational problems and a fundamental role is played by the misfit or loss function. The use of optimal transport based distances has revealed to be helpful in attenuating the lack of well-posedness of the problem and increasing the robustness of the inversion approach. Full Waveform Inversion (FWI), a seismic inversion problem, is one particular example, with a lot of recent literature (see for example the review paper \cite{metivier2022review}).

	\subsection{Optimal transport}

	Consider $\Om\subset\Rd$, compact and convex with non-empty interior, and two positive probability measures $\mu,\nu\in\Pc(\Om)$, that is non-negative measures with the same total mass, $\intO\mu=\intO\nu$. 
	Given a scalar non-negative real valued continuous function $c:\Omega\times\Omega\rightarrow\R_+$, the optimal transport problem between $\mu$ and $\nu$ is
	\begin{equation}\label{eq:OT}
		\text{OT}_c(\mu,\nu) \coloneqq\min_{\gamma\in\Pi(\mu,\nu)} \int_{\Om\times\Om} c(x,y) \d \gamma \,,
	\end{equation}
	where the set of admissible transport plans $\gamma$ is defined as
	\begin{equation}
	\label{eq:positive_couplings}
		\Pi(\mu,\nu)=\{\gamma\in\Pc(\Om\times\Om)\,, (\pi_1)_\#\gamma=\mu, (\pi_2)_\#\gamma=\nu\}\,,	
	\end{equation}
	i.e.\ the subset of probability measures on the product space $\Om\times\Om$ that have first and second marginals $\mu$ and $\nu$.
	The ground cost $c(x,y)$ is the cost for transporting one unit of mass from $x$ to $y$. An interesting case is provided by the choice $c(x,y)=|x-y|^p$. In this case \eqref{eq:OT} is called $L^p$ optimal transport problem and can be shown to provide a ($p$-th power of a) distance on the space of probability measures which is referred to as the $p$-Wasserstein distance, $W_p$. This distance translates in a natural way to the measure setting properties of the ground cost function. Importantly, it characterizes weak convergence of measures. We suggest for example the monographs \cite{santambrogio2015optimal,villani2003topics} for an introduction to the subject.

	In this paper we deal with the specific case $p=1$, namely the $L^1$ optimal transport problem.
	In this case problem \eqref{eq:OT} enjoys three equivalent formulations:
	\begin{align}
		W_1(\mu,\nu)\coloneqq &\min_{\gamma\in\Pi(\mu,\nu)} \intO |y-x| \d \gamma  \label{eq:OT1} \\
		=&\max_{\phi\in\C(\Omega)} \left\{\intO \phi (\mu-\nu), \; |\nabla \phi|_\infty \le 1  \right\} \label{eq:dual} \\
		=&\min_{\sigma\in\Mc(\Omega;\Rd)} \left\{ \int_\Om |\sigma| \,, \; \div (\sigma)=\mu-\nu \right\}. \label{eq:BP}
	\end{align}
	The maximization problem \eqref{eq:dual} is known as the dual formulation of the $L^1$ transport whereas \eqref{eq:BP} is known as Beckmann's minimal flow problem.
	These two formulations are particularly interesting for at least two reasons. Firstly, they appear as simpler problems as they present only local constraints differently from \eqref{eq:OT1}. In particular, notice that given a solution $\gamma$ it is possible to recover solutions $\sigma$ and $\phi$ (see \cite[Chapter 4]{santambrogio2015optimal}) but the opposite is not true. For this reason they are much more amenable for numerical computations. Furthermore, they can be generalized straightforwardly to more general signed measures, contrary to the original form of the problem.

	\subsection{Applications to inverse problems}
	
	Two major issues arise for the application of optimal transport to inverse problems. The first one is the computational cost. Problem \eqref{eq:OT} is evidently more expensive than evaluating simple $L^p$ distances which are usually considered. However, there has been a significant improvement on this side in the past years (see for example \cite{peyre2020computational,merigot2021optimal}), which mostly contributed to the recent development of optimal transport. We highlight also that for the specific applications to inverse problems computing an optimizer $\gamma$ is not strictly necessary: the only requirement is to be able to evaluate the misfit function, i.e.\ compute the transport cost, and compute its gradient with respect to the measures. This may allow to devise a cheaper numerical strategy. In this regard, the $L^1$ problem offers a cheaper alternative as previously remarked. 
	
	The second issue is that problem \eqref{eq:OT} is defined for probability measures, i.e.\ non-negative measures with fixed total mass. Generalizations have been proposed to deal with non-negative Radon measures \cite{piccoli2014generalized,chizat2018interpolating}. However, inverse problems may have to deal in general with signed data which therefore need to be handled properly.
	Trivial transformation such as transporting separately positive and negative parts \cite{engquist2014application}, taking the exponential or powers of the data \cite{qiu2017full}, adding positive mass everywhere \cite{yang2018analysis}, may not result effective. These transformations tend to destroy the structure of the data. Two effective and general ideas have been proposed in \cite{metivier2016measuring,metivier2018optimal}.
	
	In \cite{metivier2016measuring} the authors started from the dual formulation \eqref{eq:dual} of the $L^1$ problem to propose an optimal transport misfit for signed data. As previously remarked, this optimal transport problem has the advantage that it can be extended directly. They considered then the misfit provided by: 
	\begin{equation}\label{eq:KRscal}
		\KR(\mu,\nu)=\max_{\phi\in\C(\Omega)} \left\{\intO \phi (\mu-\nu), \; |\nabla \phi|_\infty\le1, \, |\phi|_{\infty}\le \lambda  \right\}
	\end{equation}
	for some parameter $\lambda\in\R_+$. The $L^\infty$ bound on the potential is added in order to be able to deal with measures that do not have the same total mass. Problem \eqref{eq:KRscal} is known as the Kantorovich-Rubenstein norm, or bounded-Lipschitz metric, and is a distance on the space of signed measures. This misfit function provides good results and the approach is also computationally appealing, since the problem can be solved effectively up to high dimensions.
	However, as problem \eqref{eq:KRscal} is not framed on positive measures it does not ensure sensitivity to horizontal shifts of the data (see Example \ref{ex:KRshift}). The growth of the optimal transport cost with respect to shifts of the measures is a key feature for its application to inverse problems.
	
	In \cite{metivier2018optimal}, the authors proposed to lift signed data to a higher dimensional space, where it can be regarded as a positive measure on its trace.
	Then, the positive lifted measures can be discretized and transported according to \eqref{eq:OT}.
	Since the value assigned to the measure is arbitrary, the ground cost $c(x,y)$ needs to be tuned properly in order to encode transport along the time direction and not along the amplitude one. The approach in the end is effective and leads to a robust inversion problem. On the other hand, it is computationally expensive and not suited to scale to high dimensions.

	\subsection{Contribution and structure of the paper}
	
	In this paper we propose a generalization of the $L^1$ optimal transport problem \eqref{eq:OT1} to vector valued measures. 
	Assuming to be able to lift, without loss of information, signed data into a (self-dual) cone $\Co$ of a general vector space $\V$, this generalization can serve as a meaningful misfit function in inverse problems. In this sense, our approach can be seen as an extension of the Kantorovich-Rubenstein approach \eqref{eq:KRscal} proposed in \cite{metivier2016measuring}, aiming at recovering a transport problem between positive measures. By extending the $L^1$ transport we take advantage of the lower computational complexity this problem offers.
	We consider as a specific application the FWI problem, where one can consider multi-component signals as data and naturally lift them in the space of symmetric positive semi-definite (PSD) valued measures. Some good preliminary results we obtained indicate the effectiveness of the approach.

	Extensions of the optimal transport problem to vector valued measures (and more specifically PSD valued measures) have already been proposed recently, starting from the seminal work \cite{ning2013MatrixvaluedMO}, where the authors proposed a generalization of the formulation \eqref{eq:OT} inspired by quantum mechanics arguments. Later on, several other works appeared, considering either a static formulation \cite{peyre2016quantum} or extensions of the dynamical formulation for the $L^2$ transport problem \cite{chen2017matrix,carlen2017gradient,mittnenzweig2017entropic,brenier2020optimal,chen2018vector}.
	The most natural way of extending optimal transport to a vector setting is probably to consider the dual formulation \eqref{eq:dual} (or the Beckmann's one \eqref{eq:BP}) of the $L^1$ transport.
	This has been originally done in \cite{ning2014metrics,chen2017matricial,ryu2018vector}, where the authors proposed an extension of $L^1$ transport to the setting of PSD valued measures inspired again from quantum mechanics. More recently, in \cite{ciosmak2021optimal,ciosmak2021matrix} a similar extension has been considered for functional analysis applications.
	
	In this work we propose to use an SDMM algorithm \cite{combettes2011proximal}, a primal-dual proximal splitting optimization approach, for solving our discrete vector valued optimal transport problem. SDMM is an instance of ADMM algorithm,
	a popular approach for solving optimal transport problems in the scalar setting \cite{benamou2000computational,benamou2016augmented,lavenant2018dynamical,carrillo2022primal,natale2022mixed}. It enjoys nice convergence guarantees, with a sub-linear convergence rate, and is effective as long as high accuracy is not requested. A similar primal-dual approach has been considered in \cite{ryu2018vector} for solving the vector valued extension of optimal transport introduced in \cite{chen2018vector}.
	Importantly, thanks to the characteristics of the $L^1$ problem and of the SDMM algorithm, the vectorial extension we propose only imply a linear increase in the computational complexity with respect to scalar optimal transport
	(see Remark \ref{rmk:complexity}).
	Another popular approach for solving discrete optimal transport is entropic regularization \cite{cuturi2013sinkhorn,peyre2020computational}.
	This regularization technique allows for the use of the efficient Sinkhorn algorithm, which enjoys a linear convergence rate and an $N\log N$ complexity per iteration when implemented on cartesian grids, making it suitable to scale to large dimensions.
	In \cite{peyre2016quantum} the authors developed an entropic approach for an optimal transport model introduced therein for PSD valued measures.
	With respect to the scalar case, this requires the additional cost of performing eigendecompositions of matrices of size $n'\times n'$ (the size of the PSD valued extension). The increase in complexity is more than linear with respect to $n'$, but at least for small values one can use closed form solutions and the additional cost is moderate.
	Nevertheless, the blurring effect of entropic regularization may require low values of the temperature parameter to provide meaningful descent directions in the inverse problem, vanishing the favorable computational complexity.
	The constant of linear convergence rapidly degrades in fact for smaller and smaller values of $\varepsilon$ (converging exponentially fast to one in the worst case scenario).
	A numerical approach based on Sequential Quadratic Programming (SQP) is proposed in \cite{chen2018efficient} for the extension of dynamical optimal transport to the vector valued setting introduced in \cite{chen2017matrix}.
	Dynamical formulations of optimal transport demand a higher computational cost by computing full trajectories, making these in general less attractive for inverse problems.
	Moreover, the SQP approach requires the solution of large matrices, obtained via Newton or quasi-Newton methods, with size given by the total number of variables. For this reason, these approaches may not be suited to scale to large dimensions. Importantly, as the total number of variables is proportional to the dimension of the vector extension, the computational cost increases more than linearly with respect to this latter.
	
	After setting our notation in Section \ref{sec:notation}, we present the unbalanced $L^1$ optimal transport problem for vector valued measures in Section \ref{sec:vectorL1}.
	We first introduce the straightforward extension of the dual \eqref{eq:dual} and the Beckmann's \eqref{eq:BP} formulations, in the balanced setting.
	These coincide with the ones already considered in \cite{ciosmak2021matrix}.
	Differently from these and the other works we mentioned just above, we consider however a general setting and justify our results via convex analysis arguments.
	In order to deal with general measures, we propose an unbalanced problem similarly to the (scalar) unbalanced $L^1$ problem introduced in \cite{piccoli2014generalized}. We show its equivalence to the vector extension of the Kantorovich-Rubenstein norm \eqref{eq:KRscal} (analogously to what has been done in \cite{piccoli2016properties} in the scalar case) and also to a relaxed version of the Beckmann's formulation.
	We then consider a different way of penalizing transport in the unbalanced model, namely an $L^2$ penalization instead of an $L^1$, which can be beneficial for applications to inverse problems.
	In Section \ref{sec:discrete_model} we present our finite difference discretization and the implementation of the SDMM algorithm.
	Finally, in Section \ref{sec:numerics} we present some numerical results and the application to the inverse problem of FWI.

	\section{Notations}\label{sec:notation}
	
	The standard Euclidean scalar product and norm are denoted by $x\cdot y$ and $|\cdot|$, for $x,y\in\R^d$, where $d$ is the dimension of the compact optimal transport domain $\Omega\subset \R^d$.
	We denote by $\V$ another real vector space of dimension $n$ endowed with its scalar product $\langle \cdot, \cdot \rangle_{\V}$ and the respective norm $|\cdot|_{\V}$. In $\V$ we consider a convex cone $\Co$.
	We recall that the dual cone of $\Co$ is defined as
	\[
	\Co^*=\{u\in\V: \langle u, v\rangle_\V\ge 0, \, \forall v\in\V \},
	\]
	and we restrict to considering cones such that $\Co\subseteq\Co^*$. In particular, this implies that $\langle v_1,v_2 \rangle_\V\ge 0$, $\forall v_1,v_2\in\Co$.
	Possible examples are $\V=\Rn$ and $\Co$ equal to the non-negative orthant $[\R_+]^n$, or $\V=\mathrm{S}^{n'}$ the space of symmetric matrices and $\Co=\mathrm{S}^{n'}_+$ the subspace of positive semi-definite matrices (where $n = n' (n'+1)/2$). For these two cases in particular it holds $\Co=\Co^*$ and the cones are called self-dual.
	We further introduce the vector space $\V^d$ endowed with the scalar product $\langle \cdot, \cdot \rangle_{\Vd}=\sum_{l=1}^d \langle \cdot, \cdot \rangle_{\V}$ and the respective norm $|\cdot|_{\Vd}$.
	One has a natural isomorphism of vector spaces $V^d \simeq L(\R^d;V)$, and as a result $V^d$ is also equipped with the operator norm $|\cdot|_{L(\R^d;V)}$. 
	The two norms are comparable, namely
	\begin{equation}
	\label{eq:Fr_op_norms}
	|\cdot|_{L(\R^d;V)} \leq |\cdot|_{V^d} \leq \sqrt d |\cdot|_{L(\R^d;V)},
	\end{equation}
	which is easily established using the singular value decomposition.
	However in dimension $d>1$ and for $n>1$ these norms are distinct, and $|\cdot|_{L(\R^d;V)}$ is not related to an inner product. For $A,B \in L(\R^d;V)$, $\<A,B\>_{\Vd} = \mathrm{Tr}(A^\top B)$ is known as the Frobenius inner product.
	
	We denote $\C(\Om;\V)$ and $\C(\Om;\Vd)$ the spaces of continuous functions defined on the compact domain $\Om \subset \R^d$ and valued in $\V$ and $\Vd$. These spaces are endowed respectively with the sup norms
	\[
	|\phi|_{\V,\infty}=\sup_{x\in\Omega} |\phi(x)|_\V \,, \quad |\psi|_{\Vd,\infty}=\sup_{x\in\Omega} |\psi(x)|_\Vd \,, \quad \text{for $\phi\in\C(\Om;\V)$, $\psi\in\C(\Om;\Vd)$}.
	\]
	The space of finite vectorial measures valued in $\V$ is $\Mc(\Omega;\V)$ and is the dual space of $\C(\Om;\V)$ via the duality product $(\phi,\mu)\mapsto \intO \langle\phi,\mu\rangle_{\V}$.
	The convex subspace of measures which take value in the cone $\Co$ is $\Mc(\Omega;\Co)$.
	A measure $\mu \in \Mc(\Omega;\V)$ belongs to $\Mc(\Omega;\Co)$ if and only if $\int_\Omega \<\varphi,\mu\>_V \geq 0$ for all $\vp \in C^0(\Omega;\Co^*)$.
	To any measure $\mu$ we can associate a positive scalar measure $|\mu|_\V\in\Mc(\Omega;\R_+)$ defined as
	\begin{equation}\label{eq:totmeasure}
	|\mu|_\V (K)=\sup_{\substack{\varphi\in\C(K;\V)\\|\phi|_{\V,\infty}\le1}} \int_K \langle\varphi,\mu\rangle_{\V}\,,\quad
	\text{for all Lebesgue measurable subsets $K \subset \Omega$}\,.
	\end{equation}
	The map 
	\[
	\mu\in\Mc(\Omega;\V)\longmapsto |\mu|_{\Mc(\Omega;\V)}\coloneqq|\mu|_\V(\Omega)
	\]
	is a norm on $\Mc(\Omega;\V)$ and is called the total variation norm.
	On $\Mc(\Omega;\V)$ we consider also the weak-* topology.
	Importantly, closed balls of $\Mc(\Omega;\V)$ with respect to the strong topology are weakly-* compact.
	Furthermore, from definition \eqref{eq:totmeasure}, the total variation norm is lower semi-continuous with respect to the weak-* convergence.
	Finally, we define the space of finite vectorial measures valued in $\Vd$, $\Mc(\Omega;\Vd)$, in duality with $\C(\Om;\Vd)$ through $(\psi,\sigma)\mapsto \intO \langle \psi, \sigma \rangle_{\Vd}$. The space $\Mc(\Omega;\Vd)$ endowed with the total variation norm $|\sigma|_{\Mc(\Omega;\Vd)}$, defined in the same way as above, is again a normed vector space. In the following we will use the notation:
	\[
	|\mu|_{\Mc(\Omega;\V)}=\intO |\mu|_\V \,, \quad |\sigma|_{\Mc(\Omega;\Vd)}=\intO |\sigma|_\Vd\,, \quad \text{for $\mu\in\Mc(\Omega;\V)$ and $\sigma\in\Mc(\Omega;\Vd)$}.
	\]
	
	On $\V$ and $\V^d$ we can consider two orthonormal bases, respectively $(v_k)_{k=1}^n$ and $w_{k,l}= v_k \otimes e_l$, for $(e_l)_{l=1}^d$ canonical basis of $\R^d$.
	Using these bases, a function $\phi$ belongs to $\C(\Om;\V)$ if and only if there exists $\, (a_k)_{k=1}^n\subset\C(\Omega;\R)$ such that $\phi(x)=\sum_{k=1}^n a_k(x) v_k, \forall x\in\Omega$, and analogously for $\psi\in\C(\Om;\Vd)$. In the same way a measure $\mu\in\Mc(\Omega;\V)$ can be written as $\mu=\sum_{k=1}^n b_k v_k$ for a sequence of real valued measures $(b_k)_{k=1}^n\subset\Mc(\Omega;\R)$, and analogously for $\sigma\in\Mc(\Omega;\Vd)$.

	The gradient $\nabla' \phi \in C(\Omega,V^d)$ of a function $\phi \in C^1(\Omega,V)$ is defined componentwise as follows
	\[
	\nabla' \phi = \sum_k  v_k \otimes \nabla \phi_k \quad \text{for } \phi = \sum_k \phi_k v_k\,.
	\]
	Likewise, for $\psi \in C^1(\Omega,V^d)$, the divergence $\div' \psi\in C(\Omega,V)$ is defined as
	\[
	\div' \psi = \sum_k \div (\psi_k) v_k \quad 
	\text{for }\psi = \sum_k v_k \otimes \psi_k\,.
	\]
	In the following, we drop the superscript over $\nabla$ and $\div$ for the sake of notational simplicity.
	A function $\phi\in\C(\Om;\V)$ is 1-Lipschitz if
	\[
	|\phi(y)-\phi(x)|_\V\le |y-x| \,, \quad \forall x,y\in\R^d \,.
	\]
	Since $\Omega$ is supposed to be convex,
	\begin{equation}
	\label{eq:lip_iff_grad1}
		\text{$\phi$ is 1-Lipschitz} \;\iff\; |\nabla \phi|_{L(\R^d;V), \infty}\le 1\,,
	\end{equation}
	where $|\nabla \phi|_{L(\R^d;V)}$ is the operator norm.
	By convention, $|\nabla \phi|_{L(\R^d;V), \infty} = \infty$ if the distributional derivative $\nabla \phi$ does not admit an $L^\infty$ representative; the quantity $|\nabla \phi|_{L(\R^d;V), \infty}$ can also be expressed using duality, similarly to \eqref{eq:Fr_norm_dual} below.
	There is no counterpart of the equivalence \eqref{eq:lip_iff_grad1} involving the Frobenius $|\cdot|_\Vd$ norm, and thus we define instead the set
	\begin{equation}
	\label{eqdef:FrLip}
		\FLip(\Omega,V) \coloneqq 
		\{\phi\in \C(\Omega; V)\,, \  |\nabla\phi|_{V^d,\infty} \leq 1 \}.
	\end{equation}
	We refer to $|\nabla\phi|_{V^d,\infty}$ as the Fr-Lipschitz constant, and note again that 
	it is defined as $\infty$ by convention if the distributional derivative $\nabla \phi$ has no $L^\infty$ representative. 
	By definition of the distributional derivative, and since $L^\infty = (L^1)^*$, one has for any $\phi \in \C(\Omega; V)$
	\begin{equation}
	\label{eq:Fr_norm_dual}
		|\nabla\phi|_{V^d,\infty} \coloneqq \sup_{\psi \in \C^\infty_c(\interior(\Omega);V^d)} \left\{ \int_\Om \<\phi,\, \div \psi\>_V\,, \; |\psi|_{V^d,1}\le 1 \right\}, 
	\end{equation}
	where $\C^\infty_c$ denotes the space of smooth and compactly supported test functions, $\interior(\Omega)$ stands for the interior of the domain and $|\psi|_{V^d,1} := \int_\Omega |\psi|_{V^d}$.
	Notice that $1\text{-}\Lip(\Omega,V) \subset \FLip(\Omega,V) \subset \sqrt d\text{-}\Lip(\Omega,V)$. Furthermore, $\FLip(\Omega,V)$ is closed w.r.t.\ uniform convergence, since \eqref{eq:Fr_norm_dual} is lower-semi-continuous for this topology (as a supremum of continuous maps).

	\section{Unbalanced $L^1$ optimal transport problem for vector valued measures}\label{sec:vectorL1}
	
	In this section we introduce an unbalanced $L^1$ optimal transport problem for vector valued measures.
	In order to ease the presentation of the unbalanced model, we first consider the balanced case, i.e.\ the case of measures $\mu$ and $\nu$ with the same total mass, and present the vector extension of the dual and Beckmann's formulation of the $L^1$ transport, problems \eqref{eq:dual} and \eqref{eq:BP}. These extensions coincide with the ones introduced in \cite{ciosmak2021matrix}.
	We then extend these models to the unbalanced setting. We will consider two different penalizations for the mass imbalance, total variation and $L^2$, respectively in Sections \ref{ssec:vectorL1_unbal} and \ref{ssec:vectorL1_unbalq2}.
	
	For the sake of generality, we stick to considering $\mu,\nu\in\Mc(\Omega;\V)$ instead of $\mu,\nu\in\Mc(\Omega;\Co)$. As in the scalar case in fact, the vectorial extension of the $L^1$ problem does not require the measures to be positive. Of course, considering positive measures will be key for the application to inverse problems, see Section \ref{sec:numerics} and Example \ref{ex:deltas_KRvect}.
	
	\subsection{Balanced case}\label{ssec:vectorL1_bal}
	
	We consider two measures $\mu,\nu\in\Mc(\Omega;\V)$ with the same total mass, $\intO \mu = \intO \nu$, i.e.
	\begin{equation}\label{eq:equalmass}
	\intO \mu_k = \intO \nu_k\,, \quad \forall\, 1 \leq k \leq n\,, \quad \text{for $\mu=\sum_{k=1}^n \mu_k v_k$ and $\nu=\sum_{k=1}^n \nu_k v_k$}\,.
	\end{equation}
	We can define straightforwardly in this setting the dual problem \eqref{eq:dual} as:
	\begin{equation}\label{eq:dualvect}
		\sup_{\phi\in\C(\Om;\V)}\left\{ \int_\Om \langle\phi,\nu-\mu\rangle_\V\,, \; |\nabla \phi|_{\Vd,\infty}\le 1 \right\}.
	\end{equation}
	
	\begin{proposition}\label{prop:dualvect_existence}
		For all $\mu,\nu\in\Mc(\Omega;\V)$, with the same total mass, problem \eqref{eq:dualvect} admits a solution $\phi \in \FLip(\Omega;V)$.
	\end{proposition}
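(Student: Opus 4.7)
The plan is to apply the direct method of the calculus of variations. The two ingredients I would combine are (i) the closedness of $\FLip(\Omega;V)$ under uniform convergence, already recorded just after \eqref{eq:Fr_norm_dual}, and (ii) the fact that, by the balanced mass condition \eqref{eq:equalmass}, the objective is invariant under the addition of a constant $c\in V$: $\int_\Omega \<c,\nu-\mu\>_V = \<c,\int_\Omega(\nu-\mu)\>_V = 0$. This invariance lets me restrict the supremum to zero-mean $\FLip$ functions, which should form a precompact set in $\C(\Omega;V)$.

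Concretely, I would take a maximizing sequence $(\phi_n)\subset\FLip(\Omega;V)$ and normalize each term by subtracting its mean $\bar\phi_n := |\Omega|^{-1}\int_\Omega \phi_n \in V$. Both the objective (by the invariance above) and the Fr-Lipschitz constraint (the gradient is unchanged) are preserved. The norm comparison \eqref{eq:Fr_op_norms} then yields $|\nabla \tilde\phi_n|_{L(\R^d;V),\infty}\leq |\nabla\tilde\phi_n|_{V^d,\infty} \leq 1$ for $\tilde\phi_n := \phi_n - \bar\phi_n$, so by \eqref{eq:lip_iff_grad1} each $\tilde\phi_n$ is $1$-Lipschitz on the convex compact $\Omega$. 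Averaging the resulting pointwise bound $|\tilde\phi_n(x)-\tilde\phi_n(y)|_V\leq |x-y|$ in $y$ together with the zero-mean normalization gives a uniform sup bound $|\tilde\phi_n|_{V,\infty}\leq \diam(\Omega)$, so $(\tilde\phi_n)$ is both uniformly bounded and equicontinuous in $\C(\Omega;V)$.

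Arzelà-Ascoli then produces a subsequence converging uniformly to some $\phi^\star \in \C(\Omega;V)$, which lies in $\FLip(\Omega;V)$ by the aforementioned closedness. Since $\mu,\nu$ are finite measures on the compact $\Omega$, the linear form $\phi \mapsto \int_\Omega \<\phi,\nu-\mu\>_V$ is continuous for uniform convergence, so $\phi^\star$ attains the supremum and is the desired maximizer. No substantial obstacle stands in the way; the only subtlety worth anticipating is that $|\nabla\phi|_{V^d,\infty}$ is not itself a Lipschitz semi-norm, which is exactly why one needs to pass through the norm comparison \eqref{eq:Fr_op_norms} to recover a genuine Lipschitz bound before invoking Arzelà-Ascoli.
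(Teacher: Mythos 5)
Your proposal is correct and follows essentially the same route as the paper's proof: a maximizing sequence made equicontinuous by the gradient constraint, normalized by a constant (licit by the equal-mass condition), Arzel\`a--Ascoli, and then continuity of the linear functional together with the closedness of $\FLip(\Omega;V)$ under uniform convergence noted below \eqref{eq:Fr_norm_dual}. Your explicit steps --- passing through \eqref{eq:Fr_op_norms} and \eqref{eq:lip_iff_grad1} to get $1$-Lipschitz regularity, and the zero-mean normalization yielding the $\diam(\Omega)$ sup bound --- merely spell out what the paper sketches (and are exactly the estimates the paper itself deploys in the later quadratic-penalization theorem).
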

	\begin{proof}
		The proof is similar to the scalar case (see e.g. \cite[Proposition 1.11]{santambrogio2015optimal}). Thanks to the regularity constraint, a maximizing sequence $\phi_n$ is equi-continuous. Furthermore, since the problem is independent of the addition of a constant, we can fix an arbitrary one for each potential $\phi_n$ and derive a uniform bound using the equi-continuity and the boundedness of the domain $\Omega$. By Ascoli-Arzelà one can extract a subsequence converging uniformly on $\Omega$. 
		For the uniform topology, the functional is continuous, and the space $\FLip$ closed as observed below \eqref{eq:Fr_norm_dual}, hence the limit solves \eqref{eq:dualvect}.
	\end{proof}

	The Beckmann formulation \eqref{eq:BP} can also be extended in a straighforward way. For $\sigma\in\Mc(\Omega;\Vd)$ and $\delta\in\Mc(\Omega;\V)$, the continuity equation $\div(\sigma)=\delta$ with zero flux boundary conditions is defined in the following weak sense
	\[
	-\intO \langle \nabla \phi , \sigma \rangle_{\Vd} = \intO \langle \phi,\delta \rangle_{\V}\,, \quad \forall \phi\in\C^1(\Om;\V) \,.
	\]
	As a consequence of this definition, $(\sigma,\mu)$ can be a weak solution only if $\delta$ has zero total mass, i.e.\ $\intO \delta = 0$ according to \eqref{eq:equalmass}. Then we can write the problem:
	\begin{equation}\label{eq:BPvect}
		\inf_{\sigma\in\Mc(\Omega;\V^d)} \left\{ \int_\Om |\sigma|_{\Vd} \,, \; \div (\sigma)=\mu-\nu \right\}.
	\end{equation}
	As in the scalar case, \eqref{eq:dualvect} and \eqref{eq:BPvect} are in duality and the problems are equivalent, as it can be noticed via formal computations. Enforcing the divergence constraint in \eqref{eq:BPvect},
	\[
	\inf_{\sigma} \sup_{\phi} \intO |\sigma|_\Vd + \intO \langle\phi,\div(\sigma)-\mu+\nu\rangle_\V \,,
	\]
	exchanging inf and sup and integrating by parts, we obtain the following dual problem
	\[
	\sup_{\phi} \inf_{\sigma} \intO |\sigma|_\Vd - \intO \langle\nabla \phi,\sigma\rangle_\Vd + \intO \langle\phi,\nu-\mu\rangle_\V \,.
	\]
	The minimization in $\sigma$ provides then
	\[
	\inf_{\sigma} \intO |\sigma|_\Vd - \intO \langle\nabla \phi, \sigma\rangle_\Vd =
	\begin{cases}
		0 &\text{if $|\nabla \phi|_{\Vd,\infty}\le1$} \,, \\
		-\infty &\text{else} \,,
	\end{cases}
	\]
	so that we obtain problem \eqref{eq:dualvect}.
	
	In order to rigorously prove the equivalence of problem \eqref{eq:dualvect} with \eqref{eq:BPvect} we need to justify the swapping of inf and sup.
	We can establish this duality result adapting an approach proposed in \cite[Section 1.6.3]{santambrogio2015optimal} for proving the equivalence between the primal and dual Kantorovich problems (in the scalar setting). As a byproduct of this result, we also derive existence of a solution for the Beckmann formulation \eqref{eq:BPvect}.
	
	\begin{theorem}\label{thm:duality_BP-dual}
		For all $\mu,\nu\in\Mc(\Omega;\V)$, with the same total mass, problems \eqref{eq:dualvect} and \eqref{eq:BPvect} attain the same objective value. Furthermore, the latter admits a solution $\sigma\in\Mc(\Omega;\V^d)$.
	\end{theorem}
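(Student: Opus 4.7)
The plan is to combine an elementary upper bound via integration by parts with a reverse inequality obtained from Fenchel--Rockafellar duality, following the Santambrogio approach cited by the authors.

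For the easy direction $\eqref{eq:dualvect}\le\eqref{eq:BPvect}$, I would take any admissible pair $\phi$ (dual) and $\sigma$ (Beckmann), apply the weak definition of the divergence to obtain $\intO \<\phi, \nu-\mu\>_\V = \intO \<\nabla\phi, \sigma\>_\Vd$, and bound the right-hand side using the pointwise Cauchy--Schwarz inequality $|\<\nabla\phi, \sigma\>_\Vd| \le |\nabla\phi|_\Vd\,|\sigma|_\Vd$ in the Hilbert space $\Vd$, together with the constraint $|\nabla\phi|_{\Vd,\infty}\le 1$ and the defining formula \eqref{eq:totmeasure} of the total variation norm. Some care is needed to justify the integration by parts when $\phi$ is only Fr-Lipschitz rather than $\C^1$, which can be handled by mollification and using that $\FLip(\Om;\V)$ is closed for uniform convergence.

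For the reverse inequality, I would apply Fenchel--Rockafellar duality with $F(\phi) = \intO \<\phi, \mu-\nu\>_\V$ linear continuous on $\C(\Om;\V)$, $G(\psi) = \chi_{\{|\psi|_{\Vd,\infty}\le 1\}}(\psi)$ proper convex lsc on $\C(\Om;\Vd)$, and $A = \nabla$, so that the primal $\inf_\phi F(\phi) + G(A\phi)$ equals minus the value of \eqref{eq:dualvect}. Direct computation gives $F^*(\delta) = \chi_{\{\delta = \mu-\nu\}}$ (well-defined thanks to the equal-mass assumption \eqref{eq:equalmass}, which guarantees that constants lie in the kernel of the primal functional), $A^* = -\div$ via integration by parts with zero-flux boundary conditions, and $G^*(\sigma) = |\sigma|_{\Mc(\Om;\Vd)}$ since the support function of the $|\cdot|_{\Vd,\infty}$-ball is precisely the total variation norm by definition \eqref{eq:totmeasure} (extended to $\Vd$). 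Hence the Fenchel--Rockafellar dual matches minus the value of \eqref{eq:BPvect}. The constraint qualification is immediate at $\phi_0 = 0$: $F$ is finite and continuous there, and $G$ is continuous at $A\phi_0 = 0$ since $0$ lies in the interior of the unit ball. The Fenchel--Rockafellar theorem then delivers both the equality of values and the attainment of the dual, i.e.\ the existence of a minimizer $\sigma \in \Mc(\Om;\Vd)$ for \eqref{eq:BPvect}.

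The main obstacle I anticipate is the functional-analytic setup, since $\nabla$ is not bounded on $\C(\Om;\V)$ and one cannot directly invoke the textbook Fenchel--Rockafellar statement. I would address this either by working on $E = \FLip(\Om;\V)$ (closed in the uniform topology, as noted after \eqref{eq:Fr_norm_dual}) with $A$ treated as a closed operator, or, in the spirit of Santambrogio's scalar proof, by reparametrising the primal so that the analogue of $A$ becomes bounded and the classical theorem applies, before identifying the resulting dual with \eqref{eq:BPvect}. As a backup for existence alone, I would use the direct method: a minimizing sequence for \eqref{eq:BPvect} is bounded in total variation, hence admits a weak-* limit $\sigma$ by Banach--Alaoglu; the divergence constraint is weak-* closed (tested against $\C^1$ functions), and the total variation norm is weak-* lower semicontinuous (as recalled after \eqref{eq:totmeasure}), so $\sigma$ is admissible and optimal.
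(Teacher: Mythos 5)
You have the right architecture, and your weak-duality direction is fine: mollification, combined with the dilation trick on the convex domain, handles the pairing of the merely $L^\infty$ gradient with a possibly singular $\sigma$, via the polar decomposition $\sigma = \theta\, |\sigma|_\Vd$. The direct-method backup is also valid, with two caveats: you must first verify feasibility of \eqref{eq:BPvect} so that the infimum is finite and a minimizing sequence is indeed bounded in total variation --- this holds componentwise, each scalar component of $\mu-\nu$ having zero total mass --- and existence alone does not give equality of the two optimal values, so it cannot substitute for the duality step.

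The genuine gap is the strong-duality step, and you located it yourself: $\nabla$ is not a bounded operator on $\C(\Om;\V)$, so Fenchel--Rockafellar with $A=\nabla$ does not apply; but neither of your escape routes is carried out, and route (a) fails concretely. $\FLip(\Om,V)$ is a convex subset of $\C(\Om;\V)$, not a vector space, so it cannot serve as the domain of a closed linear operator; and if you instead pass to a Sobolev-type space on which $\nabla$ is bounded into $L^\infty(\Om;\Vd)$, the dual variable produced by the theorem lives in $(L^\infty)^*$, which strictly contains $\Mc(\Om;\Vd)$ (it includes finitely additive set functions), so showing that the dual optimizer is a genuine Radon measure --- the very content of the statement --- is precisely what would remain to be proven. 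Route (b) is indeed the paper's actual proof, but it constitutes the entire work rather than a reparametrisation detail: the paper introduces the perturbed value function $\Fc(f) = -\sup\big\{\intO \<\phi,\nu-\mu\>_\V : |\nabla\phi - f|_{\Vd,\infty}\le 1\big\}$ for $f\in\C(\Om;\Vd)$, so that no linear operator appears at all; it proves that $\Fc$ is convex and lower semi-continuous (Arzel\`a--Ascoli applied to the optimal potentials, with the gradient constraint passed to the limit through the dual characterization \eqref{eq:Fr_norm_dual}); it computes $\Fc^*(\sigma)$ by a mollification-plus-dilation density argument reducing to smooth $(f,\phi)$, after which the substitution $\hat f = f-\nabla\phi$ identifies $\Fc^*$ with the Beckmann functional, using that the support function of the $|\cdot|_{\Vd,\infty}$ unit ball is the total variation norm \eqref{eq:totmeasure}; and it concludes via $\Fc(0)=\Fc^{**}(0)$. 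Existence of an optimal $\sigma$ then comes simultaneously with (not separately from) the equality of values, since $0\in\interior(\mathrm{dom}\,\Fc)$ forces $\partial\Fc(0)\neq\emptyset$ and any $\sigma\in\partial\Fc(0)$ satisfies $\Fc(0)+\Fc^*(\sigma)=0$. Your sketch stops exactly where these arguments --- the lower semi-continuity of the value function and the density argument in the conjugate computation --- begin.
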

	\begin{proof}
		Let us introduce the functional $\Fc:\C(\Om;\Vd)\rightarrow \R$,
		\[
		\Fc(f)=- \sup_{\phi\in\C(\Om;\V)} \left\{ \intO \langle\phi,\nu-\mu\rangle_\V, \; |\nabla \phi-f|_{\Vd,\infty}\le1 \right\}.
		\]
		Note that $\nabla\phi - f$ is formally defined as a difference of distributions, and that it admits an $L^\infty$ representative if and only if the distributional derivative $\nabla \phi$ does, since $f$ is continuous.
		Following the same arguments of the proof of Proposition \ref{prop:dualvect_existence}, for every $f$ there exists a solution $\phi$ to the maximization problem (the equi-continuity is provided in this case by observing that $|\nabla \phi|_{\Vd,\infty} \leq 1+|f|_{\Vd,\infty}$) and for $f=0$ we obtain the opposite of problem \eqref{eq:dualvect}. We show that $\Fc$ is convex and lower semi-continuous. 
		
		Regarding convexity, take $f_1,f_2$, the corresponding optimal potentials $\phi_1,\phi_2$, and an interpolation parameter $t \in [0,1]$. The potential $t\phi_1+(1-t)\phi_2$ is admissible for the maximization problem with $t f_1+(1-t)f_2$ since
		\[
		|t\nabla \phi_1+(1-t)\nabla \phi_2-tf_1-(1-t)f_2|_\Vd \le t|\nabla \phi_1-f_1|+(1-t)|\nabla \phi_2-f_2|_\Vd\le1
		\]
		pointwise on $\Omega$. Therefore we obtain:
		\[
		\Fc(tf_1+(1-t)f_2) \le -\intO\langle t\phi_1+(1-t)\phi_2,\nu-\mu\rangle_\V=t\Fc(f_1)+(1-t)\Fc(f_2)\,.
		\]
		Regarding lower semi-continuity, consider a sequence $f_n$ uniformly converging to $f$, and which is therefore uniformly bounded. We extract a subsequence (without relabeling it) such that $\displaystyle \lim_{n\rightarrow \infty} \Fc(f_n)=\liminf_{n\rightarrow \infty}\Fc(f_n)$, still converging to $f$.
		If we take the corresponding sequence of optimal potentials $\phi_n$, then
		\[
		|\nabla \phi_n|_\Vd\le |\nabla \phi_n -f_n|_\Vd+|f_n|_\Vd\le C\,,
		\]
		pointwise on $\Omega$, hence the sequence $(\phi_n)_{n \in \bN}$ is uniformly equi-continuous. Up to adding a normalizing constant we may assume that $\int_\Omega \phi_n = 0$ for all $n\in \bN$, hence the sequence of potentials is uniformly equi-bounded, and therefore converges uniformly (up to extracting a subsequence) towards a potential $\phi$, by the Arzelà-Ascoli theorem. 
		This latter is an admissible potential for $f$.
		Indeed,	for any test function $\psi \in \C^\infty_c(\interior(\Omega),V^d)$ such that $|\psi|_{V^d,1} \leq 1$ we have 
		\begin{equation*}
			\int_\Om \<\phi_n,\div(\psi)\>_V + \<f_n,\psi\>_{V^d} \leq 1 \,,
		\end{equation*}
		which passes to the limit by uniform convergence. 
		This implies $|\nabla \phi-f|_{V^d,\infty} \leq 1$.
		In the end we get:
		\[
		\Fc(f)\le -\intO\langle \phi,\nu-\mu\rangle_\V=\liminf_n \Fc(f_n)\,.
		\]
		
		Let us compute the Legendre transform of $\Fc$:
		\[
		\begin{aligned}
			\Fc^*(\sigma)&=\sup_{f\in\C(\Om;\Vd)} \intO \langle\sigma,f\rangle_\Vd-\Fc(f) \\
			&=\sup_{\substack{f\in\C(\Om;\Vd)\\ \phi\in\C(\Om;\V)}} \left\{ \intO \langle\sigma,f\rangle_\Vd+\intO\langle \phi,\nu-\mu\rangle_\V \,, \; |\nabla \phi-f|_{\Vd,\infty}\le1 \right\},
			\\
			&=\sup_{\substack{f\in\C^\infty(\Om;\Vd)\\ \phi\in\C^\infty(\Om;\V)}} \left\{ \intO \langle\sigma,f\rangle_\Vd+\intO\langle \phi,\nu-\mu\rangle_\V \,, \; |\nabla \phi-f|_{\Vd,\infty}\le1 \right\}.
		\end{aligned}
		\]
		The last equality is justified by the following density argument. 
		Let $f\in\C(\Om;\Vd)$, $\phi\in\C(\Om;\V)$. Choose a non-negative mollifier $\rho \in C^\infty(\R^d)$ supported in the unit ball and with unit integral, and denote $\rho_\delta := \delta^{-d}\rho(\cdot/\delta)$. The convolutions $f_\delta := f \ast \rho_\delta$ and $\phi_\delta := \phi\ast \rho_\delta$, obey $|\nabla \phi_\delta - f_\delta|_{\Vd,\infty} = |\rho_\delta\ast (\nabla \phi - f)|_{\Vd,\infty} \leq 1$ and are defined on $\Omega_\delta := \{x \in \Omega,\, d(x,\partial\Omega) \geq \delta\}$. 
		We assume, up to translating the domain, that $\overline B(0,r) \subset \Omega$ for some $r>0$, and observe that $(1-\delta/r) \Omega \subset \Omega_\delta$ by convexity for all $0 < \delta < r$.
		Denoting $\tilde f_\delta(x) := f_\delta( (1-\delta/r) x)$ and $\tilde\phi_\delta(x) := \phi_\delta( (1-\delta/r) x) /(1-\delta/r)$ we have $\tilde f_\delta \in C^\infty(\Omega; \V^d)$, $\tilde \phi_\delta \in C^\infty(\Omega; V)$, $|\nabla \tilde \phi_\delta-\tilde f_\delta|_{\Vd,\infty} \leq 1$, and $\tilde f_\delta \to f$ and $\tilde \phi_\delta \to \phi$ uniformly on $\Omega$ as $\delta \to 0$, hence the equality. 
		
		Changing variables to $\hat{f}:=f-\nabla \phi \in C^\infty(\Omega;\Vd)$ we obtain, by density of smooth functions in the set of continuous functions
		\[
		\begin{aligned}
			\Fc^*(\sigma)
			&= \sup_{\substack{\hat f \in C^\infty(\Omega; \Vd)\\\phi\in\C^\infty(\Om;\V)}}
			\left\{ \intO \<\sigma, \hat f\> + \intO \langle \sigma,\nabla \phi\rangle_\Vd+\intO\langle\phi,\nu-\mu\rangle_\V,\, | \hat f |_{\Vd,\infty} \leq 1 \right\}
			\\
			&= \sup_{\phi\in\C^\infty(\Om;\V)} \left\{ \intO |\sigma|_\Vd + \intO \langle \sigma,\nabla \phi\rangle_\Vd+\intO\langle\phi,\nu-\mu\rangle_\V \right\} \\
			&=
			\begin{cases}
				\intO |\sigma|_\Vd &\text{if } \;\div(\sigma)=\mu-\nu \\
				\infty &\text{else,}
			\end{cases}
		\end{aligned}
		\]
		which corresponds to the objective functional of problem \eqref{eq:BPvect}.
		
		As $\Fc$ is convex and lower semi-continuous, it holds
		\[
		-\Fc(0)=-\Fc^{**}(0)=-\sup_{\sigma} \left\{\intO \langle\sigma,0\rangle_\Vd-\Fc^*(\sigma)\right\}=\inf_{\sigma} \Fc^*(\sigma) \,,
		\]
		which provides the equivalence between problems \eqref{eq:BPvect} and \eqref{eq:dualvect}.
		The existence of a solution $\sigma$ to the dual problem follows from the fact that $\Fc$ being convex and lower semi-continuous,
		\[
		\sigma\in\partial \Fc(0) \iff 0\in\partial \Fc^*(\sigma) \iff \Fc(0)+\Fc^*(\sigma)=0 \,.
		\]
		Since $0$ is in the interior of the domain of definition of $\Fc$, the subdifferential in this point is not empty.
	\end{proof}
	
	We introduce the functional $W_1^V:\Mc(\Omega;\V)\times\Mc(\Omega;\V)\rightarrow \R_+$,
	\begin{equation}\label{eq:W1V}
		W_1^V(\mu,\nu) \coloneqq \sup_{\phi\in\C(\Om;\V)}\left\{ \int_\Om \langle\phi,\nu-\mu\rangle_\V\,, \; |\nabla \phi|_{\Vd,\infty}\le 1 \right\}
	\end{equation}
	which associates to any two measures $\mu$ and $\nu$ the transport cost, if they have the same total mass, else the value $+\infty$.
	The following proposition is a simple consequence of the above duality result.
	
	\begin{proposition}
		$W_1^V(\cdot,\cdot)$ is jointly convex and lower semi-continuous. Furthermore, it provides a distance on any subset of $\Mc(\Omega;\V)$ of measures with the same total mass.
	\end{proposition}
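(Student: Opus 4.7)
The plan is to read off convexity and lower semi-continuity directly from the representation of $W_1^V$ as a supremum, and then to verify the four axioms of a distance on the constrained subspace. For any fixed admissible potential $\phi \in \FLip(\Omega,V)$, the map $(\mu,\nu) \mapsto \int_\Omega \<\phi, \nu-\mu\>_V$ is linear in $(\mu,\nu)$ and continuous for the weak-$*$ topology. Hence $W_1^V$ is a pointwise supremum of weakly-$*$ continuous affine functionals, and is therefore jointly convex and weakly-$*$ lower semi-continuous. One may notice in passing that when the total masses of $\mu$ and $\nu$ differ, plugging in a constant potential $\phi \equiv v \in V$ with $\<v, \int_\Omega (\nu-\mu)\>_V \neq 0$ and letting the magnitude of $v$ diverge makes the supremum infinite, consistent with the convention made just above the statement.

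To show the distance property on a fixed subset $\{\mu \in \Mc(\Omega;V) : \int_\Omega \mu = m\}$, I would dispose of the three easy axioms first. Non-negativity follows by choosing the admissible potential $\phi \equiv 0$. Symmetry follows from the invariance of the constraint $|\nabla \phi|_{V^d,\infty} \leq 1$ under $\phi \mapsto -\phi$, which swaps the roles of $\mu$ and $\nu$ in the objective. The triangle inequality follows from the decomposition
\[
\int_\Omega \<\phi, \nu-\mu\>_V = \int_\Omega \<\phi, \nu-\rho\>_V + \int_\Omega \<\phi, \rho-\mu\>_V \leq W_1^V(\mu,\rho) + W_1^V(\rho,\nu),
\]
valid for every admissible $\phi$ and every third measure $\rho$ with the same total mass $m$; taking the supremum in $\phi$ yields the inequality.

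The only point needing a bit of care is the separation axiom, namely that $W_1^V(\mu,\nu)=0$ implies $\mu = \nu$. I would argue by contrapositive: assuming $\mu \neq \nu$, the duality between $\Mc(\Omega;V)$ and $\C(\Omega;V)$ furnishes some $\phi \in \C(\Omega;V)$ with $\int_\Omega \<\phi, \nu-\mu\>_V > 0$. Approximating $\phi$ uniformly by a smooth function, for instance by the mollification-and-rescaling procedure already used in the proof of Theorem \ref{thm:duality_BP-dual}, preserves the sign of the integral. Dividing the resulting smooth potential by $\max(1, |\nabla \phi|_{V^d,\infty})$ produces an admissible competitor in the supremum \eqref{eq:W1V} with strictly positive objective value, contradicting $W_1^V(\mu,\nu) = 0$. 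This is the step I expect to be the most delicate, although it is standard; the other three axioms and the convexity/lower semi-continuity statements are essentially formal consequences of the dual representation.
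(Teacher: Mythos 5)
Your proof is correct. Note that the paper offers no written proof: it states the proposition as a ``simple consequence of the above duality result,'' i.e.\ of Theorem~\ref{thm:duality_BP-dual}, and with that theorem in hand the only non-formal axiom, separation, is a one-liner on the \emph{primal} side: if $W_1^V(\mu,\nu)=0$, the Beckmann problem \eqref{eq:BPvect} admits a minimizer $\sigma$ with $\int_\Omega|\sigma|_{\Vd}=0$, hence $\sigma=0$ and $\mu-\nu=\div\sigma=0$. You instead stay entirely on the dual side: your treatment of convexity, lower semi-continuity, non-negativity, symmetry and the triangle inequality via the supremum representation \eqref{eq:W1V} is exactly what the paper intends (it reuses the ``supremum of weakly-$*$ continuous functionals'' argument in the proof of Theorem~\ref{th:exists_unbalanced}), but for separation you invoke the duality $\Mc(\Omega;\V)=\C(\Omega;\V)^*$ to produce a continuous potential pairing positively against $\nu-\mu$, smooth it by the mollification-and-dilation procedure, and normalize its gradient. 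This route is more elementary and self-contained: it never uses primal attainment, and indeed does not need Theorem~\ref{thm:duality_BP-dual} at all (uniform density of smooth functions, e.g.\ by Stone--Weierstrass, would replace the mollification step), at the price of being longer than the one-line primal argument. Two small points to tighten: when dividing by $\max(1,|\nabla\tilde\phi|_{\Vd,\infty})$ you should note this quantity is finite because the mollified potential is smooth on the compact $\Omega$; and since a distance is real-valued, you should also record finiteness on an equal-mass subset --- any admissible $\phi$ is $1$-Lipschitz by \eqref{eq:lip_iff_grad1} since the Frobenius norm dominates the operator norm, and as $\int_\Omega(\nu-\mu)=0$ one may replace $\phi$ by $\phi-\phi(x_0)$ without changing the objective, yielding $W_1^V(\mu,\nu)\le \diam(\Omega)\,\bigl(|\mu|_{\Mc(\Omega;\V)}+|\nu|_{\Mc(\Omega;\V)}\bigr)$.
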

	
	\subsection{Unbalanced case}\label{ssec:vectorL1_unbal}
	
	Requiring the two vector valued measures $\mu$ and $\nu$ to have the same mass may be in some cases too restrictive, as it requires the strong geometric constraint \eqref{eq:equalmass}.
	We introduce for this reason an unbalanced version of the problem.
	For $\mu,\nu\in\Mc(\Omega;\V)$ we define:
	\begin{equation}\label{eq:OT1vect_unbal}
		\Tc_{1,1}^\lambda(\mu,\nu) \coloneqq \inf_{\delta\in\Mc(\Omega;\V)} W_1^V(\mu+\delta,\nu)+\lambda \intO |\delta|_\V\,
	\end{equation}
	for a constant $\lambda>0$ (fixed in this section).
	The penalization of the total variation of $\delta$ determines the deformation price of the measures. Problem \eqref{eq:OT1vect_unbal} aims at finding a balance between transporting mass from $\mu$ to $\nu$ or deforming it and for $\lambda\rightarrow+\infty$ one recovers (formally) a pure transport if this exists.
	Problem \eqref{eq:OT1vect_unbal} is an extension to vector valued measures of the Piccoli-Rossi unbalanced $L^1$ distance \cite{piccoli2014generalized}.
	From the equivalent formulations \eqref{eq:dualvect}-\eqref{eq:BPvect} one deduces that $W_1^V(\mu,\nu)$ only depends on the difference of the two measures. The choice of $W_1^V(\mu+\delta,\nu)$ in the definition of problem \eqref{eq:OT1vect_unbal} is therefore not restrictive.

	\begin{theorem}
	\label{th:exists_unbalanced}
		For all $\mu$,$\nu\in\Mc(\Omega;\V)$, there exists a solution $\delta\in\Mc(\Omega;\V)$ to problem \eqref{eq:OT1vect_unbal}.
	\end{theorem}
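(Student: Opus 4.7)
The plan is a standard application of the direct method of the calculus of variations, relying on the weak-$*$ compactness of balls in $\Mc(\Omega;\V)$ and on the joint lower semi-continuity of $W_1^V$ stated just above the theorem.

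First I would check that the infimum is finite, so that a minimizing sequence of admissible perturbations with finite cost exists. The choice $\delta_0 \coloneqq \nu-\mu$ yields $\mu+\delta_0 = \nu$, hence $W_1^V(\mu+\delta_0,\nu)=0$ by the distance property (or directly from \eqref{eq:W1V}), and the penalty term $\lambda \intO |\delta_0|_\V = \lambda |\nu-\mu|_{\Mc(\Omega;\V)}$ is finite. Therefore $\Tc_{1,1}^\lambda(\mu,\nu) \leq \lambda |\nu-\mu|_{\Mc(\Omega;\V)} < \infty$.

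Next I would take a minimizing sequence $(\delta_n)_{n\in\bN}\subset \Mc(\Omega;\V)$. Since $W_1^V \geq 0$, the penalty term yields the uniform bound
\[
\lambda \intO |\delta_n|_\V \leq \Tc_{1,1}^\lambda(\mu,\nu) + 1
\]
for $n$ large enough, so $(\delta_n)$ is bounded in total variation. Using the weak-$*$ compactness of closed balls of $\Mc(\Omega;\V)$ recalled in Section \ref{sec:notation}, I extract a subsequence (not relabelled) converging weakly-$*$ to some $\delta^\star \in \Mc(\Omega;\V)$. Consequently $\mu+\delta_n \rightharpoonup^* \mu+\delta^\star$ in $\Mc(\Omega;\V)$.

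Finally I would pass to the limit in each term separately. The total variation functional $\delta\mapsto \intO |\delta|_\V$ is weakly-$*$ lower semi-continuous, as noted below \eqref{eq:totmeasure}. The functional $W_1^V(\cdot,\nu)$ is weakly-$*$ lower semi-continuous by the proposition preceding the statement (it is a supremum of weak-$*$ continuous linear forms, with the convention $W_1^V = +\infty$ when total masses differ, which remains lower semi-continuous). Combining both,
\[
W_1^V(\mu+\delta^\star,\nu) + \lambda \intO |\delta^\star|_\V \;\leq\; \liminf_{n\to\infty}\left(W_1^V(\mu+\delta_n,\nu) + \lambda \intO |\delta_n|_\V\right) \;=\; \Tc_{1,1}^\lambda(\mu,\nu),
\]
so $\delta^\star$ is a minimizer. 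The only mildly delicate point is the handling of the $+\infty$ value of $W_1^V$ when masses disagree: working with a minimizing sequence of finite cost forces $\intO\delta_n = \intO(\nu-\mu)$ for all $n$, a linear constraint which passes to the limit by testing against constant functions in $\V^*$, ensuring $\mu+\delta^\star$ and $\nu$ have equal total mass. There is no substantial obstacle beyond this verification; the proof is a textbook compactness-plus-lsc argument made possible by the structural results established in the previous sections.
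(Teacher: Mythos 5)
Your proof is correct and follows essentially the same route as the paper: the direct method with the uniform total variation bound coming from $\lambda>0$ and $W_1^V\geq 0$, weak-$*$ compactness of closed balls of $\Mc(\Omega;\V)$, and weak-$*$ lower semi-continuity of both terms. Your two added verifications (finiteness of the infimum via $\delta_0=\nu-\mu$, and the total-mass constraint passing to the limit) are sound but already implicit in the paper's argument, since the lower semi-continuity of $W_1^V$ with the convention $W_1^V=+\infty$ for unequal masses forces the limit to satisfy the mass constraint automatically.
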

	\begin{proof}
		The functional $W_1^V(\mu+\delta,\nu)$ is well-posed for any $\delta\in\Mc(\Omega;\V)$ such that $\mu+\delta$ has the same total mass as $\nu$, according to \eqref{eq:equalmass}. It is also lower semi-continuous for the weak-* topology, as a supremum of lower semi-continuous functionals \eqref{eq:W1V}, and non-negative.
		Since the total variation norm is lower semi-continuous as well, the existence of a solution follows by the direct method of calculus of variations: consider a minimizing sequence $(\delta_n)_{n \in \bN}$, note that $\int_\Omega |\delta_n|_V$ is uniformly bounded since $\lambda>0$ and $W_1^V(\mu+\delta_n,\nu) \geq 0$, hence there is a weak-* converging subsequence by compactness of the closed balls of $\Mc(\Omega,V)$, whose limit is a minimizer to \eqref{eq:OT1vect_unbal} by lower semi-continuity.
	\end{proof}

	In the rest of this section we show that \eqref{eq:OT1vect_unbal} admits analogous unbalanced versions of \eqref{eq:dualvect} and \eqref{eq:BPvect}. The latter is evident, it simply requires to use the Beckmann formulation of $W_1^V$ to obtain:
	\begin{equation}\label{eq:BPvect_unbal}
		\Tc^\lambda_{1,1}(\mu,\nu) = \min_{\substack{\sigma\in\Mc(\Omega;\V^d)\\\delta\in\Mc(\Omega;\V)}} \left\{ \int_\Om |\sigma|_{\Vd} + \lambda \int_\Om |\delta|_{\V}, \; \div (\sigma)=\mu-\nu+\delta \right\}.
	\end{equation}
	Due to \eqref{eq:BPvect_unbal}, it can be immediately recognized that $\Tc^\lambda_{1,1}(\cdot,\cdot)$ is again a distance on $\Mc(\Omega;\V)$. Using instead \eqref{eq:dualvect} to rewrite the $W_1^V$ distance, we obtain the saddle point problem
	\[
	\inf_{\delta\in\Mc(\Omega;\V)} \sup_{\phi\in\C(\Om;\V)}\left\{ \int_\Om \langle\phi,\nu-\mu-\delta\rangle_\V +\lambda \intO |\delta|_\V\,, \; |\nabla \phi|_{\Vd,\infty}\le 1 \right\}.
	\]
	Exchanging inf and sup and minimizing in $\delta$ provides
	\begin{equation}\label{eq:KRvect}
		\sup_{\phi\in\C(\Om;\V)}\left\{ \int_\Om \langle\phi,\nu-\mu\rangle_\V\,, \; |\nabla \phi|_{\Vd,\infty}\le 1, \, |\phi|_{\V,\infty} \le\lambda \right\},
	\end{equation}
	by the same computation used in the previous section. The dual problem \eqref{eq:KRvect} is a vector extension of the Kantorovich-Rubenstein norm \eqref{eq:KRscal}. The following theorem shows its equivalence with problem \eqref{eq:OT1vect_unbal}.
	
	\begin{theorem}\label{thm:duality_unbal}
		For all $\mu,\nu\in\Mc(\Omega;V)$, there exists a solution $\phi \in \FLip(\Omega;V)$ to problem \eqref{eq:KRvect} and it holds:
		\[
		\Tc^\lambda_{1,1}(\mu,\nu)=\max_{\phi\in\C(\Om;\V)}\left\{ \int_\Om \langle\phi,\nu-\mu\rangle_\V\,, \; |\nabla \phi|_{\Vd,\infty}\le 1, \, |\phi|_{\V,\infty} \le\lambda \right\}.
		\]
	\end{theorem}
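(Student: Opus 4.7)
The proof follows the template of Theorem~\ref{thm:duality_BP-dual}, supplemented by a Legendre transform argument that handles the mass-imbalance penalty. First, existence of an optimizer $\phi \in \FLip(\Om;\V)$ for \eqref{eq:KRvect} follows from Arzelà-Ascoli: any maximizing sequence $(\phi_n)_{n\in\bN}$ is equi-continuous by the Fr-Lipschitz constraint (combined with \eqref{eq:Fr_op_norms}) and equi-bounded by $|\phi_n|_{\V,\infty}\le \lambda$, so a uniformly convergent subsequence exists. Both constraints pass to the limit by closedness of $\FLip(\Om;\V)$ for uniform convergence (noted below \eqref{eq:Fr_norm_dual}) together with continuity of the sup norm, and the objective is continuous for uniform convergence.

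For the equality, I adapt the Legendre transform construction used in Theorem~\ref{thm:duality_BP-dual}. Define $\Fc : \C(\Om;\V) \to \R\cup\{+\infty\}$ by
\[
\Fc(f) = -\sup_{\phi\in\C(\Om;\V)}\left\{ \intO \langle\phi,\nu-\mu\rangle_\V \,,\ |\nabla\phi|_{\Vd,\infty}\le 1,\ |\phi - f|_{\V,\infty}\le \lambda\right\},
\]
so that $-\Fc(0)$ is precisely the right-hand side of the theorem. Convexity follows by interpolating admissible potentials for $f_1, f_2$ as in the proof of Theorem~\ref{thm:duality_BP-dual}. Lower semi-continuity follows by the same Arzelà-Ascoli argument as above: given $f_n \to f$ uniformly, the corresponding optimal potentials $\phi_n$ are equi-continuous from the Fr-Lipschitz bound and equi-bounded since $|\phi_n|_{\V,\infty}\le \lambda + |f_n|_{\V,\infty}$, so one extracts a uniform limit admissible for $f$.

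The Legendre transform of $\Fc$ is then computed via the change of variable $\hat f = f - \phi$, which fully decouples the two constraints. The supremum splits as the sum of $\sup\{ \intO \langle\delta, \hat f\rangle_\V : |\hat f|_{\V,\infty}\le \lambda\} = \lambda |\delta|_{\Mc(\Om;\V)}$ (by definition \eqref{eq:totmeasure} of the total variation measure) and $\sup\{ \intO \langle \phi, \nu - \mu + \delta\rangle_\V : |\nabla\phi|_{\Vd,\infty}\le 1\} = W_1^V(\mu - \delta, \nu)$ (by definition of $W_1^V$), yielding
\[
\Fc^*(\delta) = \lambda |\delta|_{\Mc(\Om;\V)} + W_1^V(\mu - \delta,\nu).
\]
Since $\Fc$ is convex, lower semi-continuous, and proper (finite at $f = 0$ since $\phi = 0$ is admissible, and bounded below by $-(\lambda+|f|_{\V,\infty})\,|\nu-\mu|_{\Mc(\Om;\V)}$), the Fenchel-Moreau theorem gives $\Fc(0) = \Fc^{**}(0) = -\inf_\delta \Fc^*(\delta)$. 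Substituting $\delta \mapsto -\delta$ converts this infimum into the definition \eqref{eq:OT1vect_unbal} of $\Tc^\lambda_{1,1}(\mu,\nu)$ (existence of a minimizer $\delta$ being already provided by Theorem~\ref{th:exists_unbalanced}), establishing the claimed equality.

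The main technical point is the lower semi-continuity of $\Fc$, where one must verify that the added constraint $|\phi-f|_{\V,\infty}\le \lambda$ is stable under joint uniform convergence of $\phi_n$ and $f_n$ — this turns out to be immediate. The delicate mollification and rescaling step of Theorem~\ref{thm:duality_BP-dual} is not needed here: the $W_1^V$ contribution to $\Fc^*$ is identified directly via its dual formulation \eqref{eq:dualvect}, without returning to the Beckmann side.
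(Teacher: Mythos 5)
Your proof is correct, but it follows a genuinely different route from the paper's. The paper proves the duality by invoking the Fenchel--Rockafellar theorem \cite[Theorem 1.12]{brezis2010functional} directly, splitting the objective of \eqref{eq:KRvect} as $\Fc(\phi)=\intO\langle\phi,\mu-\nu\rangle_\V+i_{|\nabla\phi|_{\Vd,\infty}\le1}$ and $\Gc(\phi)=i_{|\phi|_{\V,\infty}\le\lambda}$, checking the qualification condition at $\phi=0$ (where $\Fc$ is finite and $\Gc$ continuous), and reading off $\Fc^*(-\delta)=W_1^V(\mu+\delta,\nu)$ and $\Gc^*(\delta)=\lambda\intO|\delta|_\V$. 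You instead replicate the perturbation-and-biconjugation machinery that the paper reserves for Theorem~\ref{thm:duality_BP-dual}: perturb the sup-norm constraint by $f$, prove convexity and lower semi-continuity of the value function via Arzel\`a--Ascoli, decouple the conjugate through the change of variable $\hat f=f-\phi$, and conclude by Fenchel--Moreau. Both are sound instances of the same convex duality, and your observation that the mollification/rescaling density step of Theorem~\ref{thm:duality_BP-dual} becomes unnecessary is exactly right: since the perturbation here is zeroth-order, $\hat f=f-\phi$ is automatically continuous, unlike $f-\nabla\phi$ in the balanced case. What each approach buys: the paper's argument is shorter, delegates the functional-analytic core to a cited theorem, and yields attainment of the optimal $\delta$ as a byproduct (re-deriving Theorem~\ref{th:exists_unbalanced}), whereas yours is self-contained, needs no external duality theorem, and correctly defers existence of $\delta$ to Theorem~\ref{th:exists_unbalanced}. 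One small point you leave implicit: for non-Lipschitz $f$ the constraint set $\{|\nabla\phi|_{\Vd,\infty}\le1,\,|\phi-f|_{\V,\infty}\le\lambda\}$ may be empty, so $\Fc(f)=+\infty$ is possible; your codomain $\R\cup\{+\infty\}$ accommodates this, and the convexity and lower semi-continuity arguments are vacuous or trivial in that case, so no harm is done --- this matches the level of detail in the paper's own proof of Theorem~\ref{thm:duality_BP-dual}. Your existence argument for the maximizer $\phi$ coincides with the paper's.
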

	\begin{proof}
		The proof of existence of a solution is similar to that of problem \eqref{eq:dualvect} in \cref{prop:dualvect_existence}, the explicit uniform bound on the potential providing in this case directly the equi-boundedness.
		
		For $\phi \in \C(\Omega,V)$, we introduce the functionals 
		\[
		\Fc(\phi)=\int_\Om \langle\phi,\mu-\nu\rangle_\V + i_{|\nabla \phi|_{\Vd,\infty}\le 1}\,, \quad \Gc(\phi)=i_{|\phi|_{\V,\infty}\le\lambda}\,,
		\]
		where we denote by 
		\begin{equation}\label{eq:indicator}
		i_A(\phi) \coloneqq \begin{cases}
			0 & \text{if } \phi \in A,\\
			\infty & \text{otherwise,}
		\end{cases}
		\end{equation}
		the indicator function of a set $A\subset\C(\Omega,V)$. If $A$ is convex and closed (for the strong topology), as it is the case in the two functionals above, then $i_A$ is convex and (weakly-*) lower semi-continuous. Therefore, both $\Fc$ and $\Gc$ are convex and (weakly-*) lower semi-continuous. Furthermore, for $\phi=0$ both $\Fc$ and $\Gc$ are bounded and $\Gc$ is also continuous in this point. Then, from the Fenchel-Rockafellar duality theorem \cite[Theorem 1.12]{brezis2010functional},
		\[
		\max_{\phi\in \C(\Om;V)}-\Fc(\phi)-\Gc(\phi)=\min_{\delta\in\Mc(\Om;\V)} \Fc^*(-\delta)+\Gc^*(\delta)\,,
		\]
		where
		\[
		\Fc^*(-\delta) =\sup_{\phi\in\C(\Om;\V)} \int_{\Om}\langle\phi,\nu-\mu-\delta\rangle_\V-i_{|\nabla \phi|_{\Vd,\infty}\le 1}=W_1^V(\mu+\delta,\nu)\,,
		\]
		by definition of $W_1^V$, and
		\[
		\Gc^*(\delta)=\sup_{\phi\in\C(\Om;\V)} \int_{\Om}\langle\phi,\delta\rangle_\V-i_{|\phi|_{\V,\infty}\le\lambda}=\lambda \intO |\delta|_\V\,,
		\]
		showing the equivalence between problems \eqref{eq:OT1vect_unbal} and \eqref{eq:KRvect}.
	\end{proof}
	
	The following example clarifies the role played by the penalization term and is helpful in understanding how the parameter $\lambda$ can be tuned in practice.
	
	\begin{example}[Exact solution for two delta measures]\label{ex:deltas_KRvect}
		Let us consider the two measures $\mu=M_1 \delta_{x_1},\nu=M_2 \delta_{x_2}$ for $M_1,M_2\in\V$ and two distinct points $x_1,x_2\in \Om$. The Kantorovich-Rubinstein norm \eqref{eq:KRvect} can be written in this case as:
		\begin{equation}\label{eq:Krdeltas}
			|x_1-x_2| \max_{A,B\in \V} \left\{ \langle A, M_1 	\rangle_\V-\langle B, M_2 \rangle_\V \,, |A|_\V\le \gamma, |B|_\V\le \gamma, |A-B|_\V\le 1 \right\},
		\end{equation}
		where $\gamma=\frac{\lambda}{|x_1-x_2|}$.
		Indeed, let $\phi$ be such that $|\nabla \phi|_{V^d} \leq 1$ pointwise on the convex domain $\Omega$, which implies $|\nabla \phi|_{L(\R^d;V)} \leq 1$ since the Frobenius norm is larger than the operator norm. Thus $\phi$ is $1$-Lipschitz, and therefore $A \coloneqq \phi(x_1)$ and $B \coloneqq \phi(x_2)$ satisfy $|A-B|_{V} \leq |x_1-x_2|$.
		Conversely, assume that $|A-B|_{V} \leq |x_1-x_2|$, and define for any $z\in \Omega$
		\begin{align*}
			\phi(z) &= 
			\begin{cases}
				(1-t) A+t B & \text{if } t \in [0,1]\,,\\
				A & \text{if } t \leq 0\,,\\
				B & \text{if } t \geq 1\,,
			\end{cases}&
			\text{where }
			t \coloneqq (z-x_1)\cdot n\,,
		\end{align*}
		and $n \coloneqq (x_2-x_1) / |x_2 - x_1|^2$. Then $\phi$ is piecewise linear hence Lipschitz, and when $0<t<1$ the gradient $\nabla \phi(z) = (B-A) \otimes n$ is a rank one matrix which satisfies $|\nabla \phi(z)|_{V^d} = |\nabla \phi(z)|_{L(\R^d;V)} = |B-A|_\V |n| \leq 1$ as required, since the Frobenius and operator norms coincide on rank one matrices.	
	
		We can rewrite \eqref{eq:Krdeltas} by means of an infsup formulation as
		\[
		\max_{A,B} \min_{C_1,C_2,C_3}\langle A, M_1 \rangle_\V-\langle B, M_2 \rangle_\V + |C_1|_\V-\frac{1}{\gamma} \langle A,C_1 \rangle_\V + |C_2|_\V- \frac{1}{\gamma}\langle B,C_2 \rangle_\V + |C_3|_\V-\langle A-B, C_3 \rangle_\V.
		\]
		Swapping min and max by strong duality\footnote{
		The Fenchel-Rockafellar duality theorem can be applied again for this purpose, as in Theorem \ref{thm:duality_unbal}, for $f(A,B,D)\coloneqq\gamma \<A,M_1\>_\V-\gamma \<B,M_2\>_\V+i_{D=\gamma(A-B)}$ and $g(A,B,D)\coloneqq i_{|A|\leq 1} + i_{|B|\leq 1} + i_{|D|\leq 1}$ (up to a rescaling of $A$ and $B$ by $\gamma$), which are both finite for $(0,0,0)$ and the latter is furthermore continuous at this point.
		} and optimizing in $A$ and $B$ we obtain
		\[
		M_1-\frac{1}{\gamma} C_1-C_3=0\,, \quad -M_2-\frac{1}{\gamma} C_2+C_3=0\,,
		\]
		and the problem reduces to
		\[
		\min_{C\in\V} \gamma|C-M_1|_\V + \gamma |C-M_2|_\V+ |C|_\V\,.
		\]
		This is the generalized Fermat-Torricelli problem: finding the point that minimizes the weighted sum of the distances from the vertices of a triangle (which lies on the plane passing by the three points $0,M_1,M_2$). The analytical solution is available for this problem for example in \cite{uteshev2014analytical}.
		The total cost is then:
		\begin{equation}\label{eq:costdeltas}
		\left\{
		\begin{aligned}
			&\lambda (|M_1|_\V+|M_2|_\V) &&\text{if $\frac{\langle M_1,M_2\rangle_\V}{|M_1|_\V|M_2|_\V}\le \frac{1}{2\gamma^2}-1$} \\
			&|x_1-x_2||M_1|_\V+\lambda |M_1-M_2|_\V  &&\text{if $\frac{\langle M_1,M_2-M_1\rangle_\V}{|M_1|_\V|M_2-M_1|_\V}\ge \frac{1}{2\gamma^2}$} \\
			&|x_1-x_2| |M_2|_\V + \lambda |M_1-M_2|_\V  &&\text{if $\frac{\langle M_2,M_1-M_2\rangle_\V}{|M_2|_\V|M_1-M_2|_\V}\ge \frac{1}{2\gamma^2}$} \\
			&\begin{split}
				|x_1-x_2| \Big( (\gamma^2-\frac12)|M_1-M_2|_\V^2+\frac12(|M_1|_\V^2+|M_2|^2_\V)+\\
				(4\gamma^2-1)^\frac12(|M_1|_\V^2|M_2|_\V^2-\langle M_1,M_2\rangle_\V^2)^\frac12\Big)^\frac12
			\end{split} &&\text{else,}
		\end{aligned}
		\right.
		\end{equation}
		with the three degenerate cases
		\[
		\left\{
		\begin{aligned}
			&\lambda |M_1|_\V  &&\text{if $M_2=0 $} \\
			&\lambda |M_2|_\V  &&\text{if $M_1=0 $} \\
			& \min(2\lambda,|x_1-x_2|) |M|_\V  &&\text{if $M_1=M_2=M$}.
		\end{aligned}
		\right.
		\]
		It would be furthermore possible to explicitly write the solution $C$ and recover the optimal potential $A,B$.
		
		In the scalar case $n=1$, for $\mu=m_1 \delta_{x_1},\nu=m_2 \delta_{x_2}$, with $m_1,m_2\in\R$, the cost reduces to the already known (e.g. \cite[Section 5.1]{chizat2018unbalanced}):
		\begin{equation}\label{eq:costdeltas_n1}
		\left\{
		\begin{aligned}
			&\lambda (|m_1|+|m_2|) &&\text{if $\gamma\le\frac12$ or $\sign(m_1)\neq\sign(m_2)$} \\
			&\lambda (|m_2|-|m_1|)+ |x_1-x_2||m_1|  &&\text{if $m_2\ge m_1\ge0$ or $0\ge m_1 \ge m_2$} \\
			&\lambda (|m_1|-|m_2|)+ |x_1-x_2||m_2|  &&\text{if $m_1\ge m_2\ge 0$ or $0\ge m_2 \ge m_1$. }
		\end{aligned}
		\right.
		\end{equation}
		
		The first condition in \eqref{eq:costdeltas} discriminates between transporting or not the mass, depending on the parameter $\lambda$. If it is satisfied, then the mass is not transported (and the cost corresponds to its destruction at $x_1$ and its reconstruction at $x_2$). If the product $\langle M_1,M_2\rangle_\V$ is non-negative, then there is never transport for $\lambda\le\frac{|x_1-x_2|}{2}$, whereas transport occurs for $\lambda\ge\frac{|x_1-x_2|}{\sqrt{2}}$, with an intermediate regime in between depending on the angle.
		If on the contrary $\langle M_1,M_2\rangle_\V$ is negative, $\lambda$ needs to increase further to ensure transport, depending on the angle between $M_1$ and $M_2$, tending to $+\infty$ in the limiting case they have opposite directions, in which case transport never occurs. 
		This latter phenomenon is the equivalent of the scalar case with a positive dirac and a negative dirac \eqref{eq:costdeltas_n1}.
		This motivates our choice of considering measures valued in a cone $\Co\subseteq\Co^*$, in order to have a meaningful transport problem. Indeed, in this case $\langle M_1,M_2\rangle_\V\ge0$ for any $M_1,M_2\in \Co$.
		
	\end{example}

	The previous example shows that transport may not occur if the data are not signed. The next example shows a more interesting case where this can lead to a somewhat pathological behavior.

	\begin{example}[Insensitivity of $\KR$ to horizontal shifts for signed measures]\label{ex:KRshift}
		Consider for simplicity $\V=\R$, in which case \eqref{eq:KRscal} and \eqref{eq:KRvect} coincide, and two measures $\mu,\nu\in\Mc(\Omega;\V)$. Notice that \eqref{eq:KRscal} only depends on the difference of $\mu$ and $\nu$ and can be therefore formulated between the two positive measures $(\mu-\nu)_+$ and $(\mu-\nu)_-$.
		In terms of optimal transport, this means that positive mass can be transported on negative one and viceversa.
		Consider the two measures $\mu=\sin_{2\pi}(x)\coloneqq \sin(\max\{0,\min\{2\pi,x\}\})$, and $\nu=\mu(\cdot-t)$ for $t\in\R_+, t\ge 2\pi$ (see Figure \ref{fig:KRexample}). Since $\intO \mu=\intO\nu=0$, from the previous example we can deduce that $\KR(\mu,\nu)=\KR((\mu-\nu)_+,(\mu-\nu)_-)=W_1((\mu-\nu)_+,(\mu-\nu)_-)=2\pi$, for a sufficiently big value of the parameter $\lambda$. The transport cost is independent of the parameter $t$, i.e.\ the cost is independent of the shift between the two measures.
		We highlight that this is true since $\mu$ has zero mean. If this is not the case, one recovers sensitivity to the time shift, since an amount of mass needs to be transported between $\mu_+$ and $\nu_+$ (or between $\mu_-$ and $\nu_-$).
	\end{example}

	\begin{figure}
		\centering
		\begin{tikzpicture}[scale=2]
			\usetikzlibrary {arrows.meta} 
			\draw[->] (-0.5,0) -- (5,0);
			\draw (5,-0.05) node[below] {$x$} ;
			\draw[->] (0,-0.5) -- (0,1.5);
			
			\draw (1,-0.05) -- (1,0.05);
			\draw (1,-0.05) node[below] {$\pi$};
			\draw (2,-0.05) -- (2,0.05);
			\draw (2,-0.05) node[below] {$2\pi$};
			
			\draw[scale=1, domain=0:2, smooth, variable=\x, black, line width=.2pt] plot ({\x}, {sin(deg(pi*\x))});
			\draw[scale=1, domain=0:2, smooth, variable=\x, black, line width=.2pt] plot ({\x+2.5}, {sin(deg(pi*\x))});
			\draw[] (1.7,-1) node[right] {\large{$\mu$}};
			\draw[] (4.2,-1) node[right] {\large{$\nu=\mu(\cdot-t)$}};
			
			\draw[scale=1, domain=0:1, smooth, variable=\x, blue, dashed, line width=1pt] plot ({\x}, {sin(deg(pi*\x))});
			\draw[scale=1, domain=0:1, smooth, variable=\x, blue, dashed, line width=1pt] plot ({\x+3.5}, {sin(deg(pi*\x))});
			
			\draw[scale=1, domain=0:1, smooth, variable=\x, red, dashed, line width=1pt] plot ({\x+1}, {sin(deg(pi*\x))});
			\draw[scale=1, domain=0:1, smooth, variable=\x, red, dashed, line width=1pt] plot ({\x+2.5}, {sin(deg(pi*\x))});

			\draw [->,line width=0.5pt,arrows = {-Stealth[scale=1.5]}] (0.5,0.5) to [out=45,in=135] (1.5,0.5);
			\draw [->,line width=0.5pt,arrows = {-Stealth[scale=1.5]}] (4,0.5) to [out=135,in=45] (3,0.5);
			
		
			\draw[blue,dashed,line width=1pt] (4.6,1.4) -- (4.8,1.4);
			\draw[red,dashed,line width=1pt] (4.6,1.15) -- (4.8,1.15);
			\draw (4.8,1.4) node[right] {\small{$(\mu-\nu)_+$}};
			\draw (4.8,1.15) node[right] {\small{$(\mu-\nu)_-$}};
			
		\end{tikzpicture}
		\caption{Representation of the transport problem of Example \ref{ex:KRshift}. The $L^1$ optimal transport between $(\mu-\nu)_+$ and $(\mu-\nu)_-$ is independent of the shift $t\ge2\pi$. The solution is the shift, of length $\pi$, of the leftmost blue bump of mass to the right and of the rightmost blue bump of mass to the left. Notice that, if $\mu$ has not zero mean, this is not possible and some leftover mass needs to travel from the leftmost to the rightmost side (or viceversa).}
		\label{fig:KRexample}
	\end{figure}
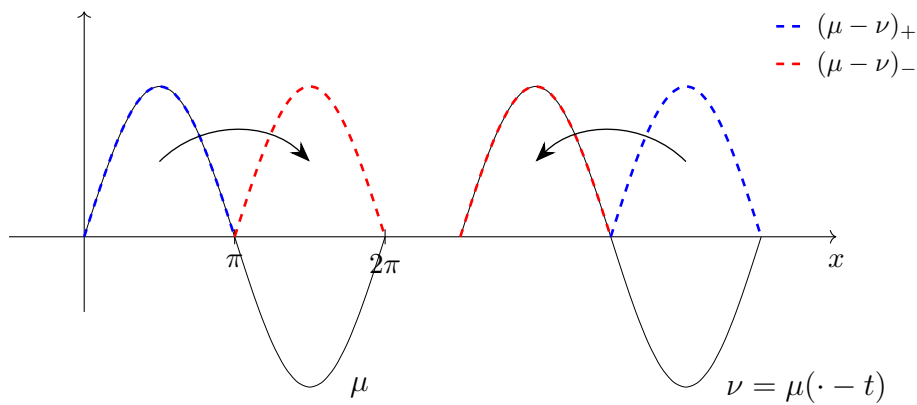

	\subsection{Quadratic penalization}\label{ssec:vectorL1_unbalq2}
	
	As we have seen in the previous section, when using a one-homogeneous total variation norm penalization the parameter $\lambda$ corresponds to half the transport distance up to which mass can be transported. This behavior is independent of the mass considered. In applications it may be more relevant instead to consider a penalization on the transport distance depending also on the mass. This can be achieved by considering a two-homogeneous penalization.
	Another advantage in the specific application of interest for this work, namely seismic imaging, is that the quadratic penalization corresponds to the physical energy of the signal (see Section \ref{sec:numerics}).
	On the contrary, the total variation has no physical interpretation and thus sometimes induces undesirable artifacts, in which some regions of very low physical energy have an excessively strong influence on the transport problem.

	We restrict in this section our attention to measures which are absolutely continuous w.r.t.\ the Lebesgue measure and whose density lies in $L^2(\Omega;\V)$. By a slight abuse of notation, we identify in this case measures with their densities. Since the domain $\Omega$ is compact, $\Mc(\Omega;\V)\cap L^2(\Omega;\V)$ coincides with $L^2(\Omega;\V)$ itself. We define the unbalanced $L^1$ transport cost with quadratic penalization as
	\begin{equation}\label{eq:OT1vect_unbalq2}
	\Tc^\lambda_{1,2}(\mu,\nu) \coloneqq
		\inf_{\delta\in L^2(\Omega;\V)} W_1^V(\mu+\delta,\nu)+\frac{\lambda}{2} \intO |\delta|_\V^2\,,
	\end{equation}
	for any $\mu,\nu\in L^2(\Omega;\V)$.
	The following theorem provides the analogous of the unbalanced formulations \eqref{eq:KRvect} and \eqref{eq:BPvect_unbal} in this case.
	
	\begin{theorem}
		For all $\mu,\nu\in L^2(\Omega;\V)$, problem \eqref{eq:OT1vect_unbalq2} admits a minimizer $\delta\in L^2(\Omega;\V)$ and it holds:
		\begin{align}
			\Tc^\lambda_{1,2}(\mu,\nu) &= \max_{\phi\in\C(\Om;\V)}\left\{ \intO \langle\phi,\nu-\mu\rangle_\V -\frac{1}{2\lambda} \intO |\phi|^2_\V\,, \; |\nabla \phi|_{\Vd,\infty}\le 1 \right\} \label{eq:dualvect_unbalq2} \\
			&=\min_{\substack{\sigma\in\Mc(\Omega;\V^d)\\\delta\in L^2(\Omega;\V)}} \left\{ \int_\Om |\sigma|_{\Vd} + \frac{\lambda}{2} \int_\Om |\delta|^2_{\V}, \; \div (\sigma)=\mu-\nu+\delta \right\} \label{eq:BPvect_unbalq2}.
		\end{align}
		Furthermore, the optimal potential $\phi$ of \eqref{eq:dualvect_unbalq2} is unique.
	\end{theorem}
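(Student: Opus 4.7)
The plan is to mirror the Fenchel--Rockafellar argument of \cref{thm:duality_unbal}, with the one-homogeneous penalty replaced by a two-homogeneous one, whose self-dual quadratic structure transfers cleanly to the dual side. First, I would establish existence of a minimizer $\delta\in L^2(\Omega;V)$ by the direct method: for a minimizing sequence $(\delta_n)$, the bound $\frac{\lambda}{2}\intO|\delta_n|_V^2 \leq \Tc^\lambda_{1,2}(\mu,\nu)+o(1)$, valid because $W_1^V \geq 0$, yields uniform boundedness in $L^2(\Omega;V)$. Hilbert weak compactness then provides a subsequence $\delta_n\rightharpoonup \delta$. Since $\delta\mapsto W_1^V(\mu+\delta,\nu)$ is convex and weakly lower semi-continuous on $L^2(\Omega;V)$ (as a supremum of weakly continuous affine forms, via \eqref{eq:W1V}, using $\C(\Omega;V)\subset L^2(\Omega;V)$), and $\delta\mapsto \intO|\delta|_V^2$ is convex and weakly lsc, the limit $\delta$ is a minimizer.

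For the duality \eqref{eq:dualvect_unbalq2}, I would introduce on $\C(\Omega;V)$ the functionals
\[
\Fc(\phi) \coloneqq \intO \<\phi,\mu-\nu\>_V + i_{|\nabla \phi|_{V^d,\infty}\leq 1}, \qquad \Gc(\phi) \coloneqq \tfrac{1}{2\lambda}\intO |\phi|_V^2,
\]
both convex and lower semi-continuous, with $\Gc$ continuous everywhere. The Fenchel--Rockafellar theorem, applied in the duality between $\C(\Omega;V)$ and $\Mc(\Omega;V)$, then gives $\max_\phi\{-\Fc(\phi)-\Gc(\phi)\} = \min_\delta\{\Fc^*(-\delta)+\Gc^*(\delta)\}$. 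By definition $\Fc^*(-\delta) = W_1^V(\mu+\delta,\nu)$, exactly as in \cref{thm:duality_unbal}. A direct computation, using that the convex conjugate of $\frac{1}{2\lambda}|\cdot|_V^2$ is $\frac{\lambda}{2}|\cdot|_V^2$ together with a density argument via mollification analogous to the one in \cref{thm:duality_BP-dual}, gives $\Gc^*(\delta) = \frac{\lambda}{2}\intO|\delta|_V^2$ if $\delta\in L^2(\Omega;V)$ and $+\infty$ otherwise. This establishes the first equality. Existence of the maximizer $\phi\in\FLip(\Omega;V)$ follows as in \cref{prop:dualvect_existence}, with the quadratic term providing direct equi-boundedness of a maximizing sequence without need of any additive normalization.

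The Beckmann-type identity \eqref{eq:BPvect_unbalq2} is then obtained by substituting the Beckmann representation \eqref{eq:BPvect} of $W_1^V(\mu+\delta,\nu)$ into \eqref{eq:OT1vect_unbalq2} and jointly minimizing in $(\sigma,\delta)$: the constraint $\div \sigma = \mu-\nu+\delta$ simply encodes the divergence condition of $W_1^V$ between $\mu+\delta$ and $\nu$. Finally, uniqueness of $\phi$ follows from strict concavity of the objective in \eqref{eq:dualvect_unbalq2}: it is the sum of a linear form and the strictly concave functional $-\frac{1}{2\lambda}\intO |\phi|_V^2$, maximized over the convex set $\FLip(\Omega;V)$, so two distinct maximizers $\phi_1 \ne \phi_2$ would, by strict concavity applied to their midpoint and continuity of $\phi_1 - \phi_2$, contradict optimality. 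The step I expect to require the most care is the identification of $\Gc^*$ in the non-reflexive duality between $\C(\Omega;V)$ and $\Mc(\Omega;V)$, and in particular verifying that the dual minimizer $\delta$ produced by Fenchel--Rockafellar automatically belongs to $L^2(\Omega;V)$; once this is settled, the remainder is a transparent adaptation of the one-homogeneous case.
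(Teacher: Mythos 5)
Your proposal is correct and follows essentially the same route as the paper: the direct method with weak $L^2$ compactness and lower semi-continuity of $W_1^V$ for the minimizer $\delta$, Fenchel--Rockafellar with the same splitting into $\Fc$ and $\Gc$ (including the key observation that $\Gc^*(\delta)=+\infty$ whenever $\delta\notin L^2(\Omega;\V)$, which forces the dual minimizer into $L^2$), direct substitution of the Beckmann representation for \eqref{eq:BPvect_unbalq2}, and strict concavity of the quadratic term for uniqueness of $\phi$. The only minor deviation is in the existence proof for the dual maximizer, where you obtain equi-boundedness of the maximizing sequence directly from coercivity of the quadratic penalty combined with the equi-Lipschitz constraint, whereas the paper splits off the mean of $\phi$ and identifies the optimal average $\lambda\int_\Omega(\nu-\mu)/|\Omega|$ explicitly; both variants are valid inputs to the same Arzel\`a--Ascoli argument.
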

	\begin{proof}
		Let $\delta_n$ be a minimizing sequence for \eqref{eq:OT1vect_unbalq2}. Since $W_1^V(\mu+\delta,\nu)\ge 0$, we can assume that the sequence is bounded in $L^2(\Omega;\V)$. The $L^2(\Omega;\V)$ space is not closed with respect to the weak-* topology. We can consider however in this case the weak $L^2$ topology to extract a converging subsequence. From definition \eqref{eq:dualvect}, since $\C(\Omega;\V)\subset L^2(\Omega;\V)$, $\Omega$ being compact, the distance $W_1^V$ is clearly lower semi-continuous with respect to this topology. We obtain therefore existence of an optimal $\delta$ for \eqref{eq:OT1vect_unbalq2}. The equivalence with \eqref{eq:BPvect_unbalq2} is trivial.
		
		Let us consider existence for problem \eqref{eq:dualvect_unbalq2}.
		In order to derive the uniform boundedness for a sequence of maximizers, we can use the uniform Lipschitz regularity. The problem is not independent of the addition of a constant to the potential but we can determine a uniform bound for it. Problem \eqref{eq:dualvect_unbalq2} can be rewritten as
		\[
		\sup_{\substack{a\in\Rd,\hat{\phi}\in\C(\Om;\V)\\ \intO \hat{\phi}=0}}\left\{ \intO \langle\hat{\phi},\nu-\mu\rangle_\V+ a \int_{\Om}(\nu-\mu)-\frac{1}{2\lambda} \intO |\hat{\phi}|^2_\V-\frac{a^2}{2\lambda}|\Omega|\,, \; |\nabla \hat{\phi}|_{\Vd,\infty}\le 1 \right\},
		\]
		by making explicit the average $a$ of the potential $\phi$, and introducing the zero-mean difference $\hat\phi \coloneqq \phi - a/|\Omega|$. Consider then a maximizing sequence $(a_n,\hat{\phi}_n)$. The average $a_n$ clearly converges to the optimal value $\lambda \int_{\Om}(\nu-\mu) / |\Omega|$. 
		The sequence of zero mean uniformly Lipschitz potential $\hat{\phi}_n$ is now also uniformly bounded, $|\hat{\phi}_n|\le\diam(\Omega) \coloneqq \max \{|x-y|,\, x,y \in \Omega\}$.
		Indeed, $\hat{\phi}_n$ is Lipschitz because the Frobenius norm is larger than the operator norm. 
		Second, let $v \in V$ with $|v|_V = 1$ be such that $|\hat{\phi}_n|_\infty = |\<\hat{\phi}_n,v\>_V|_\infty$; then $x\mapsto \<\hat{\phi}_n(x),v\>_V$ is $1$-Lipschitz, zero-mean and scalar valued, hence bounded by $\diam(\Omega)$ as announced. Hence, we can apply Ascoli-Arzelà as in the proof of Proposition \eqref{prop:dualvect_existence} to deduce convergence to an optimal $\hat{\phi}$. Finally, the optimal potential in \eqref{eq:dualvect_unbalq2} is $\phi=\lambda \int_{\Om}(\nu-\mu)/|\Omega|+\hat{\phi}$. Uniqueness derives from the strict convexity of the objective functional.
		
		To conclude, the proof of the duality result is analogous to the proof of Theorem \ref{thm:duality_unbal}. We introduce the two convex and lower semi-continuous functionals
		\[
		\Fc(\phi)=\int_\Om \langle\phi,\mu-\nu\rangle_\V + i_{|\nabla \phi|_{\Vd,\infty}\le 1}\,, \quad \Gc(\phi)=\frac{1}{2\lambda}\intO |\phi|_\V^2\,,
		\]
		which satisfies the hypotheses of the Fenchel-Rockafellar duality theorem \cite[Theorem 1.12]{brezis2010functional} (consider again $\phi=0$). It holds again
		\[
		\Fc^*(-\delta)=W_1^V(\mu+\delta,\nu)\,,
		\]
		whereas for the second term
		\[
		\Gc^*(-\delta)=\sup_{\phi\in\C(\Omega;\V)} -\intO \langle\phi,\delta\rangle_\V-\frac{1}{2\lambda}\intO |\phi|_\V^2=
		\begin{cases}
			\frac{\lambda}{2}\intO |\delta|_\V^2 &\text{if $\delta\in L^2(\Omega;\V)$,} \\
			+\infty &\text{else,}
		\end{cases}
		\]
		since $\intO \langle\phi,\delta\rangle_\V$ is not a bounded functional on $L^2(\Omega;\V)$ if $\delta\notin L^2(\Omega;\V)$. The equivalence between \eqref{eq:OT1vect_unbalq2} and \eqref{eq:dualvect_unbalq2} follows.
	\end{proof}
	
	\begin{remark}
		Differently from problems \eqref{eq:OT1vect_unbalq2} and \eqref{eq:BPvect_unbalq2} which require $\mu,\nu \in L^2(\Omega;\V)$, the dual formulation \eqref{eq:dualvect_unbalq2} is well-defined for every $\mu,\nu\in\Mc(\Omega;\V)$.
	\end{remark}

	We can again shed light on the role played by the parameter $\lambda$, in this case with quadratic penalization, by considering an exact solution.
	  
	\begin{example}[Exact solution for two delta measures]\label{ex:deltas_p1q2}
		Consider the case $d=1,n=1$ and two measures of the form $\mu=m_1 \delta_{x_1},\nu=m_2 \delta_{x_2}$ for $m_1,m_2\in \R_+$.
		Following Example \ref{ex:deltas_KRvect}, we want to rewrite problem \eqref{eq:dualvect_unbalq2} in the variables $a=\phi(x_1)$ and $b=-\phi(x_2)$. Clearly, the $L^2$ norm squared of $\phi$ is minimized, under the gradient constraint, by a piecewise linear function which on its support has slope $\pm 1$ almost everywhere, and may only have a local maximum at $x_1$ and a local minimum at $x_2$.	
		Suppose that $m_1>m_2$. We need to distinguish between the cases
		$\lambda\ge \frac{|x_1-x_2|^2}{m_1-m_2}$ and $\lambda\le \frac{|x_1-x_2|^2}{m_1-m_2}$. In the first case the first mass dominates on the second one and we can therefore write the problem as:
		\[
		\sup_{a} \left\{ a m_1 + b m_2  -\frac{1}{3\lambda} a^3 \,, \; a\ge0\,, b=-a+|x_1-x_2| \right\},
		\]
		where $\phi(x) = \max \{0,a-|x-x_1|\}$ is the hat function centered at $x_1$, and thus $\frac{1}{3}a^3=\frac{1}{2}\int\phi^2$. In the second case instead the problem writes:
		\[
		\sup_{a,b} \left\{ a m_1 + b m_2  -\frac{1}{3\lambda} \left(a^3+b^3\right) \,, \; a+b\le |x_1-x_2|, a\ge0,b\ge0 \right\},
		\]
		where now $\phi(x) = \max \{0,a-|x-x_1|\} - \max\{0, b-|x-x_2|\}$ is the difference of two hat functions centered at $x_1$ and $x_2$ and whose supports have disjoint interiors, and thus $\frac{1}{2}\int\phi^2=\frac{1}{3}(a^3+b^3)$.
		Easy computations finally show that the solution has the following form:
		\begin{equation}\label{eq:costdeltas_2}
		\left\{
		\begin{aligned}
			&a=\sqrt{\lambda (m_1-m_2)}, b=-a+|x_1-x_2| &&\text{if $\lambda\ge \frac{|x_1-x_2|^2}{m_1-m_2}$,} \\
			&a=\frac{|x_1-x_2|}{2}+\frac{\lambda(m_1-m_2)}{2|x_1-x_2|}, b=\frac{|x_1-x_2|}{2}-\frac{\lambda(m_1-m_2)}{2|x_1-x_2|} &&\text{if $\lambda\le \frac{|x_1-x_2|^2}{m_1-m_2}$,}\\
			&a=\sqrt{\lambda m_1}, b=\sqrt{\lambda m_2} &&\text{if $\lambda\le\frac{|x_1-x_2|^2}{(\sqrt{m_1}+\sqrt{m_2})^2}$.}
		\end{aligned}
		\right.
		\end{equation}
		
		As in Example \ref{ex:deltas_KRvect}, $\lambda$ represents a threshold for transporting the mass. If the third condition is satisfied in \eqref{eq:costdeltas_2}, mass is not transported. In this case however $\lambda$ represents the distance at which a unit of mass is transported. By adding a quadratic penalization term in \eqref{eq:OT1vect_unbalq2}, particles are transported according to their mass, allowing smaller masses to have a lower influence on the transport problem.
	\end{example}
	
	\begin{remark}[The $H^{-1}$ norm]
		From \eqref{eq:BPvect_unbalq2} arises the natural idea of endowing also $\Mc(\Omega;\Vd)$ with the $L^2$ structure and write
		\begin{equation}\label{eq:H-1_primal}
			\inf_{\substack{\sigma\in L^2(\Omega;\V^d)\\\delta\in L^2(\Omega;\V)}} \left\{ \int_\Om |\sigma|^2_{\Vd} + \frac{\lambda}{2} \int_\Om |\delta|^2_{\V}, \; \div (\sigma)=\mu-\nu+\delta \right\}.
		\end{equation}
		In this case the problem coincides with the $H^{-1}$ norm of $\mu-\nu$. Indeed, the dual problem to \eqref{eq:H-1_primal} is given by
		\begin{equation}\label{eq:H-1}
			\sup_{\phi} \intO \langle \phi, \nu-\mu \rangle_{\V}-\frac{1}{2\lambda}\intO |\phi|_\V^2-\frac12 \intO |\nabla\phi|_\Vd^2
		\end{equation}
		which corresponds to the dual norm
		\[
		\sup_{|\phi|_{H^1}\le1} \intO \langle \phi,\nu-\mu\rangle_{\V} \,, \quad
		|\phi|^2_{H^1} \coloneqq \frac{1}{\lambda}\intO |\phi|_\V^2+ \intO |\nabla\phi|_\Vd^2\,.
		\]
		With this choice the optimal transport structure is lost.
		The solution $\phi$ is provided by the following elliptic equation:
		\begin{equation}\label{eq:H-1_oc}
			\Delta \phi +\frac{1}{\lambda} \phi= \nu-\mu\,.
		\end{equation}
		By writing $\phi,\nu$ and $\mu$ in the common basis of $\V$, \eqref{eq:H-1_oc} is a set of $n$ scalar equations that can be solved separately and \eqref{eq:H-1_primal} is decoupled in the different components of $\mu,\nu$.
		Problem \eqref{eq:H-1_primal} is not interesting for the purpose of this paper but it will be considered for comparison. We denote $\Tc_{2,2}^\lambda(\mu,\nu)$ the functional that associates the two measures $\mu,\nu\in L^2(\Omega;\V)$ with the optimal value in \eqref{eq:H-1}.
	\end{remark}

	\section{Discretization and numerical solution}\label{sec:discrete_model}
	
	In this section, we consider a finite difference discretization of our model and propose the use an SDMM algorithm, a primal-dual proximal splitting optimization approach, for the solution of the discrete problem.
	
	\subsection{Finite difference discretization}
	
	We assume in this section a rectangular domain of the form $\Om=\Pi_{i=1}^d [a_i,b_i]^d$, where $a_i,b_i\in \R$ and $a_i < b_i$ for all $1 \leq i \leq d$, and consider a forward finite difference discretization of our vector valued unbalanced optimal transport problem.
	We restrict to this setting for simplicity, but a straightforward extension to more general polyhedral domains and unstructured partitioning may be obtained by relying on finite volume discretizations.
	
	For simplicity, let $d\coloneqq2$ in the rest of this section (the extension to arbitrary dimension is straightforward). The domain $\Om=[a_x,b_x]\times[a_y,b_y]$ is discretized using a uniform Cartesian grid $\Gc$, with mesh sizes $h_x=\frac{b_x-a_x}{N_x-1}$ and $h_y=\frac{b_y-a_y}{N_y-1}$, where $N_x$ and $N_y$ are the number of grid points, respectively along the first and second direction. A point on the grid is denoted by $(x_i,y_j)=(a_x+(i-1)h_x,\, a_y+(j-1)h_y)\in\Gc$, for $1\le i \le N_x$, $1\le j \le N_y$.
	The set of discrete functions defined on $\Gc$, with values respectively in $\V$ and $\Vd$, are denoted by $\Xc\coloneqq\mathfrak{F}(\Gc;\V)$ and $\Xc^d\coloneqq\mathfrak{F}(\Gc;\Vd)$ (we will use the compact notation $\Xc$ and $\Xc^d$ in subscripts for simplicity). We use the convention $\phi_{i,j}\in\V$ to denote the $(i,j)$-th component for every discrete function $\phi\in \mathfrak{F}(\Gc;\V)$ and similarly for functions $\psi\in \mathfrak{F}(\Gc;\Vd)$. These spaces are endowed with the scalar products
	\[
	\begin{aligned}
		&\langle u, v \rangle_{\Xc} = h_x h_y \sum_{i=1}^{N_x}\sum_{j=1}^{N_y} \langle u_{i,j},v_{i,j}\rangle_{\V}, &&\forall u,v \in \mathfrak{F}(\Gc;\V)\,, \\ 
		&\langle w, z \rangle_{\Xc^d} = h_x h_y \sum_{i=1}^{N_x}\sum_{j=1}^{N_y} \langle w_{i,j},z_{i,j}\rangle_{\Vd}, &&\forall w,z \in \mathfrak{F}(\Gc;\Vd)\,, \\ 
	\end{aligned}
	\]
	and the respective norms $|\cdot|_\Xc, |\cdot|_{\Xc^d}$. 
	
	We also introduce the discrete $l^p$ and $l^\infty$ norms on the discrete spaces $\mathfrak{F}(\Gc;\V)$ and $\mathfrak{F}(\Gc;\Vd)$ as
	\[
	\begin{aligned}
		&|u|^p_{\Xc,p} = h_x h_y   \sum_{i=1}^{N_x}\sum_{j=1}^{N_y} |u_{i,j}|^p_{\V} \,,
		\quad
		&&|w|^p_{\Xc^d,p} = h_x h_y \sum_{i=1}^{N_x}\sum_{j=1}^{N_y} |w_{i,j}|^p_{\Vd}, \\[0.4em]
		&|u|_{\Xc,\infty} = \max_{i,j} |u_{i,j}|_\V\,, \quad &&|w|_{\Xc^d,\infty} = \max_{i,j} |w_{i,j}|_\Vd\,,
	\end{aligned}
	\]
	for $u\in \mathfrak{F}(\Gc;\V), w\in \mathfrak{F}(\Gc;\Vd)$.
	
	The discrete gradient operator $\nabla_{\Xc}: \mathfrak{F}(\Gc;\V) \longrightarrow \mathfrak{F}(\Gc;\Vd)$ (we recall that $d=2$) is defined as
	\[
	\begin{aligned}
		(\nabla_{\Xc}(u))_{i,j} &\coloneqq \left( \frac{u_{i+1,j}-u_{i,j}}{h_x} \,, \frac{u_{i,j+1}-u_{i,j}}{h_y} \right) \quad &&1\le i\le N_x\,, 1\le j\le N_y\,, \\
	\end{aligned}
	\]
	with the convention $u_{N_x+1,j}=u_{N_x,j}$ and $u_{i,N_y+1}=u_{i,N_y}$.
	The discrete divergence operator $\div_{\Xc}:\mathfrak{F}(\Gc;\Vd) \longrightarrow \mathfrak{F}(\Gc;\V)$ is instead defined by duality as
	\[
	\langle \div_{\Xc}(w),u\rangle_{\Xc}=-\langle w,\nabla_{\Xc}(u)\rangle_{\Xc^d} \,.
	\]
	It automatically inherits the discrete Neumann boundary conditions.
	
	The discrete setting allows us to uniformize the notation for the discrete transport problem.
	Given two discrete measures $\mu,\nu\in \mathfrak{F}(\Gc;\V)$, and two natural numbers $p,q\ge 1$, the discrete (generalized) Beckmann's minimal flow problem is
	\begin{equation}\label{eq:BPvect_pq_h}
		(\mathcal{T}_{\Xc})_{p,q}^\lambda (\mu,\nu) \coloneqq\inf_{\substack{\sigma\in \mathfrak{F}(\Gc;\Vd)\\\delta\in \mathfrak{F}(\Gc;\V)}} \Bigg\{ \frac{1}{p} |\sigma|^p_{\Xc^d,p} + \frac{\lambda}{q} |\delta|^q_{\Xc,q} \,, \;\div_{\Xc} (\sigma)=\mu-\nu+\delta \Bigg\}.
	\end{equation}
	The following proposition justifies the existence of a solution and gives the equivalent discrete dual formulation of \eqref{eq:BPvect_pq_h}.
	
	\begin{proposition}\label{prop:duality_discrete}
		For all $\mu,\nu\in \mathfrak{F}(\Gc;\V)$, the following duality formulas hold:
		\begin{gather}
			\label{eq:KRvect_h}
			(\mathcal{T}_{\Xc})_{1,1}^\lambda(\mu,\nu) = \max_{\phi\in \mathfrak{F}(\Gc;\V)}\left\{ \langle\phi,\nu-\mu\rangle_\Xc\,, \; |\nabla_{\Xc} (\phi)|_{\Xc^d,\infty}\le 1, \, |\phi|_{\Xc,\infty} \le\lambda \right\}, \\
			\label{eq:dualvect_unbalq2_h}
			(\mathcal{T}_{\Xc})_{1,2}^\lambda(\mu,\nu) = \max_{\phi\in \mathfrak{F}(\Gc;\V)}\left\{ \langle\phi,\nu-\mu\rangle_\Xc-\frac{1}{2\lambda} |\phi|^2_{\Xc,2}\,, \; |\nabla_{\Xc} (\phi)|_{\Xc^d,\infty}\le 1 \right\}, \\
			\label{eq:H-1_h}
			(\mathcal{T}_{\Xc})_{2,2}^\lambda(\mu,\nu) = \max_{\phi\in \mathfrak{F}(\Gc;\V)}\left\{ \langle\phi,\nu-\mu\rangle_\Xc-\frac{1}{2\lambda} |\phi|^2_{\Xc,2}-\frac12 |\nabla_{\Xc}(\phi)|^2_{\Xc^d,2} \right\}.
		\end{gather}
		Furthermore, the primal problem \eqref{eq:BPvect_pq_h} admits a solution as well.
	\end{proposition}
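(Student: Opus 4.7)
The plan is to adapt the Fenchel-Rockafellar duality approach of \cref{thm:duality_unbal} and of the corresponding theorem in \cref{ssec:vectorL1_unbalq2} to this finite-dimensional discrete setting. Because $\Xc$ and $\Xc^d$ are finite-dimensional, the argument becomes considerably lighter: no density or weak-$*$ compactness is required, and the constraint qualification reduces to checking finiteness at a single point.

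Existence of a primal minimizer for \eqref{eq:BPvect_pq_h} is the first and cheapest step. On the feasible set one has $\delta = \div_\Xc(\sigma) - (\mu-\nu)$, so the primal objective reduces to a proper convex function of $\sigma$ alone which is coercive for $p\ge 1$. Hence any minimizing sequence is bounded, admits a convergent subsequence by finite-dimensionality, and its limit is optimal by continuity of the objective; the corresponding $\delta$ is read off the constraint.

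For the three duality formulas I would apply Lagrangian duality directly, and this is the heart of the proof. Introducing a multiplier $\phi\in\Xc$ for the affine equality constraint and using the discrete integration-by-parts identity $\<\phi,\div_\Xc(\sigma)\>_\Xc=-\<\nabla_\Xc(\phi),\sigma\>_{\Xc^d}$, the Lagrangian splits additively into a $\sigma$-part and a $\delta$-part. Minimizing each pointwise on the grid, via Fenchel-Young applied componentwise to $t\mapsto t^p/p$ and $t\mapsto t^q/q$, gives: for $p=1$ (resp.\ $q=1$) the indicator $i_{|\nabla_\Xc \phi|_{\Xc^d,\infty}\le 1}$ (resp.\ $i_{|\phi|_{\Xc,\infty}\le\lambda}$), and for $p=2$ (resp.\ $q=2$) the quadratic $-\tfrac12|\nabla_\Xc \phi|^2_{\Xc^d,2}$ (resp.\ $-\tfrac{1}{2\lambda}|\phi|^2_{\Xc,2}$). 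Added to $\<\phi,\nu-\mu\>_\Xc$, these produce exactly \eqref{eq:KRvect_h}--\eqref{eq:H-1_h}. Strong duality, i.e.\ the exchange of inf and sup, follows from standard finite-dimensional convex duality since the primal is convex, finite-valued, and has affine equality constraints that are trivially feasible (take $\sigma=0$, $\delta=\nu-\mu$). Alternatively one may invoke Fenchel-Rockafellar exactly as in the proof of \cref{thm:duality_unbal} with $\Fc(\phi)=\<\phi,\mu-\nu\>_\Xc+R_p(\nabla_\Xc\phi)$ and $\Gc(\phi)=S_q(\phi)$ (where $R_p,S_q$ are the indicator or quadratic pieces corresponding to $p,q$): both functionals are convex and lower semi-continuous, and at $\phi=0$ they are finite while the non-gradient piece $\Gc$ is continuous, so the qualification holds.

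No step constitutes a genuine obstacle; the infinite-dimensional subtleties of \cref{sec:vectorL1} (density of smooth test functions, Arzelà-Ascoli, lower semi-continuity for the weak-$*$ topology) are all absent. The only real effort is the uniform bookkeeping across the three cases $(p,q)\in\{(1,1),(1,2),(2,2)\}$, which I would either handle in parallel via the templates $R_p$, $S_q$ above, or case by case if that reads more transparently.
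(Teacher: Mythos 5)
Your proposal is correct, and it fills in honestly what the paper leaves implicit: the paper's own proof of \cref{prop:duality_discrete} is a one-line deferral (``straightforward adaptation of the proof strategies of Section \ref{sec:vectorL1}''), which implicitly means the two-stage route of \cref{thm:duality_BP-dual,thm:duality_unbal} -- a perturbation/biconjugation argument identifying the balanced Beckmann and dual problems, followed by Fenchel--Rockafellar for the unbalanced penalty, with primal existence extracted from $\sigma\in\partial\Fc(0)$ and dual existence from (discrete) compactness. You instead run a single-shot Lagrangian dualization of \eqref{eq:BPvect_pq_h}, introducing one multiplier $\phi$ for the divergence constraint and conjugating the $\sigma$- and $\delta$-parts componentwise via Fenchel--Young, which treats all three cases $(p,q)\in\{(1,1),(1,2),(2,2)\}$ uniformly; and you obtain primal existence upfront by eliminating $\delta$ and invoking coercivity plus Weierstrass. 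Both are the same convex-duality core, but your organization buys something real: no two-stage decomposition, no subdifferential argument for primal existence, and the constraint qualification reduces to the trivially checkable feasibility $(\sigma,\delta)=(0,\nu-\mu)$ with full-domain (hence relative-interior) membership, which in finite dimensions yields strong duality \emph{with dual attainment}. The only point worth making explicit in a write-up is that attainment of the maximum over $\phi$ (the proposition says ``max,'' and the paper credits it to discrete compactness): it is either contained in the standard strong-duality theorem you invoke, or follows directly from compactness of $\{|\phi|_{\Xc,\infty}\le\lambda\}$ when $q=1$ and from strong concavity of the term $-\tfrac{1}{2\lambda}|\phi|^2_{\Xc,2}$ when $q=2$. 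Beyond this, only sign bookkeeping ($\langle\phi,\mu-\nu\rangle_\Xc$ versus $\langle\phi,\nu-\mu\rangle_\Xc$, resolved by $\phi\mapsto-\phi$) separates your computation from the stated formulas \eqref{eq:KRvect_h}--\eqref{eq:H-1_h}.
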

	\begin{proof}
		The proof of duality can be achieved as a straightforward adaptation of the proof strategies of Section \ref{sec:vectorL1} to the discrete setting, providing at the same time existence for \eqref{eq:BPvect_pq_h}. Existence for problems \eqref{eq:KRvect_h}-\eqref{eq:dualvect_unbalq2_h}-\eqref{eq:H-1_h} follows from discrete compactness arguments.
	\end{proof}
	
	\begin{remark}
	The discretization of the $\KR$ norm proposed in the original works \cite{metivier2016measuring,metivier2016optimal}, on its application to seismic imaging, corresponds to a staggered grid finite difference scheme. 
	As a result, the horizontal and vertical components of the gradient are stored in different locations of the grid, which leads these authors to use an axis-biased discretization of the gradient norm, and thus to implement a Lipschitz regularity constraint w.r.t.\ the Manhattan distance.
	Our discretization corresponds instead to a forward finite difference scheme: although being in general less accurate, this is consistent with the correct isotropic gradient norm $|\cdot|_{\Vd}$, helping to eliminate substantial artifacts in the optimal transport solution.
	\end{remark}
	
	\subsection{SDMM algorithm}
	
	We present now an optimization strategy for solving problem \eqref{eq:BPvect_pq_h}.
	We focus here on the two cases $(p,q)=(1,1)$ and $(p,q)=(1,2)$ since the case $(p,q)=(2,2)$ can be tackled in an easier way.
	Indeed, the latter can be addressed by solving directly the system of optimality conditions, namely the discrete version of the Poisson equation \eqref{eq:H-1_oc}:
	\begin{equation}\label{eq:H-1_h_oc}
		 \Delta_{\Xc}(\phi)+\frac{1}{\lambda}\phi=\nu-\mu\,, \quad \text{where }\, \Delta_{\Xc}\phi \coloneqq\div_{\Xc} (\nabla_{\Xc} \phi)\,.
	\end{equation}	
	
	Problems \eqref{eq:KRvect_h} and \eqref{eq:dualvect_unbalq2_h} can be written as minimization problems of the following form
	\begin{equation}\label{eq:optim_gen}
		\min_\phi \sum_{i=1}^N f_i \left( L_i \phi \right),
	\end{equation}
	for $(f_i)_1^N$ convex functions and $(L_i)_1^N$ full rank operators.
	The SDMM algorithm \cite{combettes2011proximal} is designed to solve problems of this type and is shown in Algorithm \ref{alg:SDMM}.
	\begin{algorithm}
		\SetAlgoLined
		Starting from $\boldsymbol{y}^0,\boldsymbol{z}^0$ and $\tau>0$ \; 
		\For{$n=0,1,\dots$}{
			$\phi^n=\left( \sum_{i=1}^N L_i^T L_i\right)^{-1}\left(\sum_{i=1}^N L_i^T (y^n_i-z^n_i)\right)$ \;
			\For{$i=1,..,N$}{
				$y_i^{n+1}=\prox_{\tau f_i}(L_i \phi^n+z_i^n)$ \;
				$z_i^{n+1}=z_i^n+L_i \phi^n-y_i^{n+1}$\;}
		}
		\caption{SDMM}
		\label{alg:SDMM}
	\end{algorithm}
	More precisely,  the latter solves problem \eqref{eq:optim_gen} in the form
	\begin{equation}\label{eq:optim_gen_aug}
	\min_{\phi,\boldsymbol{y}} \left\{\sum_{i=1}^N f_i \left( y_i \right), \, L_i \phi=y_i, \,1\le i\le N\right\}.
	\end{equation}
	One of the main building blocks is the computation of the proximal operators of $(f_i)_1^N$. For a real valued convex function $f$ and a real parameter $\tau>0$, the proximal operator (with respect to the rescaled Euclidean norm $|\cdot|_{\Xc,2}$) is defined as
	\[
	x^*=\prox_{\tau f}(y)=\argmin_x \frac{1}{2\tau} |x-y|_{\Xc,2}^2 +f(x) \,.
	\]
	The approach can be particularly effective as long as the computation of the proximal operators and the inversion of the operator $\sum_i^N L_i^T L_i$ can be done efficiently.
	
	For problem \eqref{eq:KRvect_h}, the operators $L_i$ and the functions $f_i$ are
	\[
	\begin{gathered}
		L_1=\Id_{\Xc}\,, \quad L_2=\nabla_{\Xc}\,, \quad L_3= \frac{1}{\lambda}\Id_{\Xc}\,, \\
		f_1(u)=-\langle u,\nu-\mu\rangle_\Xc\,, \quad f_2(w)=i_{\Bc_{\Xc^d}}(w)\,, \quad f_3(u)=i_{\Bc_\Xc}(u)\,,
	\end{gathered}
	\]
	for $u\in \mathfrak{F}(\Gc;\V), w\in \mathfrak{F}(\Gc;\Vd)$,
	where $\Id_{\Xc}$ denotes the identity operator on $\mathfrak{F}(\Gc;\V)$ and $i_{\Bc_\Xc}$ (respectively $i_{\Bc_{\Xc^d}}$) is the convex indicator function \eqref{eq:indicator} of the set
	\[
	\Bc_\Xc=\left\{ u\in \mathfrak{F}(\Gc;\V)\,, |u|_{\Xc,\infty} \le 1 \right\} \; \big(\text{respectively } \Bc_{\Xc^d}=\{ w\in \mathfrak{F}(\Gc;\Vd)\,, |w|_{\Xc^d,\infty} \le 1 \}\big).
	\]
	For problem \eqref{eq:dualvect_unbalq2_h} instead
	\[
	\begin{gathered}
		L_1=\Id_{\Xc}\,, \quad L_2=\nabla_{\Xc}\,, \\
		f_1(u)=-\langle u,\nu-\mu\rangle_\Xc+\frac{1}{2\lambda} |u|^2_{\Xc^d,2}, \quad f_2(w)=i_{\Bc_{\Xc^d}}(w) \,.
	\end{gathered}
	\]
	
	The three proximal steps are inexpensive operations as they can be solved component-wise in closed form.
	For $q=1$, the first one amounts to solve the problem
	\[
	\begin{aligned}
	\bar{v}=\prox_{\tau f_1}(u)&=\argmin_{v} \frac{1}{2\tau}|v-u|_{\Xc,2}^2- \langle v,\nu-\mu\rangle_\Xc \\
	&=\Big( \argmin_{v_{i,j}} \frac{1}{2\tau}|v_{i,j}-u_{i,j}|_\V^2- \langle v_{i,j},\nu_{i,j}-\mu_{i,j}\rangle_\V \Big)_{i,j},
	\end{aligned}
	\]
	whose solution is simply
	\[
	\bar{v}_{i,j}=u_{i,j}+\tau(\nu_{i,j}-\mu_{i,j}) \,,\quad \forall (i,j).
	\]
	For the case $q=2$ the proximal operator is analogous, with an additional quadratic term, and the solution becomes
	\[
	\bar{v}_{i,j}=\frac{1}{1+\tau/\lambda}\left(u_{i,j}+\tau(\nu_{i,j}-\mu_{i,j})\right) \,,\quad \forall (i,j).
	\]
	The proximal operator of $i_{\Bc_\Xc}$ and $i_{\Bc_{\Xc^d}}$ are component-wise block soft-thresholding \cite{parikh2014proximal} with respect to the norms $|\cdot|_\V$ and $|\cdot|_\Vd$,
	\[
	\bar{v}_{i,j}=\Big(\prox_{\tau i_{\Bc_\Xc}}(u)\Big)_{i,j}=\Big(\argmin_{v\in i_{\Bc_\Xc}} |v_{i,j}-u_{i,j}|_\V^2\Big)_{i,j}\,,
	\]
	whose solution is
	\begin{equation}
	\label{eq:ball_proj}	
	\bar{v}_{i,j}=
	\begin{cases}
		u_{i,j} &\text{if $|u_{i,j}|_\V\le1$} \\
		\frac{u_{i,j}}{|u_{i,j}|_\V} &\text{else} 
	\end{cases}
	\quad \forall (i,j),
	\end{equation}
	and analogously for $i_{\Bc_{\Xc^d}}$.
	Finally, the operator $\sum_i^N L_i^T L_i$ is
	\[
	\begin{aligned}
		&\sum_{i=1}^N L_i^T L_i	=\Delta_{\Xc}+\left(1+1/\lambda^2\right)\Id_{\Xc}\,, \quad &&\text{for $q=1$}, \\
		&\sum_{i=1}^N L_i^T L_i =\Delta_{\Xc}+\Id_{\Xc}\,, \quad &&\text{for $q=2$}.
	\end{aligned}
	\]

	Algorithm \ref{alg:SDMM} is provably convergent.
	Since $f_i$ are all lower semi-continuous, proper and convex, and thanks furthemore to the duality result established in Proposition \ref{prop:duality_discrete}, for $n\rightarrow +\infty$ it holds \cite{boyd2011distributed}
	\begin{equation}\label{eq:SDMM_convergence}
		\begin{gathered}
			z_i^n\longrightarrow z_i \quad \text{and}\quad  y_i^n-L_i\phi^n\longrightarrow 0\,, \quad \text{for $1\le i\le N$,}\\
			\sum_{i=1}^N f_i \left( y_i^n \right) \longrightarrow \min_\phi \sum_{i=1}^N f_i \left( L_i \phi \right),
		\end{gathered}
	\end{equation}
	where $\boldsymbol{z}=(z_i)_{i=1}^N$ is a solution to the dual problem to \eqref{eq:optim_gen_aug}.
	The following proposition relates this to the solution of the Beckmann problem \eqref{eq:BPvect_pq_h}.
	
	\begin{proposition}\label{prop:sdmm_dual}
		Let $z$ be found through algorithm \eqref{alg:SDMM}, for either problem \eqref{eq:KRvect_h} (q=1) or \eqref{eq:dualvect_unbalq2_h} (q=2). Then:
		\begin{enumerate}[label=(\roman*)]
			\item $z_1=\tau(\mu-\nu)$, $z_2=\tau \sigma$, $z_3=\tau\lambda \delta$ ($q=1$),
			\item $z_1=\tau(\mu-\nu+\delta)$, $z_2=\tau \sigma$ ($q=2$),
		\end{enumerate}
		where $(\sigma,\delta)$ is a solution to \eqref{eq:BPvect_pq_h}.
	\end{proposition}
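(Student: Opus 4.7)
The plan is to exploit the convergence result \eqref{eq:SDMM_convergence}, which asserts that the limits $z_i$ produced by Algorithm \ref{alg:SDMM} solve the Fenchel dual of the split formulation \eqref{eq:optim_gen_aug}. More precisely, writing a Lagrangian for \eqref{eq:optim_gen_aug} with multipliers $\kappa_i$ for the constraints $L_i\phi = y_i$ and eliminating $\phi$ and $y$ yields the dual $\max_{\boldsymbol{\kappa}}-\sum_i f_i^\ast(\kappa_i)$ under the stationarity condition $\sum_{i=1}^N L_i^\ast\kappa_i = 0$. In the scaled ADMM parametrization used in the pseudocode, the optimality condition of the proximal update reads $z_i^{n+1}/\tau \in \partial f_i(y_i^{n+1})$, so that in the limit $z_i/\tau$ plays the role of $\kappa_i$. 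Hence I will identify $z_i = \tau\kappa_i$ and compare the resulting dual problem with \eqref{eq:BPvect_pq_h}.

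For the case $(p,q)=(1,1)$, I would compute the conjugates directly: $f_1^\ast(\kappa_1)$ is finite (and equal to zero) only when $\kappa_1 = \mu-\nu$, while $f_2^\ast$ and $f_3^\ast$ are the support functions of the unit balls $\Bc_{\Xc^d}$ and $\Bc_\Xc$, yielding $|\cdot|_{\Xc^d,1}$ and $|\cdot|_{\Xc,1}$ respectively. Using $\nabla_\Xc^\ast = -\div_\Xc$, the constraint $\sum_i L_i^\ast \kappa_i = 0$ becomes $\div_\Xc \kappa_2 = \mu-\nu + \kappa_3/\lambda$. Introducing $\sigma := \kappa_2$ and $\delta := \kappa_3/\lambda$, the dual rewrites as the negative of \eqref{eq:BPvect_pq_h} with $(p,q)=(1,1)$, i.e.\ minimize $|\sigma|_{\Xc^d,1} + \lambda|\delta|_{\Xc,1}$ subject to $\div_\Xc\sigma = \mu-\nu+\delta$. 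The stated identification $z_1=\tau(\mu-\nu)$, $z_2=\tau\sigma$, $z_3=\tau\lambda\delta$ then follows directly.

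For $(p,q)=(1,2)$, only $f_1$ is different; a short Gaussian-type computation gives $f_1^\ast(\kappa_1) = (\lambda/2)|\kappa_1+\nu-\mu|^2_{\Xc,2}$, and $f_2^\ast = |\cdot|_{\Xc^d,1}$ as before. The stationarity constraint now reduces to $\kappa_1 = \div_\Xc \kappa_2$. Setting $\sigma := \kappa_2$ and $\delta := \kappa_1+\nu-\mu$, one recovers the divergence constraint $\div_\Xc\sigma = \mu-\nu+\delta$ and the objective $|\sigma|_{\Xc^d,1} + (\lambda/2)|\delta|^2_{\Xc,2}$, i.e.\ exactly the negative of \eqref{eq:BPvect_pq_h} with $(p,q)=(1,2)$. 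This yields $z_1=\tau(\mu-\nu+\delta)$ and $z_2=\tau\sigma$.

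The main delicate point I anticipate is the bookkeeping between the scaled dual variables $z_i$ stored by the algorithm and the Lagrange multipliers $\kappa_i$ of the splitting formulation: the factor $\tau$ must be tracked consistently through the prox-based optimality conditions, and the adjoint conventions for $\nabla_\Xc$ and $(1/\lambda)\mathrm{Id}_\Xc$ carefully applied. Once that is in place, the rest is a purely mechanical computation of conjugates and reindexing, and uniqueness of the Beckmann primal solution (whose existence was established in Proposition \ref{prop:duality_discrete}) ensures that the correspondence $\kappa_2 \leftrightarrow \sigma$, $\kappa_3/\lambda \leftrightarrow \delta$ (respectively $\kappa_1 + \nu - \mu \leftrightarrow \delta$) is unambiguous.
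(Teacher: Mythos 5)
Your proposal is correct and takes essentially the same route as the paper: both compute the Lagrangian/Fenchel dual of the split formulation \eqref{eq:optim_gen_aug} (conjugating the $f_i$ and using $\nabla_\Xc^* = -\div_\Xc$ to turn the stationarity condition into the divergence constraint), identify the result with the discrete Beckmann problem \eqref{eq:BPvect_pq_h} via the reindexing $\sigma=\kappa_2$, $\delta=\kappa_3/\lambda$ (resp.\ $\delta=\kappa_1+\nu-\mu$), and invoke the scaled-ADMM rescaling by $1/\tau$ to relate the stored variables $z_i$ to the multipliers. One small caveat: your closing appeal to uniqueness of the Beckmann primal solution is neither established in \cref{prop:duality_discrete} nor true in general for $p=1$, but it is also unnecessary, since the proposition only asserts that the limit yields \emph{a} solution $(\sigma,\delta)$, which follows directly from dual optimality of the limiting $z$.
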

	\begin{proof}
		We start from the case $q=1$.
		It is sufficient to compute the dual problem to \eqref{eq:optim_gen_aug}: 
		\[
		\begin{aligned}
			\eqref{eq:optim_gen_aug}&=\min_{\phi,\boldsymbol{y}}\max_{\boldsymbol{z}} \sum_{i=1}^N f_i \left( y_i \right) + \sum_{i=1}^N z_i \left(L_i \phi-y_i \right)\\
			&=\max_{\boldsymbol{z}} \min_{\phi,\boldsymbol{y}} \sum_{i=1}^N f_i \left( y_i \right) + \sum_{i=1}^N z_i \left(L_i \phi-y_i \right).
		\end{aligned}
		\]
		The minimization in $y_1$ provides $z_1=\mu-\nu$ and following the same computations exposed in Section \ref{sec:vectorL1}, one readily obtains
		\[
		\begin{aligned}
			\eqref{eq:optim_gen_aug}&=\max_{\boldsymbol{z}}\min_{\phi} - |z_2|_{\Xc^d,1} -|z_3|_{\Xc,1}+\langle \phi, \mu-\nu \rangle_{\Xc}+\langle z_2, \nabla_\Xc \phi \rangle_{\Xc}+\langle z_3, \frac{\phi}{\lambda}\rangle_{\Xc} \\
			&=\max_{\boldsymbol{z}}\Big\{ -|z_2|_{\Xc^d,1} - |z_3|_{\Xc,1} \,, \;\div_{\Xc} (z_2)=\mu-\nu+\frac{z_3}{\lambda} \Big\}.
		\end{aligned}
		\]
		Up to changing sign and rescaling $z_3$ by $\lambda$, this coincides with problem \eqref{eq:BPvect_pq_h}. Finally, since Algorithm \eqref{alg:SDMM} operates a rescaling on the dual variable $\boldsymbol{z}$ by $\frac{1}{\tau}$ \cite[Section 3.1]{boyd2011distributed}, we obtain the claim.
		The case $q=2$ is deduced analogously. 
	\end{proof}	
	
	\begin{remark}[Stopping criterion]\label{rmk:stopping}	
		Thanks to \eqref{eq:SDMM_convergence} and Proposition \ref{prop:sdmm_dual}, we can consider
		\[
		(\sigma^n,\delta^n)=\left(z^n_2/\tau,\div_{\Xc}(z^n_2)/\tau+\nu-\mu\right)
		\]
		as a candidate solution to \eqref{eq:BPvect_pq_h} in order to estimate the duality gap.
		By definition of the SDMM algorithm, $\sum_{i=1}^N f_i \left( y_i^n \right)=f_1(y_1^n)$ and we can therefore control the (relative) duality gap by
		\[
		\Delta^n=\frac{1}{|f_1(y_1^n)|}\left| |\sigma^n|_{\Xc^d,1} + \frac{\lambda}{q} |\delta^n|^q_{\Xc,q} +f_1(y_1^n) \right|.
		\]
		Of course, without a control on the constraints $L_i \phi=y_i$ this information is meaningless. We can evaluate the (relative) error on the constraints as
		\[
		r^n=\frac{\sum_i^N |L_i \phi^n-y_i^n|_{\Xc,2}}{\text{\normalfont max}\big(  \sum_i^N |L_i \phi^n|_{\Xc,2}   \ ,\ \sum_i^N |y_i^n|_{\Xc,2}  \big)}\,.
		\]
		Then, for $\varepsilon \in\R_+$, we can set for simplicity as stopping criterion $\max(\Delta^n,r^n) \le \varepsilon$.
	\end{remark}
	
	\begin{remark}[Computational complexity]\label{rmk:complexity}
		The SDMM algorithm has in general a sub-linear rate of convergence \cite{wang2013online}, without further regularity hypotheses on the objective functions $f_i$, both in the primal objective and the constraints. This implies that $O(\frac{1}{\varepsilon})$ iterations are necessary to solve problem \eqref{eq:optim_gen} up to a tolerance $\varepsilon$. 
		At each iteration, the most expensive operation is the inversion of the operator $\sum_i L_i^TL_i$, which corresponds to the solution of a Poisson equation valued in $\V$.
		As already pointed out in Section \ref{sec:notation}, by expanding the discrete functions of $\mathfrak{F}(\Gc;\V)$ in a basis of $\V$, the operators $\div_{\Xc}$ and $\nabla_{\Xc}$ act component-wise on the coefficients of the basis and so does their composition $\Delta_{\Xc}$, meaning that they can be represented as block-diagonal matrices. The corresponding linear systems can therefore be solved separately in each component, which implies solving $n$ scalar Poisson equations. 
		In our setting, since the problem is discretized on a Cartesian grid using finite differences, this can be done very efficiently and with guaranteed $N_\Gc\log N_\Gc$ complexity via Fast Fourier Transform \cite{swarztrauber1974direct}, where $N_\Gc=N_x \times N_y$ is the total number of degrees of freedom (more generally, for $d>2$, $N_\Gc=\prod_{i}^{d} N_i$ with $N_i$ number of discretization points along dimension $i$), i.e. the number of grid points. Note that similar perfomarnces can be obtained via multigrid solvers \cite{brandt1977multi}, which can deal with more general discretizations on unstructured grids.
		An $\varepsilon$ precise solution can then be found with $O(\frac{n N_\Gc\log N_\Gc}{\varepsilon})$ operations and the method is therefore suited to scale to large dimensions, both with respect to the number of grid points and the dimension of the vector space $\V$.
		Finally, we stress also that, as in the scalar case, requiring $\mu, \nu\in \mathfrak{F}(\Gc;\Co)$ take their values in the cone $\Co \subset V$ does not reflect on the computational complexity of the algorithm, an advantage of the $L^1$ optimal transport problem compared to other formulations.
	\end{remark}

	\begin{remark}
		In order to solve problem \eqref{eq:BPvect_pq_h} for $(p,q)=(1,2)$ we may try to adopt proximal-splitting techniques that take advantage of the higher regularity of the objective functional, such as the accelerated forward-backward algorithm \cite{combettes2011proximal} or the Chambolle-Pock algorithm \cite{chambolle2011first}. Nevertheless, we have not experienced a sensible speed-up by using these algorithms with respect to the SDMM, at least for the regimes of $\lambda$ of interest.
		Some of these methods can avoid the inversion of the Laplacian, which makes individual iterations faster. However, the number of iterations required to reach convergence increases so much that it is not cost effective eventually. 
	\end{remark}

	\begin{remark}[Operator norm]
	Throughout this paper, we chose to formulate the vector-valued $L^1$ optimal transport problem \eqref{eq:dualvect} using the gradient norm $|\nabla \phi|_{\Vd}$, which corresponds to the Frobenius norm in the identification $\Vd = L(\R^d;V)$; the flow norm $|\sigma|_{\Vd}$ also appears in the dual formulation \eqref{eq:BPvect}.
	Alternatively, we may have used the operator norm $|\nabla \phi|_{L(\R^d;V)}$ for the gradient, and thus the dual norm for $|\sigma|_{L(\R^d;V)^*} = \Tr(\sqrt{\sigma^\top \sigma})$ for the flow.
	The operator norm has the advantage of preserving the connection \eqref{eq:lip_iff_grad1} with Lipschitz regularity, which only holds up to a multiplicative constant in the case of the Frobenius norm \eqref{eq:Fr_op_norms}.
	The adaptation of the theoretical results and algorithms presented in the paper is straightforward, except for the proximal operator of $i_{\Bc_{\Xc^d}}$, which is obtained by pointwise orthogonal projection \eqref{eq:ball_proj} at each site $(i,j)$ of the grid onto the unit ball of $\Vd = L(\R^d;V)$ w.r.t.\ the chosen norm. In this specific step, the normalisation procedure \eqref{eq:ball_proj} which is adequate for $|\cdot|_{\Vd}$, should be replaced with the orthogonal projection
	\begin{equation}
	\label{eq:proj_op}	
		\argmin\{ |A-B|_{\Vd},\, |B|_{L(\R^d;V)} \leq 1\}.
	\end{equation}
	Assuming that $V = \R^n$, up to choosing an orthonormal basis, the solution to \eqref{eq:proj_op} is obtained \cite[Theorem 2.1]{cai2010singular} as $B_{\mathrm{opt}} = U \diag(\min(\lambda_1,1),\cdots, \min(\lambda_d,1)) V$, where $A = U \diag(\lambda_1,\cdots,\lambda_d) V$ is the singular value decomposition (with $U \in \R^{d \times d}$ and $V \in \R^{d \times n}$ obeying $ U U^\top = V V^\top = \Id_d$, and $\lambda_1,\cdots,\lambda_d\geq 0$). 
	From the numerical standpoint the added numerical cost is linear in the problem size, since $d$ and $n$ are small and independent of the grid dimensions.
	\end{remark}

	\section{Numerical results and application to FWI}\label{sec:numerics}
	
	In this section we consider the application of the $L^1$ transport for vector valued measures to a specific inverse problem: the full waveform inversion (FWI) problem.
	
	\subsection{The acoustic wave equation and the FWI inverse problem}
	
	The acoustic wave equation with constant density $\rho$ can be written as (where here $\nabla$ and $\div$ are the classical gradient and divergence operator, corresponding to the case $\V=\R$ in Section \ref{sec:notation})
	\begin{equation}\label{eq:acoustic}
		\begin{cases}
			\partial_t v-\frac{1}{\rho}\nabla p = 0 \\
			\partial_t p-\rho c^2 \div(v) = f
		\end{cases}
	\end{equation}
	where $p:D\subset\R^m\rightarrow\R$ is the scalar pressure field, $v:D\rightarrow\R^m$ is the vector field of particles' velocity and $f$ is a given source term.
	The objective of FWI, in the simpler acoustic setting, is to reconstruct the velocity of the medium $c:D\rightarrow\R_+$ from observed data $v^{obs},p^{obs}$, which are signed oscillatory functions.
	
	We restrict here to the two dimensional case for simplicity, $m=2$, and we consider two-dimensional vector valued data $v^{obs}=(v_x^{obs},v_z^{obs})$. In this setting, we can introduce the space  of symmetric matrices $\V=\mathrm{S}^2$ and lift $v^{obs}$ in the self-dual cone of symmetric positive semi-definite matrices $\Co=\mathrm{S}^2_+$ (thus the lifted data dimension is $n=\dim(\mathrm{S}^2)=3$).
	We consider for this purpose Pauli's transformation (in dimension 2) as a lift operator $L$:
	\begin{equation}\label{eq:Pauli}
		v=(v_x,v_z) \;\longmapsto\; L(v_x,v_z)=\begin{bmatrix}
			\alpha-v_x & v_z \\ v_z & \alpha+v_x
		\end{bmatrix}, \quad \text{with $\alpha=\sqrt{v_x^2+v_z^2}$} \,.
	\end{equation}
	The advantage of \eqref{eq:Pauli} is that it is bijective from $\R^2$ to its image, i.e.\ no information is lost in the transformation. Another possibility could be to consider the tensor product
	\[
	v=(v_x,v_z)\;\longmapsto\; v^T v=
	\begin{bmatrix}
		v_x^2 & v_x v_z  \\ v_x v_z & v_z^2
	\end{bmatrix},
	\]
	however this lift is invariant with respect to sign changes of $v$ which limits its effectiveness. 
	This sign invariance amounts to doubling the frequency of oscillating signals, and thus to a greater risk of \emph{cycle skipping} in the application of interest (see Section \ref{ssec:shift} below).
	
	We write the FWI problem as
	\begin{equation}\label{eq:FWI}
		\min_{c} \sum_{s=1}^{N_s} \Tc_{p,q}^\lambda(d_{s}[c],d^{obs}_s), \quad \text{such that \eqref{eq:acoustic} holds,}
	\end{equation}
	where $d_s[c]=L(R v_{x,s},R v_{z,s})$ and $v_{x,s},v_{z,s}$ are the computed particles' velocities given the model $c$ (for simplicity omitting the dependencies), $d^{obs}_s=L(R v^{obs}_{x,s},R v^{obs}_{z,s})$. The operator $R$ evaluates the velocity at specific points in the domain, the receivers locations, for a certain time span. The peculiarity of FWI is that the whole recording, i.e.\ the whole form of the wave, is used in the inversion problem. The index $s$ runs instead over all the possible sources $f_s$: the FWI problem is usually performed by running several simulations in parallel for different sources. Finally, $L$ is the lift function defined in \eqref{eq:Pauli}.
	
	\begin{remark}\label{rmk:lorentz}
		The space of symmetric matrices $\mathrm{S}^{n'}$, endowed with the Frobenius scalar product, is a finite dimensional Euclidean space and is thus isometric to $\R^{n' (n'+1)/2}$ endowed with the usual scalar product. When $n'=2$, such an isometry is 
		$\begin{pmatrix}
			a & b\\
			b & c
		\end{pmatrix}
		\mapsto  2^{-\frac 1 2} (\frac {a-c} 2, b,\frac {a+c} 2),
		$
		and it maps the cone $\mathrm{S}^2_+$ of symmetric positive semi-definite matrices onto the Lorentz cone $\{(x,z,t)\in\R^3, \ \sqrt{x^2+z^2}\le t\}$.
		This isometry eases the numerical implementation.
		In these coordinates, Pauli's lift above reads as $(v_x,v_z) \mapsto (v_x,v_z,\alpha)$, up to a multiplicative constant.	
	\end{remark}
	
	\subsection{Calibration of $\lambda$}\label{ssec:lambda}
	
	The parameter $\lambda$ controls mass transport by tuning the penalization on the divergence constraint.
	On the one hand, if it is too small, mass can be deformed without being transported and the model is intuitively close to an $L^q$ distance on the difference of the two measures: $\Tc_{p,q}^\lambda \sim \frac \lambda q \int_{\Om}|\mu-\nu|_\V^q$ as $\lambda \to 0$.
	On the other hand, it is important to notice that in the seismic imaging application the two measures can have different total mass.
	This mass is in fact related to the energy of the signals if one uses Pauli's lift above and considers $q=2$ (which is our original motivation for considering $q=2$), or the tensor product lift with $q=1$. Observe nevertheless that the total energy of the signals can be quite different, since these are recorded only partially in the domain.
	In the other cases, such as Pauli's lift with $q=1$, the total mass is a non-physical quantity, which has no reason to be the same.
	One can check that $\Tc_{p,q}^\lambda \sim \frac \lambda q |\int_{\Om}\mu -\int_{\Om}\nu|_\V^q$ as $\lambda \to \infty$. The misfit function thus becomes proportional to the $q$-th power of the difference between the masses of $\mu$ and $\nu$, for large $\lambda$, which is not a very relevant objective for the problem of interest.
	The $L^q$ and the difference of masses behaviors lead to poor results in the inversion problem, and the parameter $\lambda$ thus has to be carefully selected in order to avoid these.	
	
	Assume that the physical dimension of the measures is an amplitude, which is denoted $[\mu]=[\nu]=A$. Consequently the physical dimension of the flux is an amplitude times a length, $[\sigma]=A l$, and the physical dimension of $\lambda$ is $[\lambda]=\frac{l^p}{A^{q-p}}$. For $(p,q)=(1,1)$, the parameter $\lambda$ thus represents a length, whereas for $(p,q)=(1,2)$ it represents a length over the amplitude.
	Examples \ref{ex:deltas_KRvect} and \ref{ex:deltas_p1q2} help having a better understanding. For $(p,q)=(1,1)$, $\lambda$ provides half the maximum distance up to which mass can be transported, regardless of the mass involved. By knowing the maximum displacement we want to allow in the transport problem, it can be tuned accordingly.
	For $(p,q)=(1,2)$, $\lambda$ specifies the maximum distance up to which a unit of mass can be transported. This means that large amounts of mass can be transported at large scales, whereas small amounts only at small scales, proportionally to their mass.
	These considerations can be of help for the calibration of this parameter in practice.
	The maximum transport distance can be set by comparing the initial and final measures in order to allow the transport of the main structures (see for example Figure \ref{fig:seismograms}). In general, in the specific case of transporting signals, we may want to allow displacements of the order of the wavelengths involved. Smaller values are not relevant whereas bigger values may be dangerous due to the mass imbalance. The wavelength can be estimated from the frequency of the injected signal. The case $(p,q)=(1,2)$ is more delicate and the parameter $\lambda$ can be estimated taking into account the minimum value of mass allowed to be transported at a certain distance.
	
	We conclude this section by highlighting that the calibration of $\lambda$, following the considerations just made, depends on the scaling of the domain and, in the case $q=2$, also of the measures. A simple computation shows that if $\Omega$ is rescaled by a factor $L$ then $\lambda$ has to be scaled by the same factor, in both cases. If the measures are scaled by a factor $M$ instead, in the case $q=2$, then $\lambda$ has to be rescaled by the same factor again.  
	This scaling behavior, which is consistent with the above physical dimension considerations, has to be taken into account when solving practical instances (see Section \ref{ssec:seismo}).
	
	\subsection{Sensitivity to shifts of $\mathcal{T}_{p,q}^\lambda$}\label{ssec:shift}
	
	Optimal transport is generally used in inverse problems in order to acquire sensitivity to (time) shifts in the data, a feature which is not encoded using $L^p$ distances. In seismic imaging, this issue is referred to as \textit{cycle-skipping}. Due to signed data, optimal transport based misfit functions may fail to provide this sensitivity (see Examples \ref{ex:deltas_KRvect} and \ref{ex:KRshift}). By lifting the data to the space $\Mc(\Om;\Co)$ we aim at solving this issue.
	Here we test for this reason the different behavior of the misfit functions $\Tc_{p,q}^\lambda$, for the different values of the parameters $p,q$ and $\lambda$. We consider the following signal
	\[
	v(t)=(v_x(t),v_z(t))=\Big(\frac{d}{d t} f(t), \frac{d^2}{\d t^2} f(t) \Big),
	\]
	where $f(t)=\frac{1}{\sqrt{2\pi}\alpha}\exp^{-t^2/\alpha^2}$ with $\alpha=\frac{4}{3}$, and we evaluate the misfit between the signal itself and $v(t-t_0)$ for different values of $t_0$. The results are shown in Figure \ref{fig:shift} and are compared with the $L^2$ and the $\KR$ norms.
	
	The $L^2$ norm presents more local minima, explaining the drawback of using it in the FWI problem. The $\KR$ norm \eqref{eq:KRscal} presents several minima as well for small values of the parameter $\lambda$. For a sufficiently big value, there is a single well-defined minimum, but the misfit saturates to a constant value. The time average of $v_x(t)$ and $v_z(t)$ is in fact zero and the $\KR$ norm is therefore not sensitive to big shifts (see Example \ref{ex:KRshift}).
	For signals with non-zero mean the $\KR$ norm recovers sensitivity. However, typically signals in seismic imaging do have zero mean, which is a major issue for its application in this setting.
	The misfits $\mathcal{T}_{p,q}^{\lambda}$ are not convex with respect to the shift either, but for a sufficiently big value of $\lambda$ they have a unique minimum, that is as long as $\lambda$ is sufficiently big to allow sufficient transport of mass. We can also notice, empirically in this example, that $\mathcal{T}_{1,1}^{\lambda}$ is always increasing with respect to the shift, an advantage of our construction with respect to the $\KR$ norm \eqref{eq:KRscal} applied to signed measures. Finally, for $p=1$ the misfit function is more selective around the minimum contrary to the case $p=2$, where it is more flat, suggesting why the $H^{-1}$ norm may not be a good choice.
	
	\begin{figure}
		\centering
		\subfloat[]{\includegraphics[trim={0.5cm 0.3cm 1.2cm 1.1cm},clip,width=0.33\textwidth]{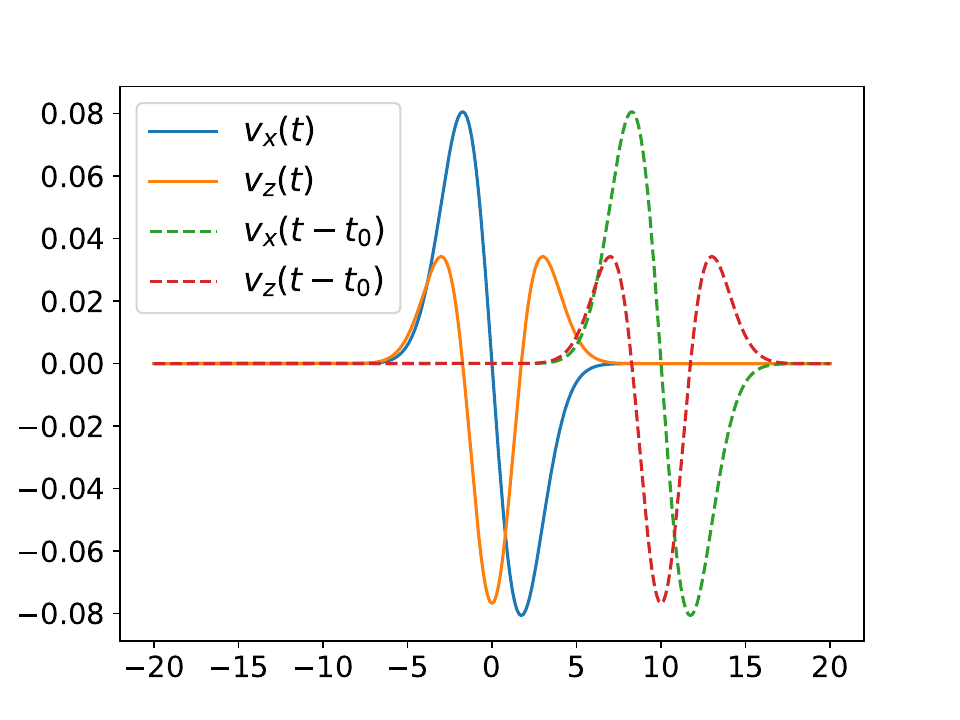}}
		\subfloat[$L^2$ norm]{\includegraphics[trim={0.5cm 0.3cm 1.2cm 1.1cm},clip,width=0.33\textwidth]{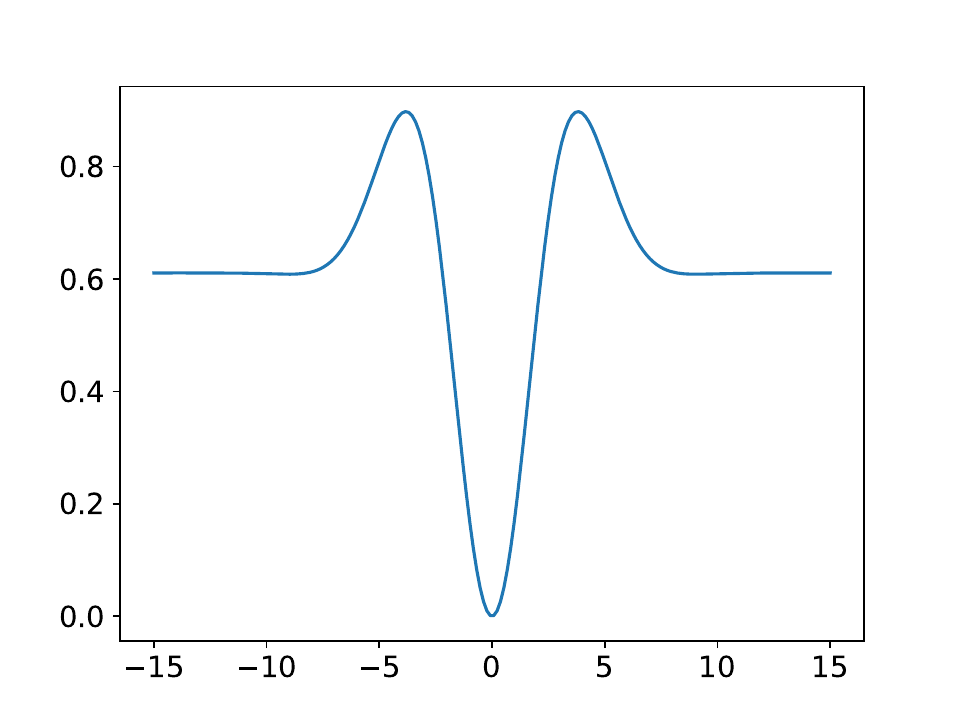}}
		\subfloat[$\KR$ norm]{\includegraphics[trim={0.5cm 0.3cm 1.2cm 1.1cm},clip,width=0.33\textwidth]{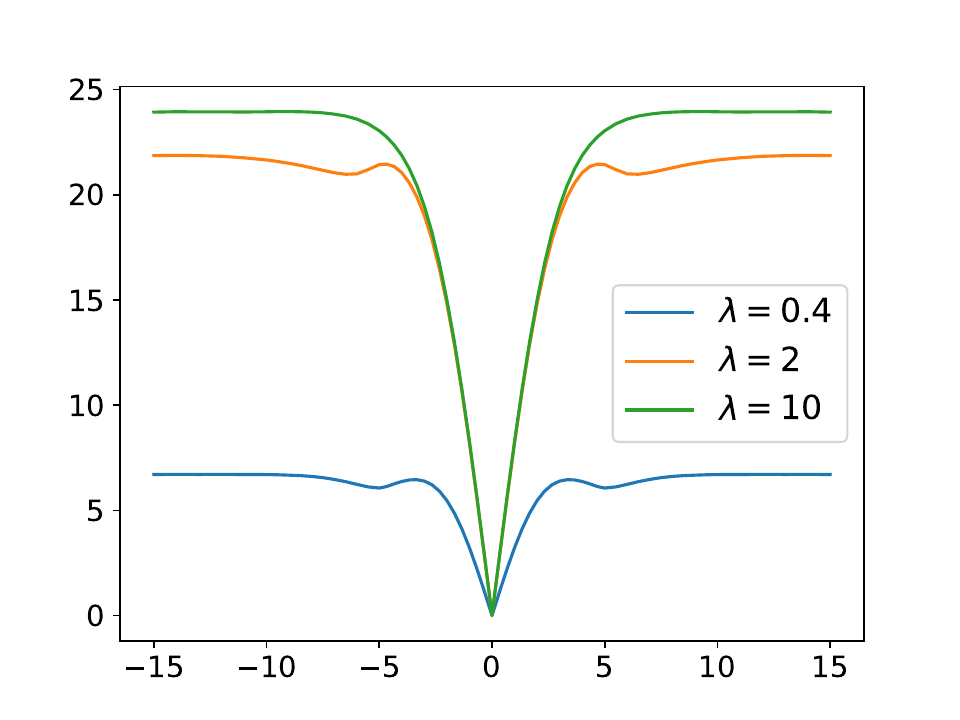}} \\
		\subfloat[$\lambda=0.4$]{\includegraphics[trim={0.5cm 0.3cm 1.2cm 1.1cm},clip,width=0.33\textwidth]{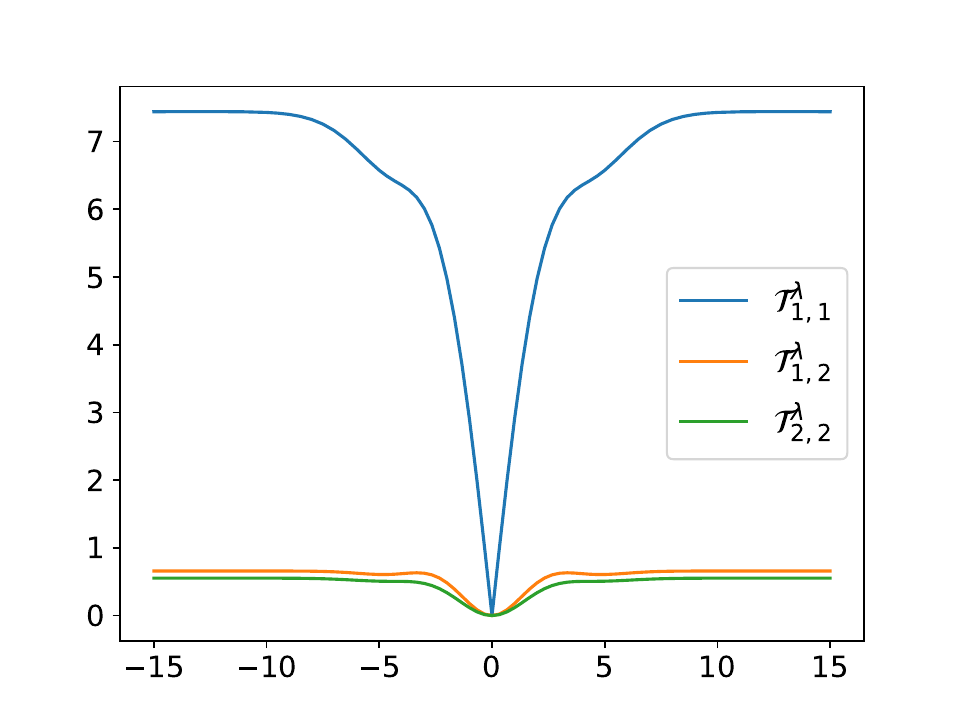}}
		\subfloat[$\lambda=2$]{\includegraphics[trim={0.5cm 0.3cm 1.2cm 1.1cm},clip,width=0.33\textwidth]{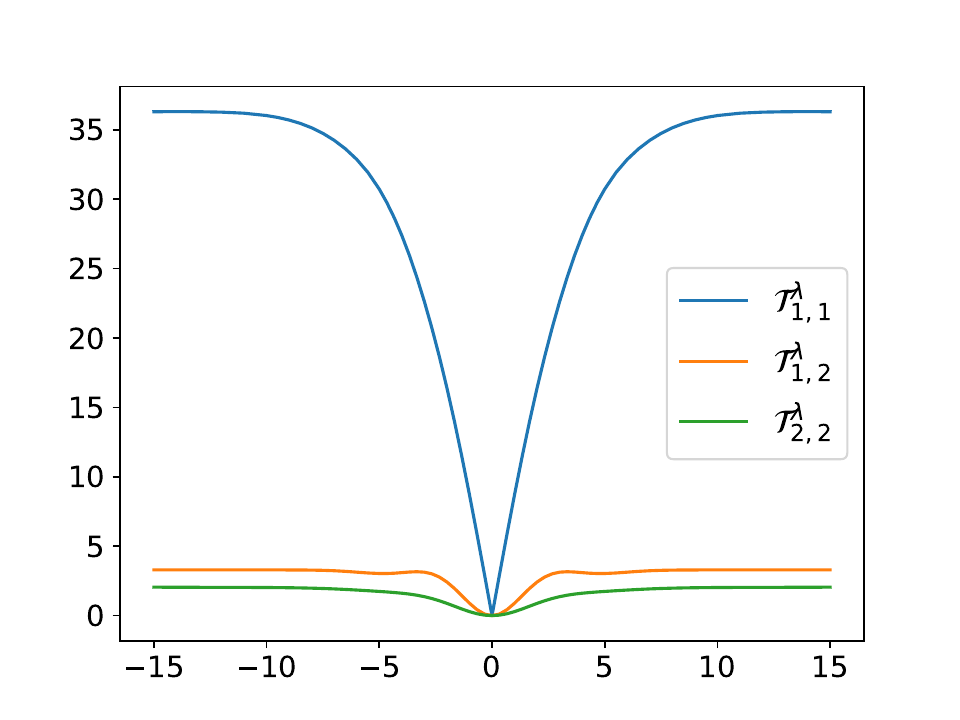}}
		\subfloat[$\lambda=10$]{\includegraphics[trim={0.5cm 0.3cm 1.2cm 1.1cm},clip,width=0.33\textwidth]{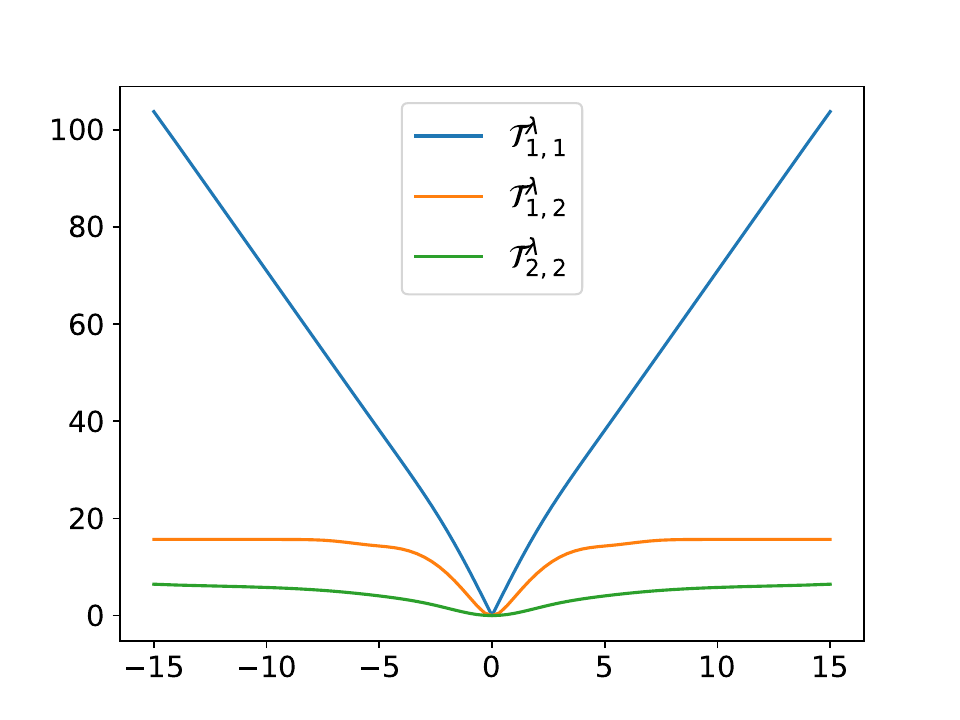}}
		\caption{Sensitivity to cycle-skipping of $\mathcal{T}_{p,q}^{\lambda}$ (d-f), for different values of $\lambda$, and comparison with the $L^2$ distance (b) and the $\KR$ norm \eqref{eq:KRscal}. In each graph (b-f) it is represented the evaluation of the misfit function between the multicomponent signals $(v_x(t),v_z(t))$ and $(v_x(t-t_0),v_z(t-t_0))$ (a), with respect to the time shift $t_0$. For the $L^2$ and $\KR$ norms, the distance is evaluated component-wise and summed.}
		\label{fig:shift}
	\end{figure}

	\subsection{Transport between seismograms}\label{ssec:seismo}
	
	We perform now a test case computing the unbalanced $L^1$ optimal transport problem between two-dimensional vector valued seismograms, in order to further inspect the influence of the parameter $\lambda$ and to compare the two cases $(p,q)=(1,1)$ and $(p,q)=(1,2)$. The setting for the test is as follows.
	We consider a two-dimensional physical domain $D=[a,b]\times[c,d]\subset\R^2$. In $D$, $N_r$ receivers are positioned uniformly along the $x$ axis at a certain constant depth $z$. The data domain, corresponding to the registration time of each receiver, is given by $\Omega=[0,T]\times\{1,..,N_r\}$ and can be considered as a semi-discrete bidimensional domain. This allows to interpret the measurements as semi-discrete bidimensional seismograms. By considering a regular time sampling for $(v_x,v_z)$, this provides a regular Cartesian grid $\Gc$. 
	The components of the lift \eqref{eq:Pauli} are presented in Figure \ref{fig:seismograms} for two examples. 
	They correspond to the lifted particles' velocities generated through the acoustic wave equation \eqref{eq:acoustic} with two different velocity models $c$. In these examples, the traces are registered by $N_r=169$ receivers and regularly sampled $3000$ times. The single point source $f$ is placed in the same location as the $84$-th receiver and injects a Ricker signal.
	
	We solve the problem between the two vector valued measures $\mu$ and $\nu$ represented in Figure \ref{fig:seismograms}. In order to avoid anisotropic effects due to the different dimensions of $\Omega$, and to have a meaningful transport problem, we rescale one of the two directions. Indeed, on the $x$ direction the physical dimension is a length whereas on the $z$ direction it is time. For example, we can rescale the time by the average velocity of the model, $\overline{c}\approx 2000 \,m/s$. Considering the recording time of $\approx 7$ seconds and the horizontal size of the domain $D$ of $16.9$ kilometers, this provides approximately a square domain.
	We consider therefore $\Om=[0,\ell]^2$, with $\ell=N_r-1$. We stress that this scaling has an influence on the choice of the parameter $\lambda$, as highlighted in Section \ref{ssec:lambda}.
	In Figures \ref{fig:delta_x}, \ref{fig:delta_z} and \ref{fig:delta_alpha}, we show the three components of the computed optimal $\delta$ measures for different values of $\lambda$, in the two cases $q=1$ and $q=2$. The value of $\delta$ determines how much mass is injected in the problem and therefore gives a measure of what is or is not transported. It can be compared with the difference of the two measures, i.e.\ the zero transport case (Figure \ref{fig:seismograms}).
	
	\begin{figure}
		\centering
		\setlength{\tabcolsep}{4pt}
		\renewcommand{\arraystretch}{1.5}
		\begin{tabular}{C{0.05\textwidth}C{0.27\textwidth}C{0.27\textwidth}C{0.27\textwidth}C{0.05\textwidth}}
			& \includegraphics[trim={0.5cm 1.5cm 1cm 12.3cm},clip,width=0.28\textwidth]{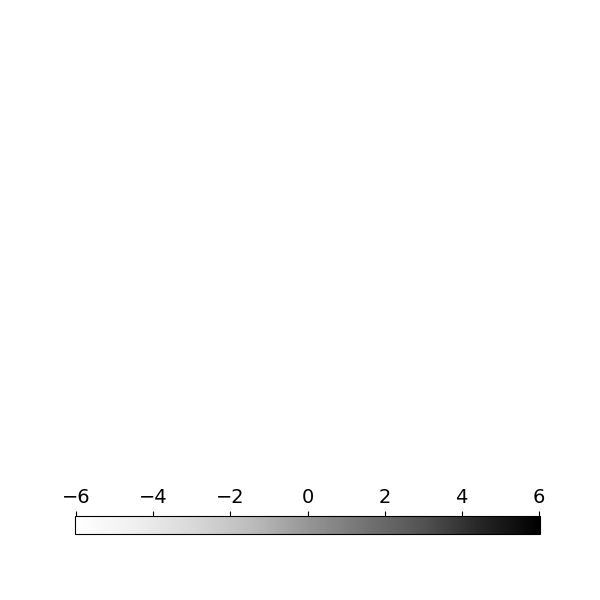} &
			\includegraphics[trim={0.5cm 1.5cm 1cm 12.3cm},clip,width=0.28\textwidth]{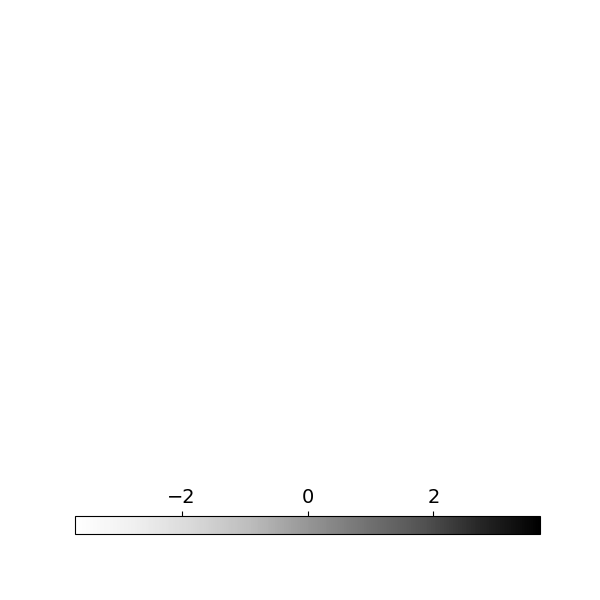} &
			\includegraphics[trim={0.5cm 1.5cm 1cm 12.3cm},clip,width=0.28\textwidth]{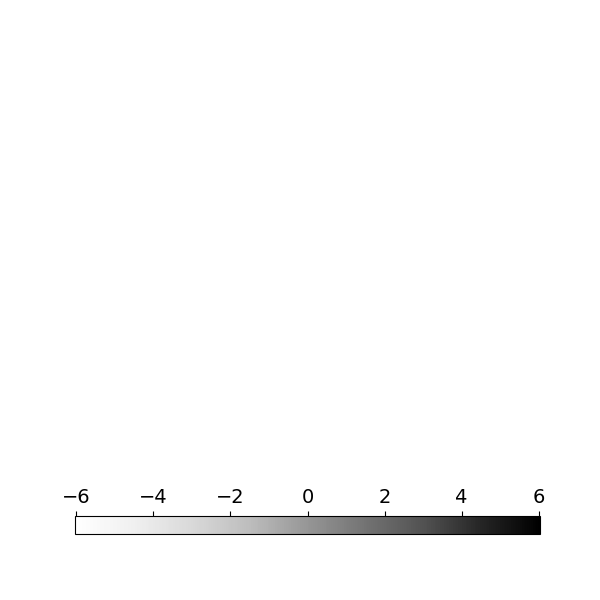} & \\
			{\small $\mu$} &
			\includegraphics[trim={2.5cm 1.3cm 3.4cm 2cm},clip,width=0.27\textwidth]{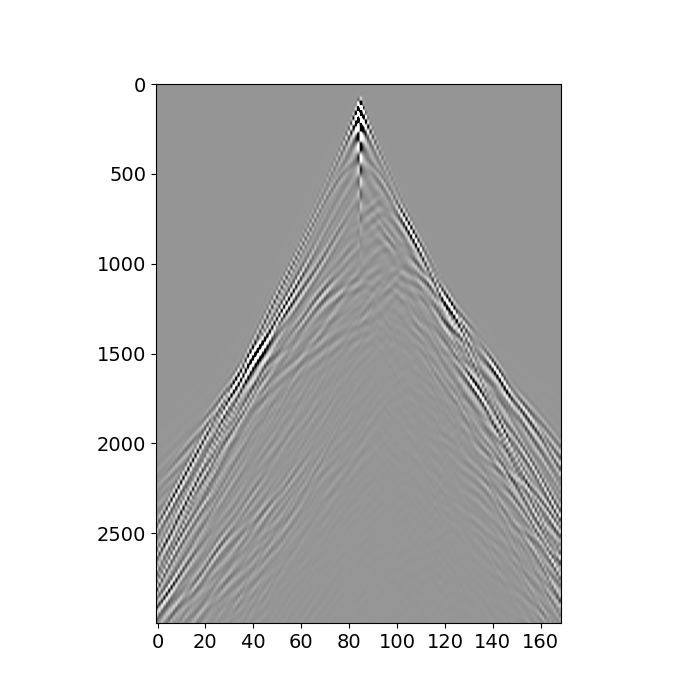} & 
			\includegraphics[trim={2.5cm 1.3cm 3.4cm 2cm},clip,width=0.27\textwidth]{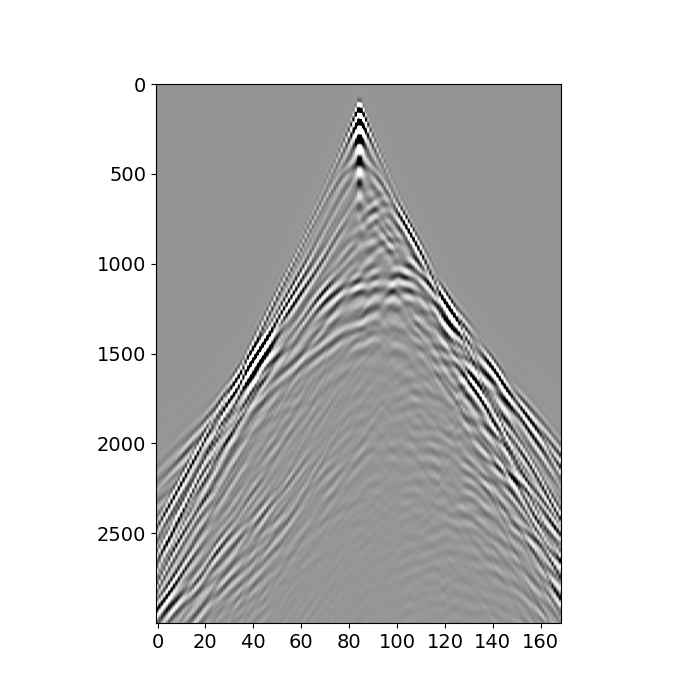} &
			\includegraphics[trim={2.5cm 1.3cm 3.4cm 2cm},clip,width=0.27\textwidth]{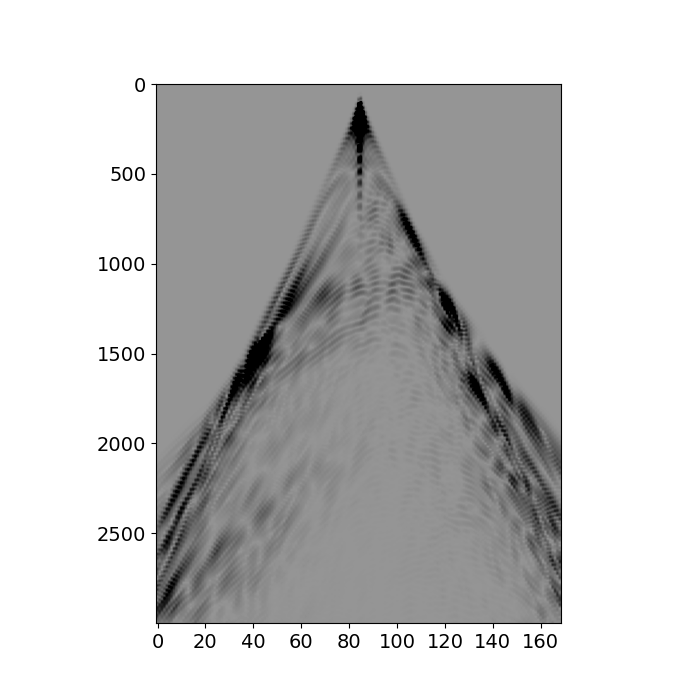} & \\
			{\small $\nu$} &
			\includegraphics[trim={2.5cm 1.3cm 3.4cm 2cm},clip,width=0.27\textwidth]{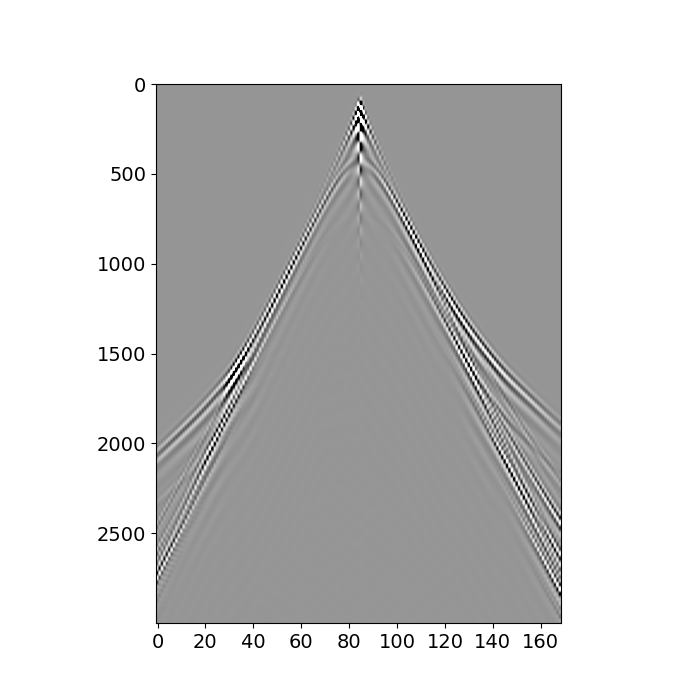} &
			\includegraphics[trim={2.5cm 1.3cm 3.4cm 2cm},clip,width=0.27\textwidth]{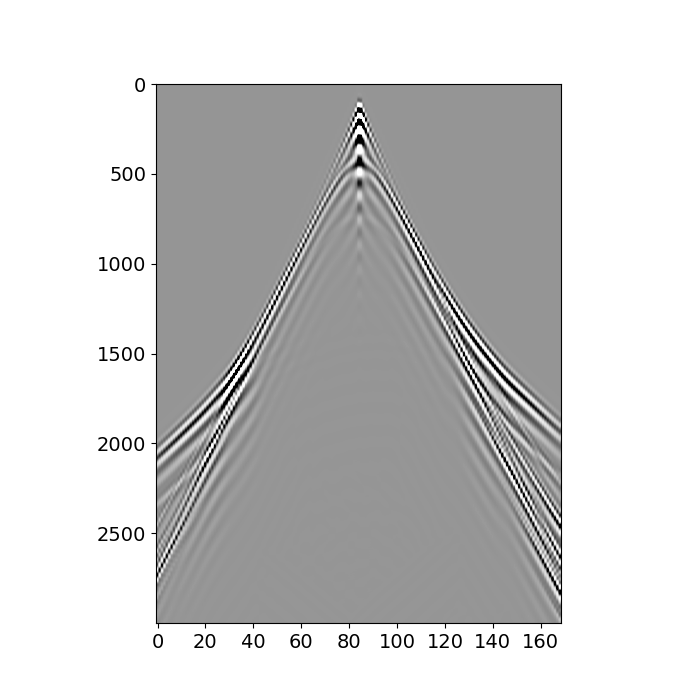} & \includegraphics[trim={2.5cm 1.3cm 3.4cm 2cm},clip,width=0.27\textwidth]{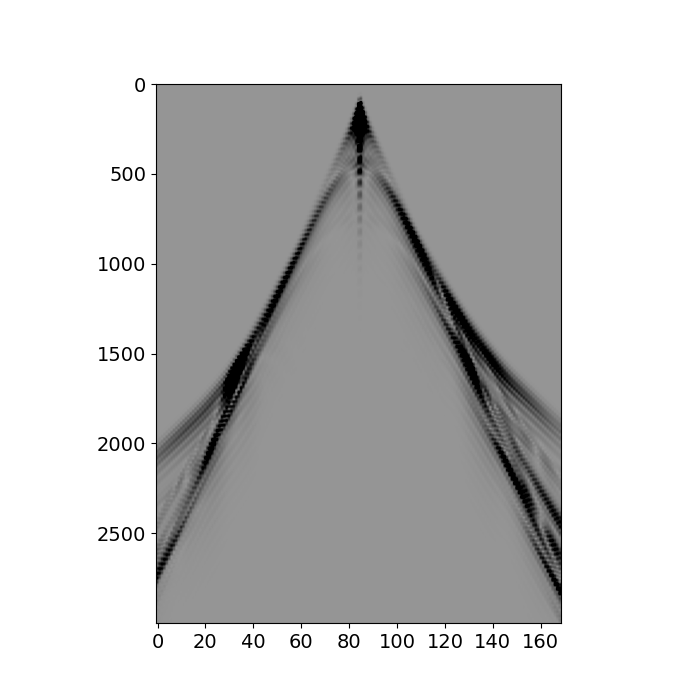} & \\
			{\small $\mu-\nu$} &
			\includegraphics[trim={2.5cm 1.3cm 3.4cm 2cm},clip,width=0.27\textwidth]{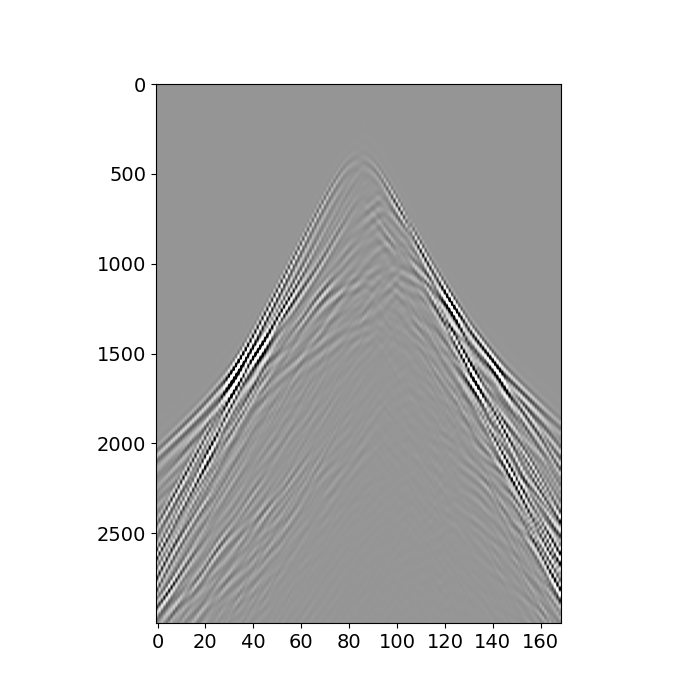} &
			\includegraphics[trim={2.5cm 1.3cm 3.4cm 2cm},clip,width=0.27\textwidth]{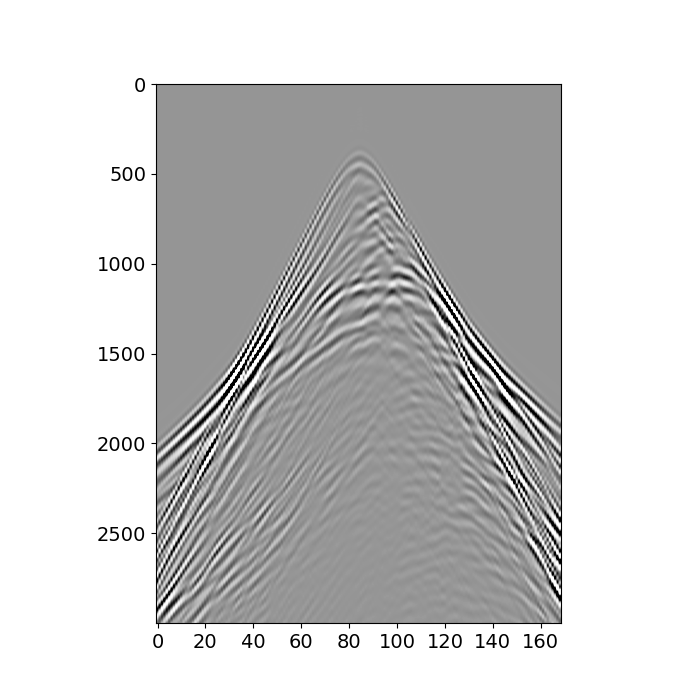} & \includegraphics[trim={2.5cm 1.3cm 3.4cm 2cm},clip,width=0.27\textwidth]{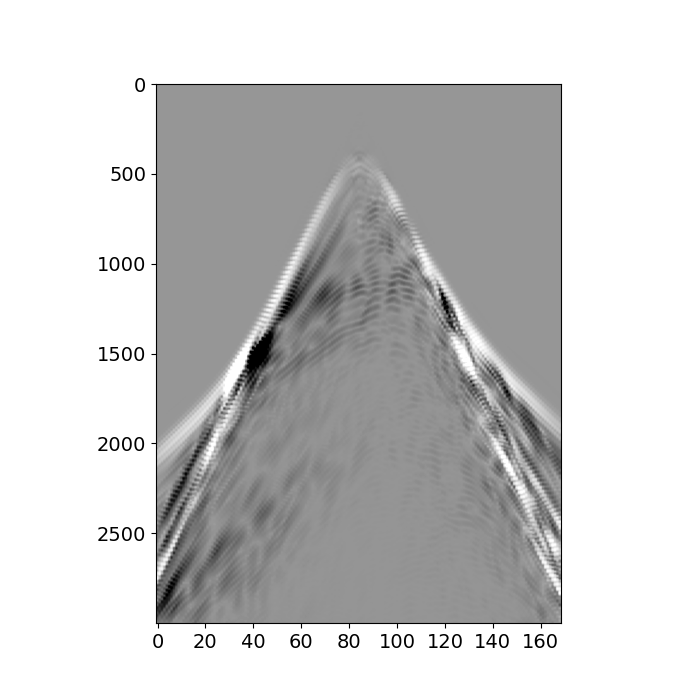} & \\
		    & {\small $v_x$} & {\small $v_z$} & {\small $\sqrt{v_x^2+v_z^2}$} & \\
		\end{tabular}
		\caption{Example of two-dimensional seismograms of particles' velocities generated with two different velocity models $c$, respectively the \textit{true model} and the \textit{initial model 3} represented in Figure \ref{fig:models}, lifted via \eqref{eq:Pauli} (see also Remark \ref{rmk:lorentz}). Recording of $3000$ regular time samples (y axis) from $169$ evenly spaced receivers (x axis). For the sake of the representation, data are clipped to $10\%$ of their maximum value and the color scale varies in each column.}
		\label{fig:seismograms}
	\end{figure}

	First of all notice that, in both case $q=1$ and $q=2$, the order of magnitude of the first two components of $\delta$ (Figures \ref{fig:delta_x} and \ref{fig:delta_z}) decreases for increasing values of $\lambda$. Mass is effectively transported on these components for increasing values of $\lambda$. On the contrary, the third component (Figure \ref{fig:delta_alpha}) does not change noticeably and for big values of $\lambda$ it dominates on the first two by several orders of magnitude. The difference of mass between $\mu$ and $\nu$ concentrates in fact in the third component, since the velocities $v_x$ and $v_z$ have nearly zero average. If $\lambda$ is too big, this component is therefore predominant in the transport problem and the misfit $\Tc_{p,q}^\lambda$ will have an undesired behavior, as mentioned in Section \ref{ssec:lambda}.

	For $q=1$, we can notice as expected that the mass can be transported only at a maximum distance, depending on the value of $\lambda$. For small values ($\lambda=10^{-3}\ell$ or $\lambda=10^{-2}\ell$), mass can be transported at very short distance and $\delta$ resembles the norm of the difference of the two measures, meaning that the model is close to an $L^q$ distance. The main structures are well transported instead for a sufficiently big value ($\lambda=10^{-1}\ell$). For $\lambda=\ell$ the mass is allowed to be transported everywhere in the domain (recall $\Omega=[0,\ell]^2$), however, due to the difference in total mass, $\delta$ is non zero.
	In this case, differently from the previous smaller values of $\lambda$, the third component of $\delta$ is signed and represents the unavoidable addition of mass.
	Increasing $\lambda$ does not change at this point the solution but only makes this latter term more predominant in the overall cost.
	
	On the other hand, the situation changes completely in the case $q=2$. No matter the value of $\lambda$, sufficiently big values of mass can always be transported at every distance, resulting in a rather diffused solution. 
	On the contrary, sufficiently small values of mass will not be transported, which is the reason why the third component of $\delta$ is not signed even for $\lambda=\ell$, differently from the case $q=1$. Transport of the whole amount of available mass can be recovered only in the limit $\lambda\rightarrow\infty$.
	For the small values of $\lambda$, $\delta$ resembles again the difference of the two measures, although more diffused than the case $q=1$ since the bigger values of mass have already started to match. For the bigger value $\lambda=10^{-1}\ell$ we can notice that more or less all the structures of the signals are matched. The biggest value $\lambda=\ell$ provides an extremely diffused solution corresponding to the residual mass (in the third component of $\delta$).
	
	In Figure \ref{fig:profiles} we show the convergence profiles of Algorithm \ref{alg:SDMM} for the different test cases here considered. In general, for the regimes of $\lambda$ of interest, few hundreds iterations are sufficient to reach convergence up to a tolerance $\varepsilon$ of $10^{-4}, 10^{-5}$. As one could expect, the case $(p,q)=(1,2)$ converges faster than the case $(p,q)=(1,1)$ thanks to the uniform convexity of the objective function, with better performance the smaller the parameter $\lambda$. For $(p,q)=(1,1)$ and excessively big values of $\lambda$, the convergence profiles are extremely oscillatory and rather slow.
	This probably depends on the fact that $\Tc_{1,1}^\lambda$ is closer to a total variation norm for these values of $\lambda$, whose minimization is more involved.
	The convergence could probably be improved via a careful tuning of the parameter $\tau$ in Algorithm \ref{alg:SDMM}. In these tests we considered $\tau=0.9$. However, we did not try to tune it since for the values of $\lambda$ of interest the algorithm already performs quite well.
	
	\begin{figure}
		\centering
		\setlength{\tabcolsep}{0pt}
		\renewcommand{\arraystretch}{0}
		\begin{tabular}{C{0.06\textwidth}C{0.22\textwidth}C{0.22\textwidth}C{0.22\textwidth}C{0.22\textwidth}C{0.06\textwidth}}
			&\includegraphics[trim={1.5cm 1.2cm 1cm 12.3cm},clip,width=0.22\textwidth]{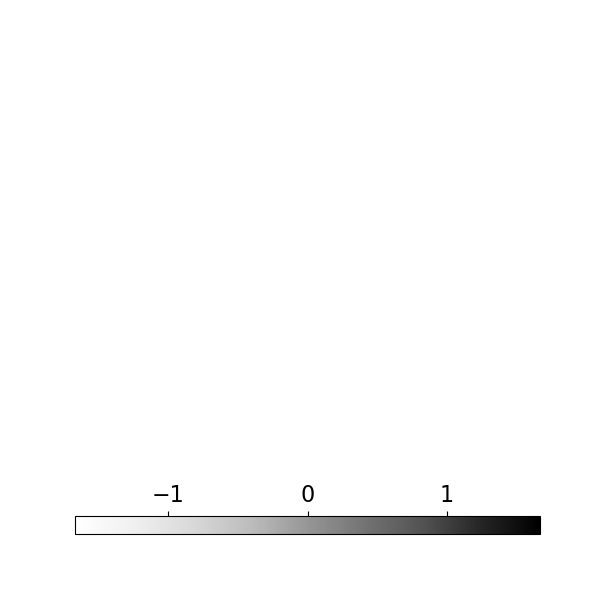} & 
			\includegraphics[trim={1.5cm 1.2cm 1cm 12.3cm},clip,width=0.22\textwidth]{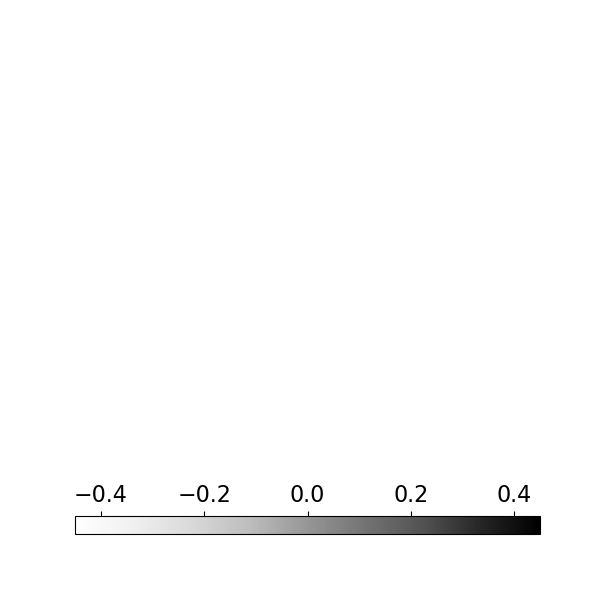} &
			\includegraphics[trim={1.5cm 1.2cm 1cm 12.3cm},clip,width=0.22\textwidth]{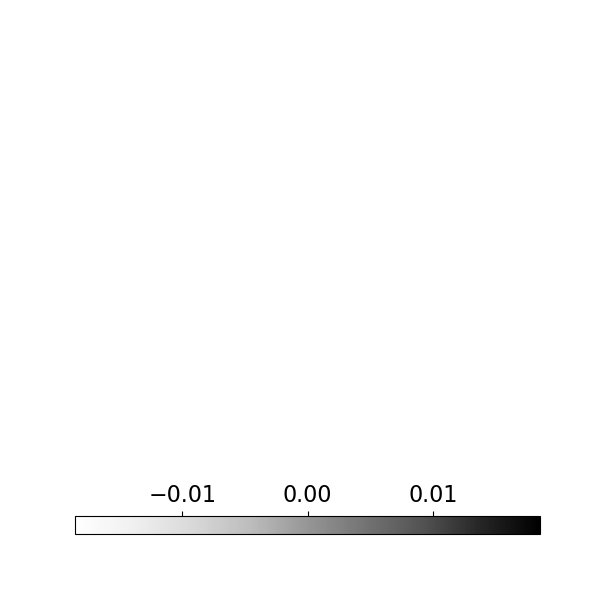} &
			\includegraphics[trim={1.5cm 1.2cm 0.94cm 12.3cm},clip,width=0.22\textwidth]{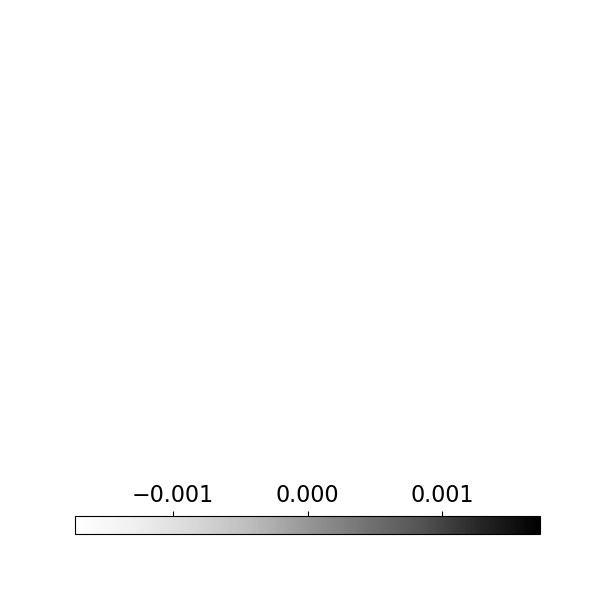} & \\
			{\small $\Tc_{1,1}^\lambda$} & \includegraphics[trim={3.5cm 1.5cm 3.05cm 1.7cm},clip,width=0.22\textwidth]{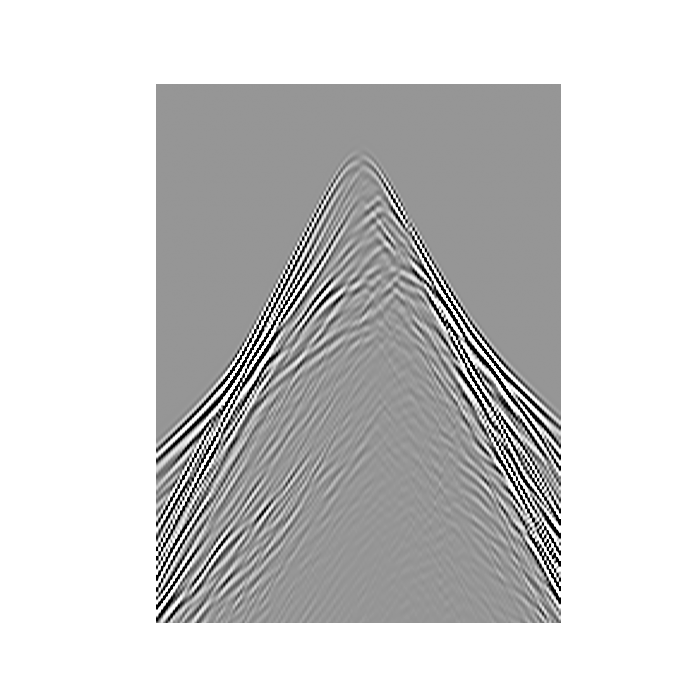} & 
			\includegraphics[trim={3.5cm 1.5cm 3.05cm 1.7cm},clip,width=0.22\textwidth]{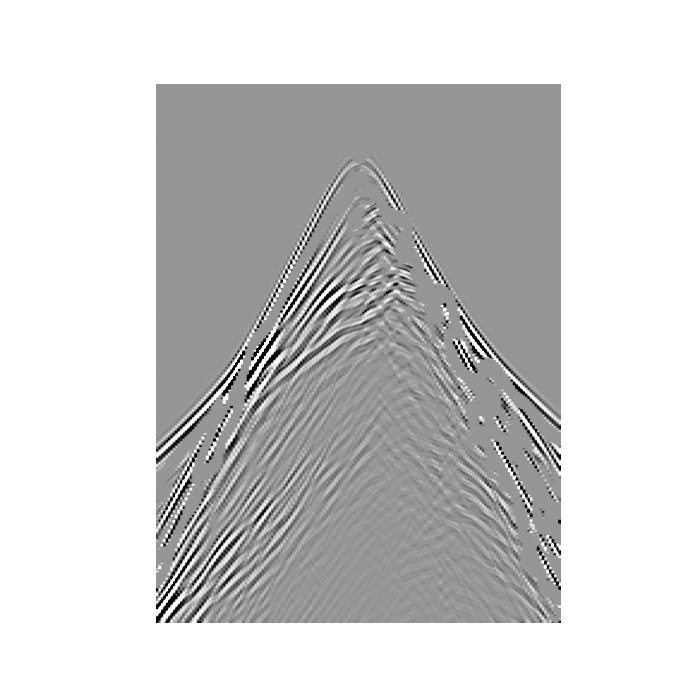} &
			\includegraphics[trim={3.5cm 1.5cm 3.05cm 1.7cm},clip,width=0.22\textwidth]{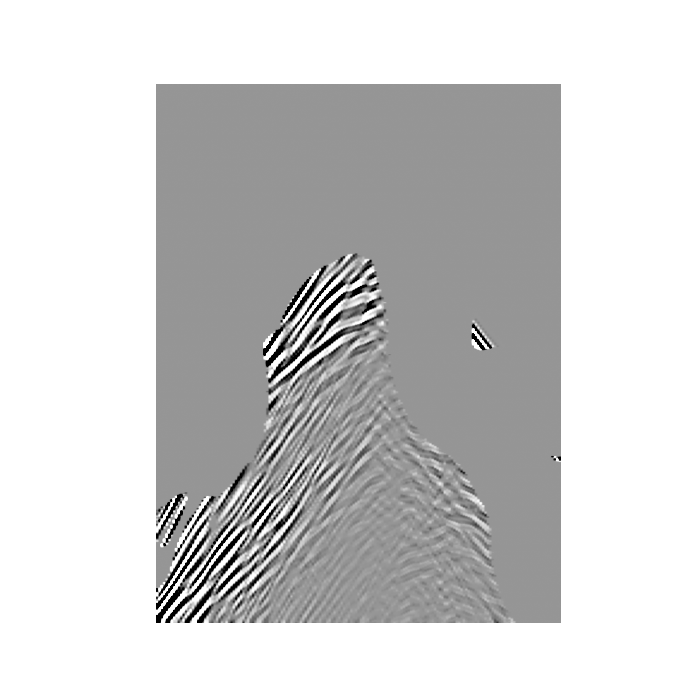} &
			\includegraphics[trim={3.5cm 1.5cm 3.05cm 1.7cm},clip,width=0.22\textwidth]{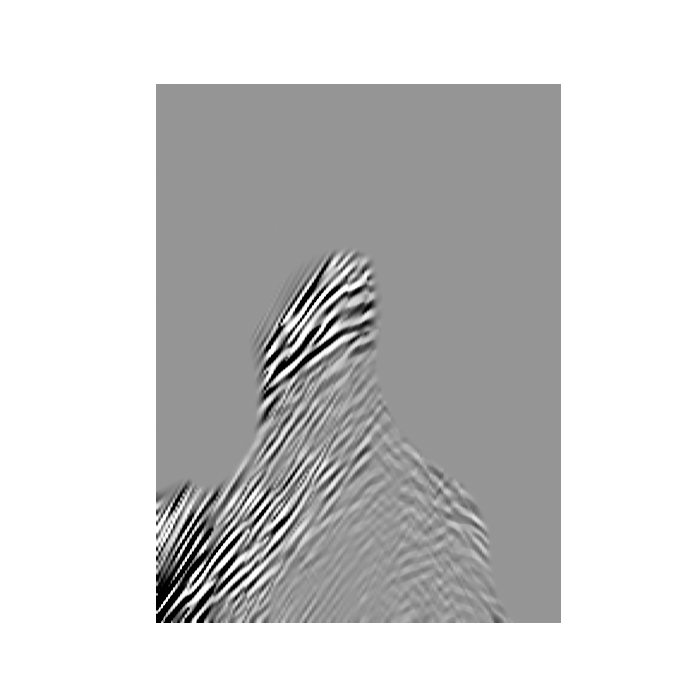} & \\
			{\small $\Tc_{1,2}^\lambda$}&\includegraphics[trim={3.5cm 1.5cm 3.05cm 1.7cm},clip,width=0.22\textwidth]{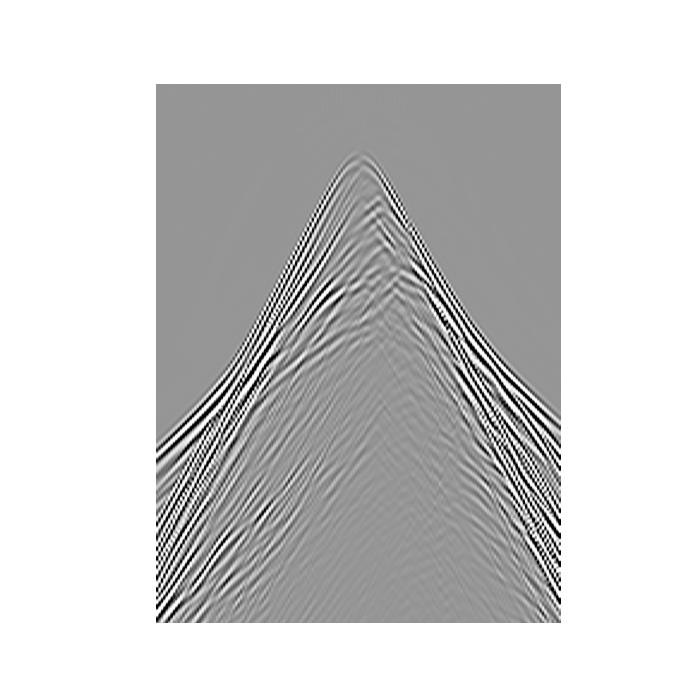} & \includegraphics[trim={3.5cm 1.5cm 3.05cm 1.7cm},clip,width=0.22\textwidth]{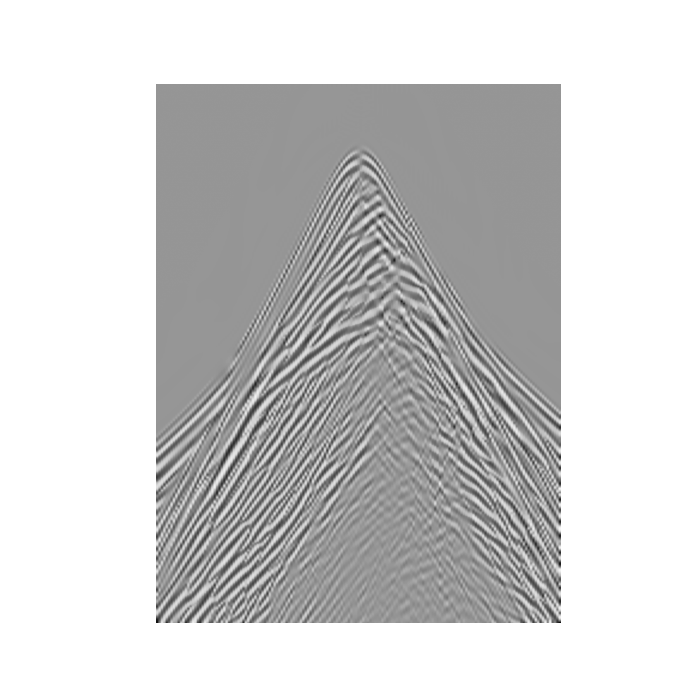} &
			\includegraphics[trim={3.5cm 1.5cm 3.05cm 1.7cm},clip,width=0.22\textwidth]{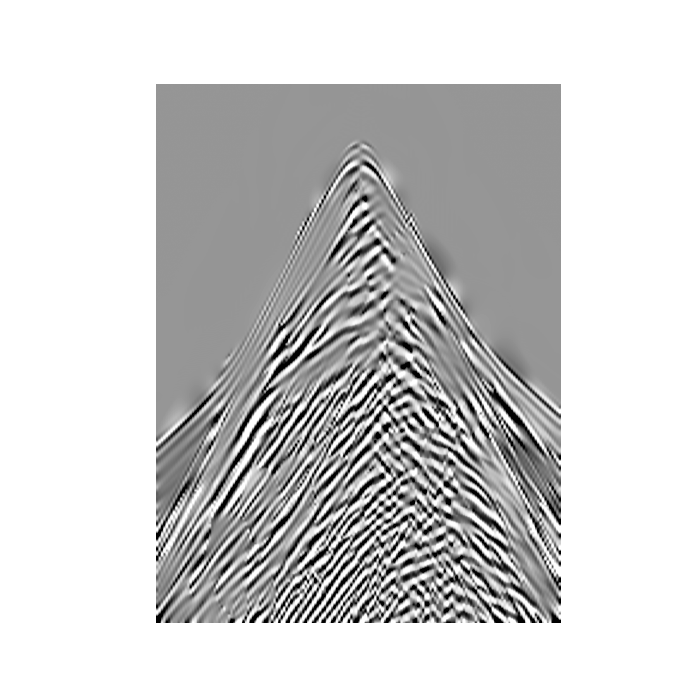} &
			\includegraphics[trim={3.5cm 1.5cm 3.05cm 1.7cm},clip,width=0.22\textwidth]{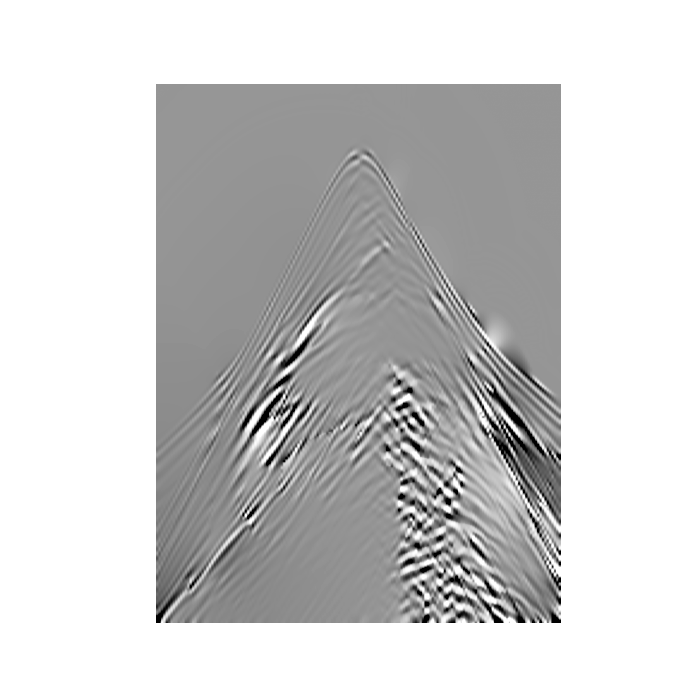} & \\
			& \vphantom{Space} & & & & \\
			& {\small $\lambda=10^{-3}\ell$} & {\small $\lambda=10^{-2}\ell$} & {\small $\lambda=10^{-1}\ell$} & {\small $\lambda=\ell$} & \\
		\end{tabular}
		\caption{Plot of the first component of $\delta$ solution to the unbalanced $L^1$ optimal transport problem between $\mu$ and $\nu$ from Figure \ref{fig:seismograms}, for different values of the parameter $\lambda$ (from first to fourth column), for $(p,q)=(1,1)$ (top row) and $(p,q)=(1,2)$ (bottom row). For the sake of the representation, data are clipped to $10\%$ of their maximum value and the color scale varies in each column.}
		\label{fig:delta_x}
	\end{figure}
	
	\begin{figure}
		\centering
		\setlength{\tabcolsep}{0pt}
		\renewcommand{\arraystretch}{0}
		\begin{tabular}{C{0.06\textwidth}C{0.22\textwidth}C{0.22\textwidth}C{0.22\textwidth}C{0.22\textwidth}C{0.06\textwidth}}
			&\includegraphics[trim={1.5cm 1.2cm 1cm 12.3cm},clip,width=0.22\textwidth]{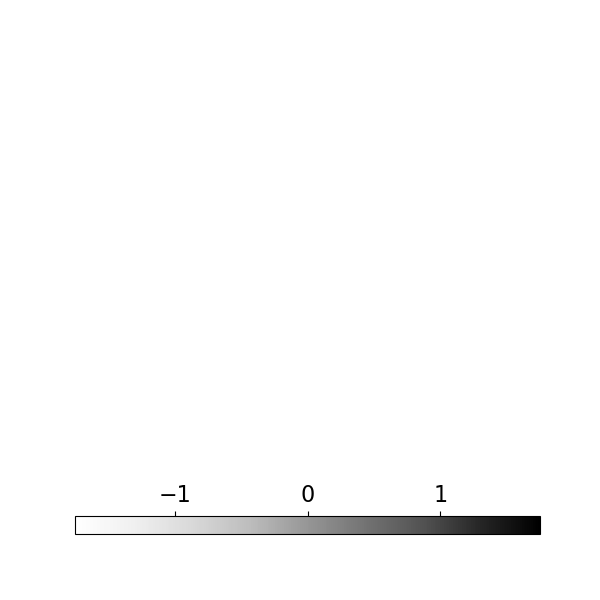} & 
			\includegraphics[trim={1.5cm 1.2cm 1cm 12.3cm},clip,width=0.22\textwidth]{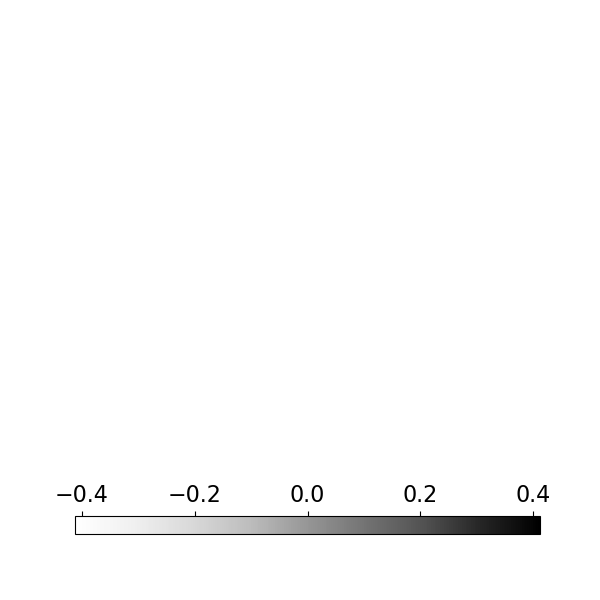} &
			\includegraphics[trim={1.5cm 1.2cm 1cm 12.3cm},clip,width=0.22\textwidth]{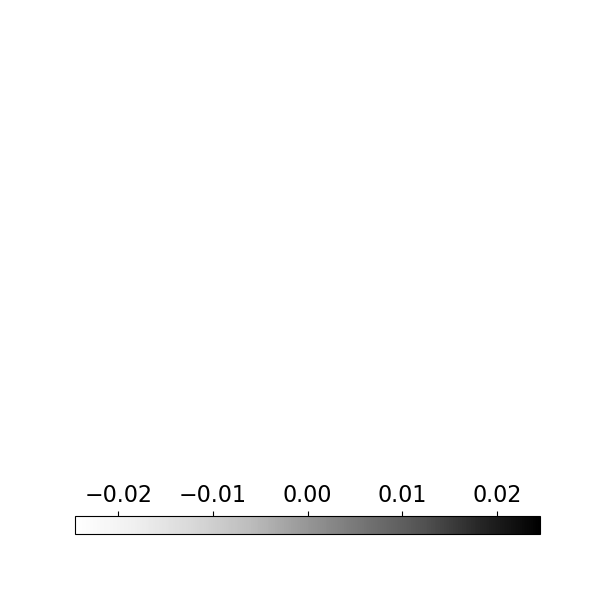} &
			\includegraphics[trim={1.5cm 1.2cm 0.94cm 12.3cm},clip,width=0.22\textwidth]{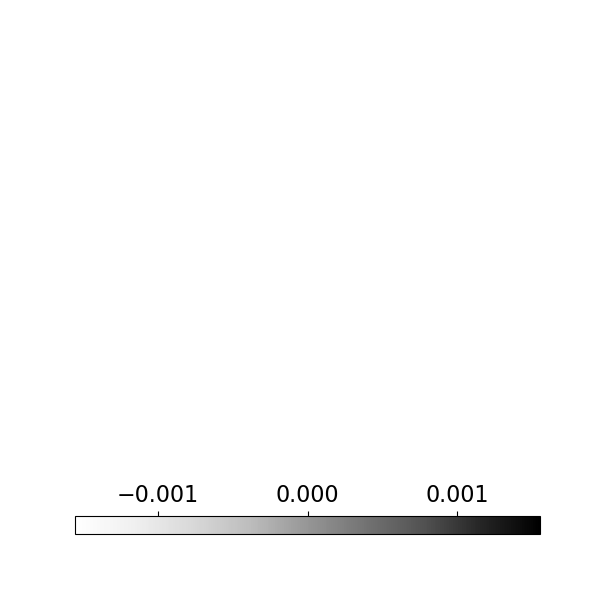} & \\
			{\small $\Tc_{1,1}^\lambda$} & \includegraphics[trim={3.5cm 1.5cm 3.05cm 1.7cm},clip,width=0.22\textwidth]{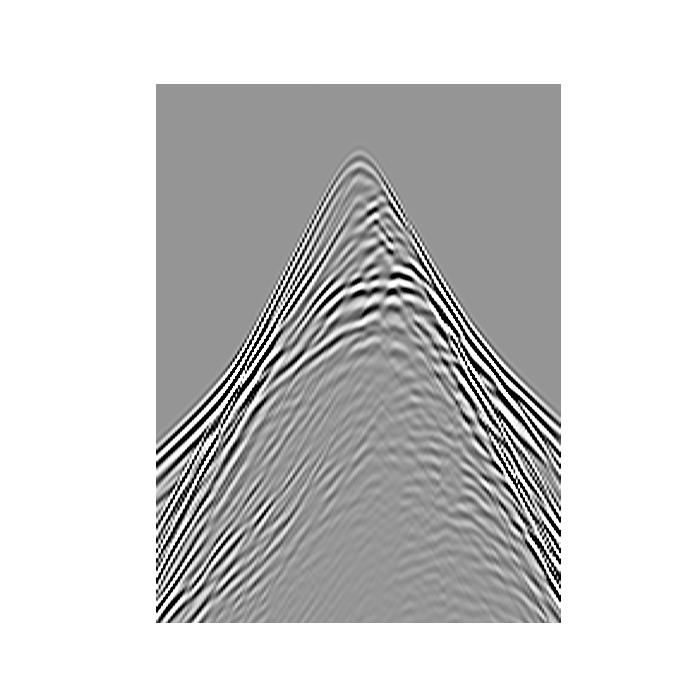} & 
			\includegraphics[trim={3.5cm 1.5cm 3.05cm 1.7cm},clip,width=0.22\textwidth]{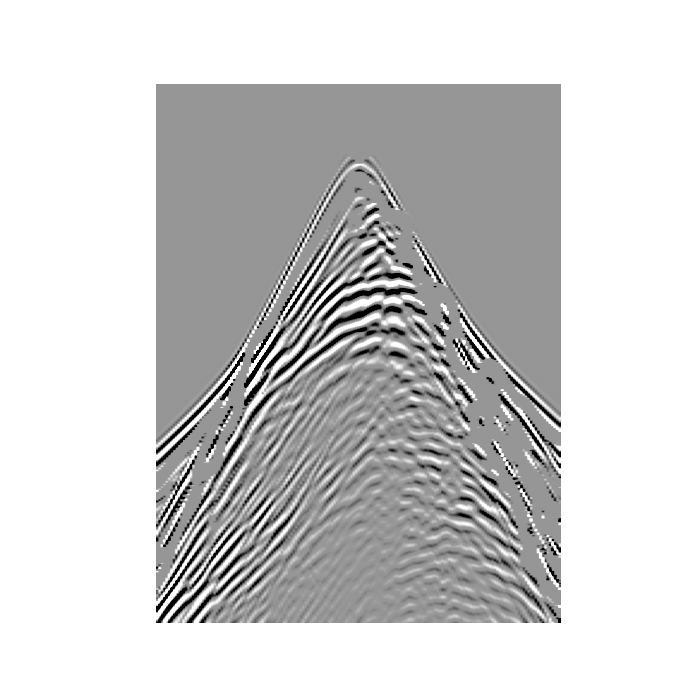} &
			\includegraphics[trim={3.5cm 1.5cm 3.05cm 1.7cm},clip,width=0.22\textwidth]{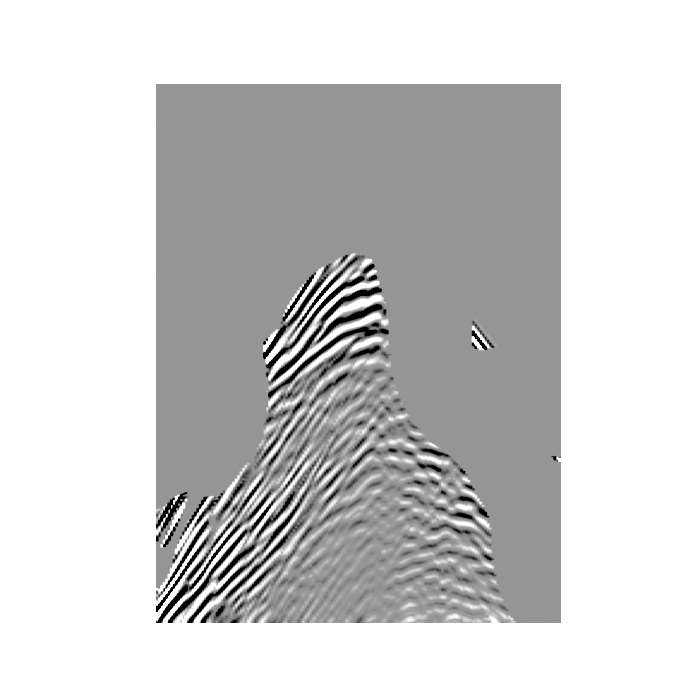} &
			\includegraphics[trim={3.5cm 1.5cm 3.05cm 1.7cm},clip,width=0.22\textwidth]{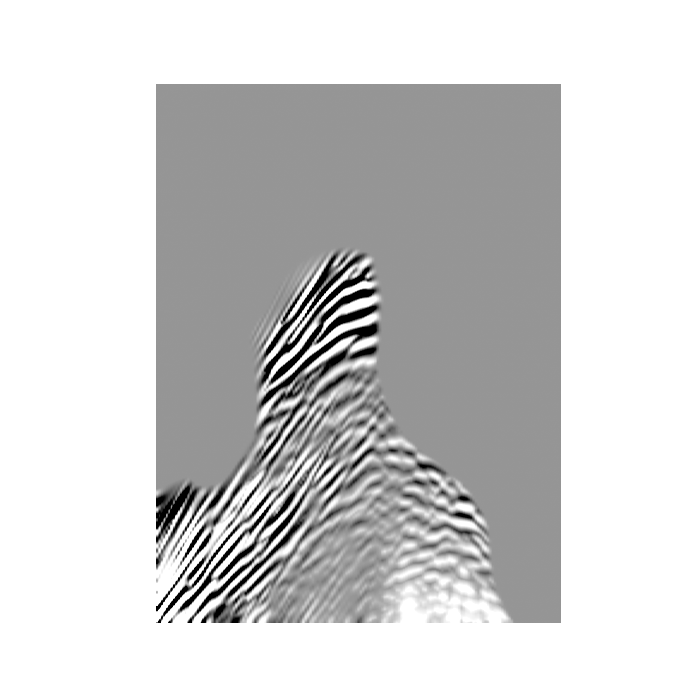} & \\
			{\small $\Tc_{1,2}^\lambda$}&\includegraphics[trim={3.5cm 1.5cm 3.05cm 1.7cm},clip,width=0.22\textwidth]{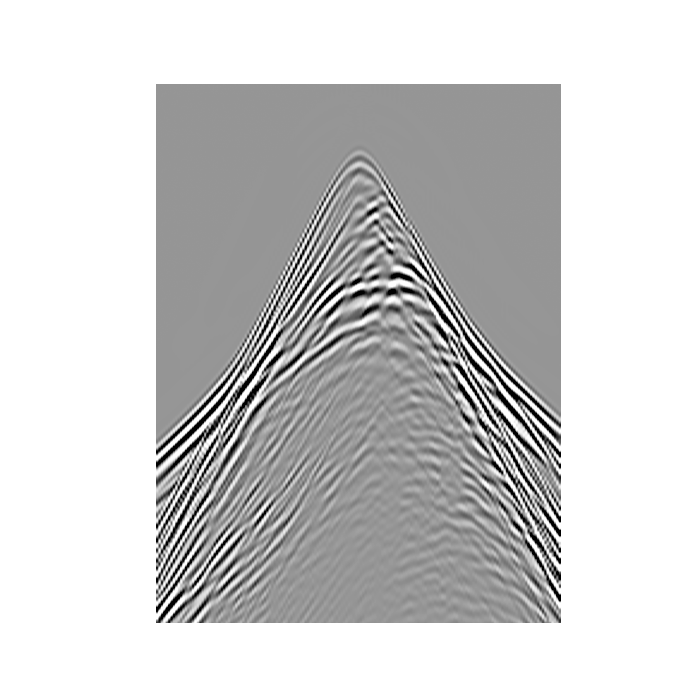} & \includegraphics[trim={3.5cm 1.5cm 3.05cm 1.7cm},clip,width=0.22\textwidth]{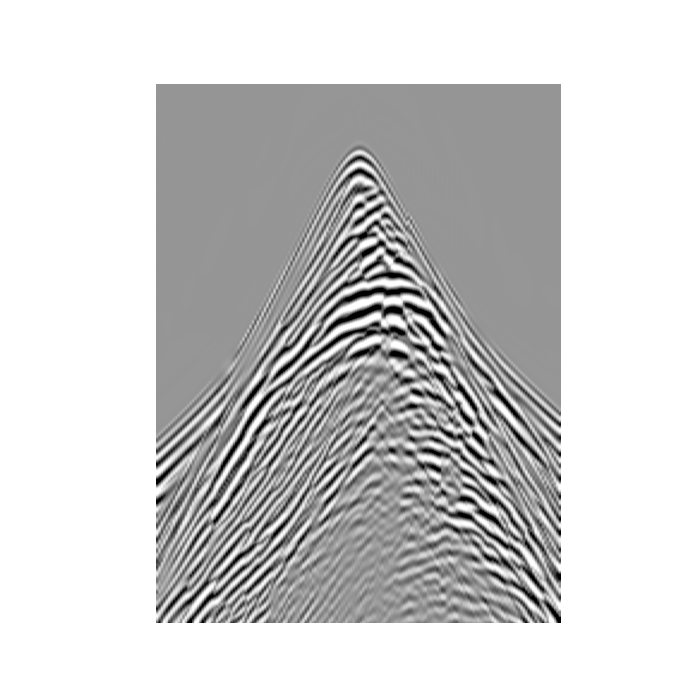} &
			\includegraphics[trim={3.5cm 1.5cm 3.05cm 1.7cm},clip,width=0.22\textwidth]{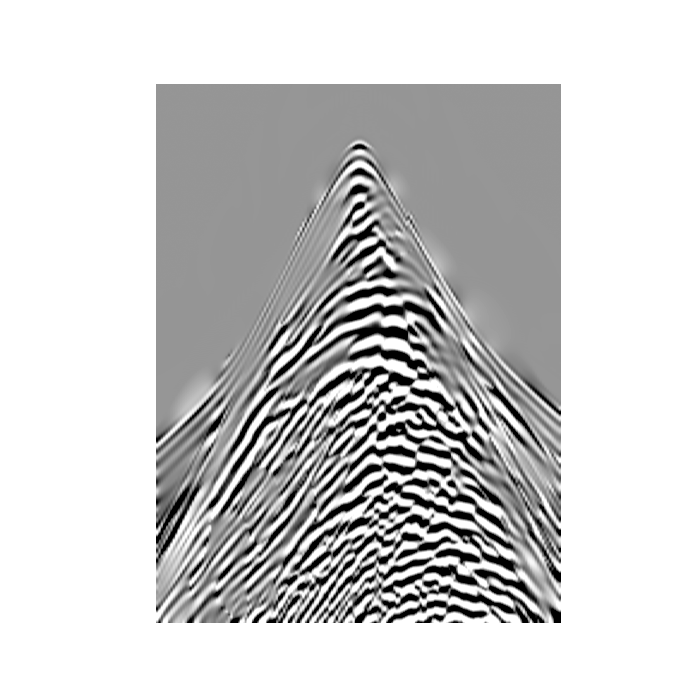} &
			\includegraphics[trim={3.5cm 1.5cm 3.05cm 1.7cm},clip,width=0.22\textwidth]{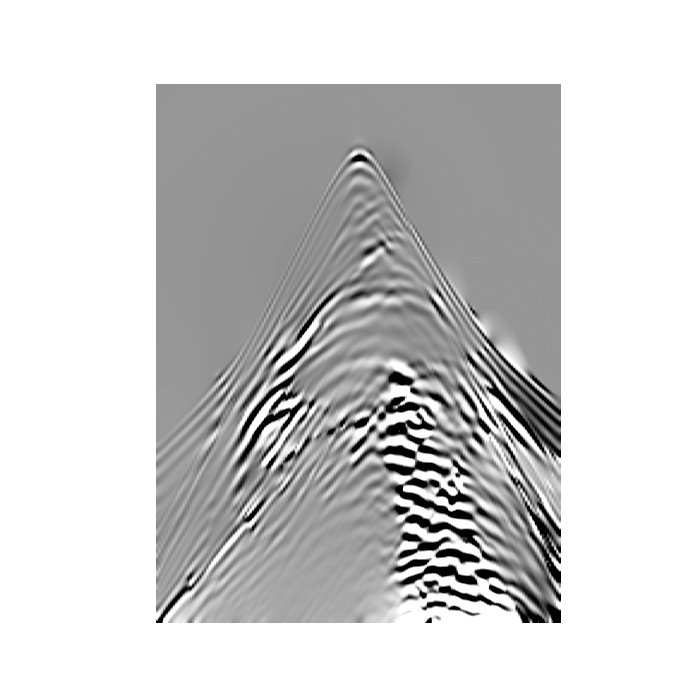} & \\
			& \vphantom{Space} & & & & \\
			& {\small $\lambda=10^{-3}\ell$} & {\small $\lambda=10^{-2}\ell$} & {\small $\lambda=10^{-1}\ell$} & {\small $\lambda=\ell$} & \\
		\end{tabular}
		\caption{Plot of the second component of $\delta$ solution to the unbalanced $L^1$ optimal transport problem between $\mu$ and $\nu$ from Figure \ref{fig:seismograms}, for different values of the parameter $\lambda$ (from first to fourth column), for $(p,q)=(1,1)$ (top row) and $(p,q)=(1,2)$ (bottom row). For the sake of the representation, data are clipped to $10\%$ of their maximum value and the color scale varies in each column.}
		\label{fig:delta_z}
	\end{figure}
	
	\begin{figure}
		\centering
		\setlength{\tabcolsep}{0pt}
		\renewcommand{\arraystretch}{0}
		\begin{tabular}{C{0.06\textwidth}C{0.22\textwidth}C{0.22\textwidth}C{0.22\textwidth}C{0.22\textwidth}C{0.06\textwidth}}
			&\includegraphics[trim={1.5cm 1.2cm 1cm 12.3cm},clip,width=0.22\textwidth]{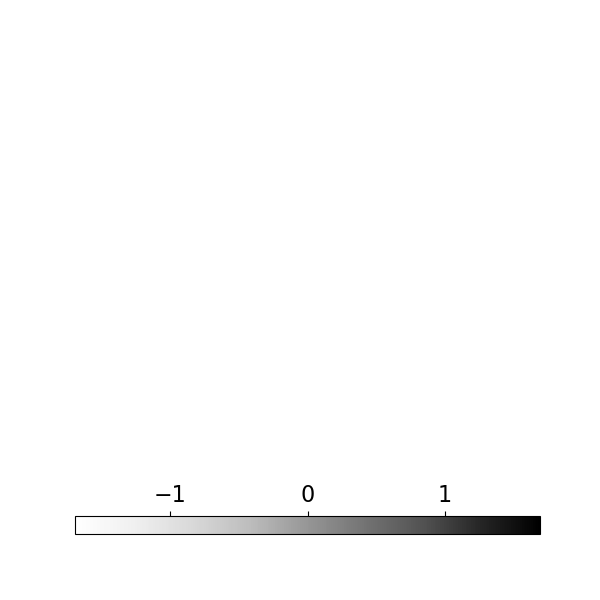} & 
			\includegraphics[trim={1.5cm 1.2cm 1cm 12.3cm},clip,width=0.22\textwidth]{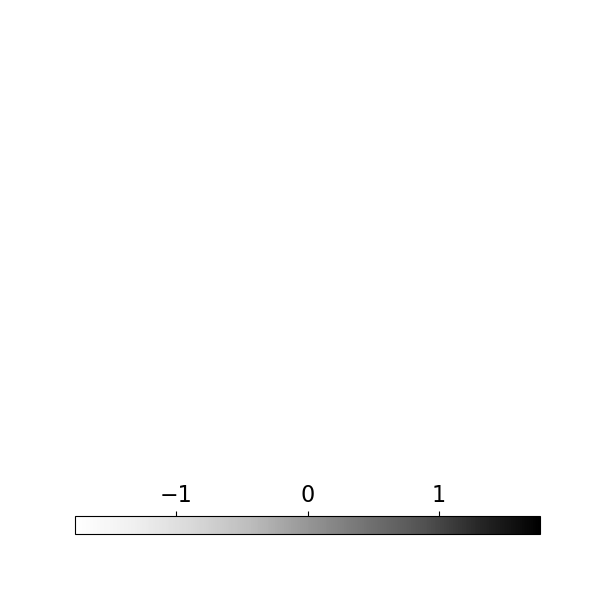} &
			\includegraphics[trim={1.5cm 1.2cm 1cm 12.3cm},clip,width=0.22\textwidth]{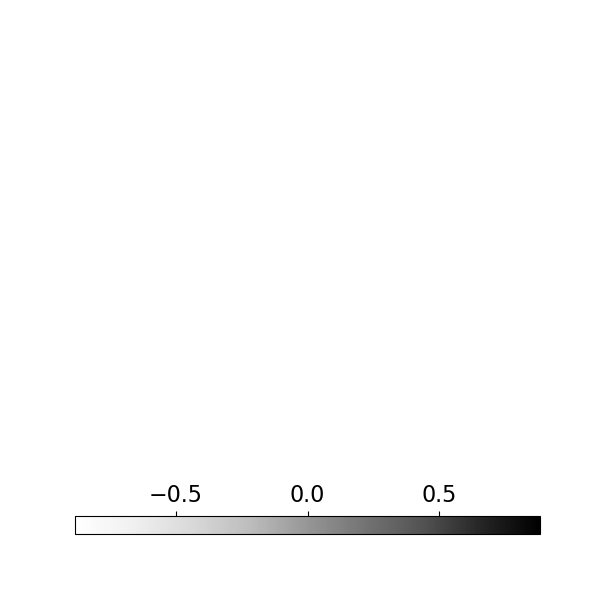} &
			\includegraphics[trim={1.5cm 1.2cm 0.94cm 12.3cm},clip,width=0.22\textwidth]{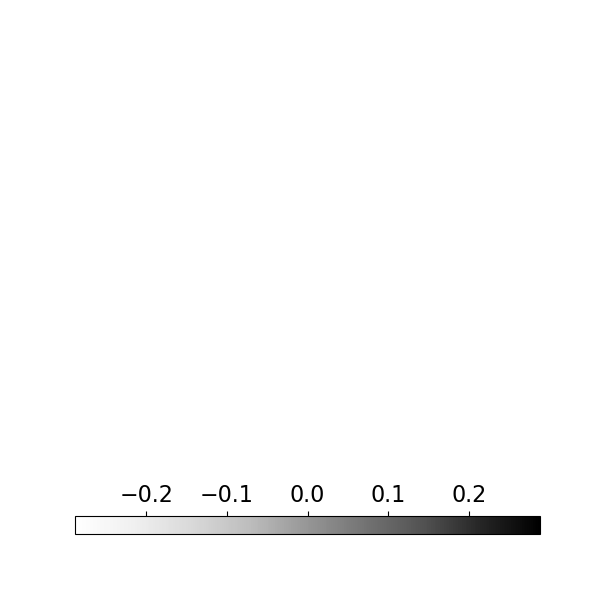} & \\
			{\small $\Tc_{1,1}^\lambda$} & \includegraphics[trim={3.5cm 1.5cm 3.05cm 1.7cm},clip,width=0.22\textwidth]{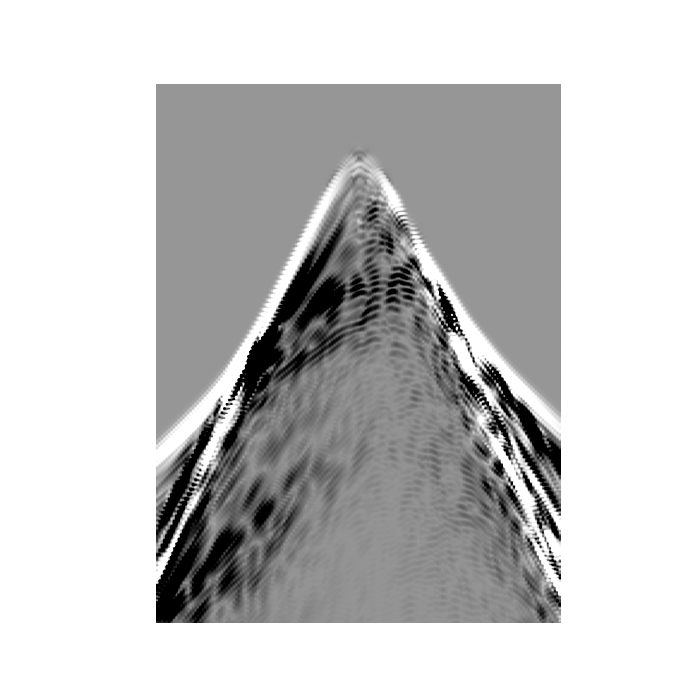} & 
			\includegraphics[trim={3.5cm 1.5cm 3.05cm 1.7cm},clip,width=0.22\textwidth]{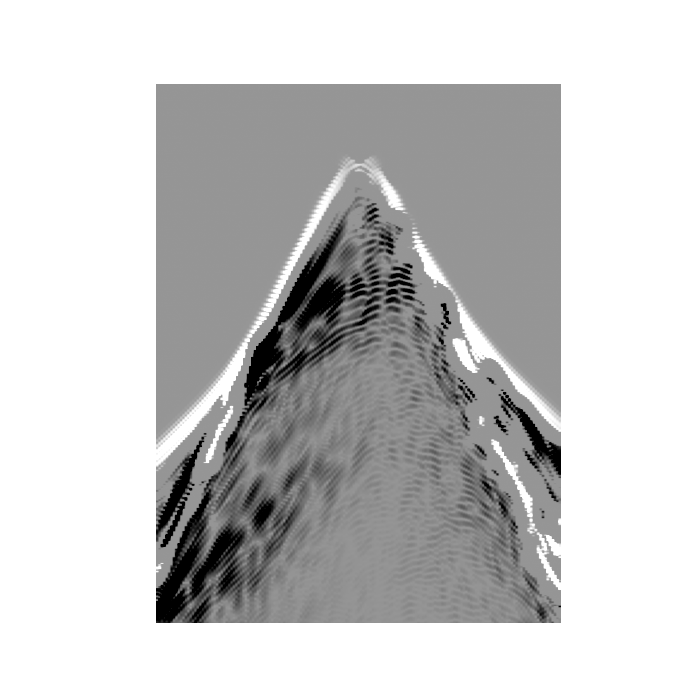} &
			\includegraphics[trim={3.5cm 1.5cm 3.05cm 1.7cm},clip,width=0.22\textwidth]{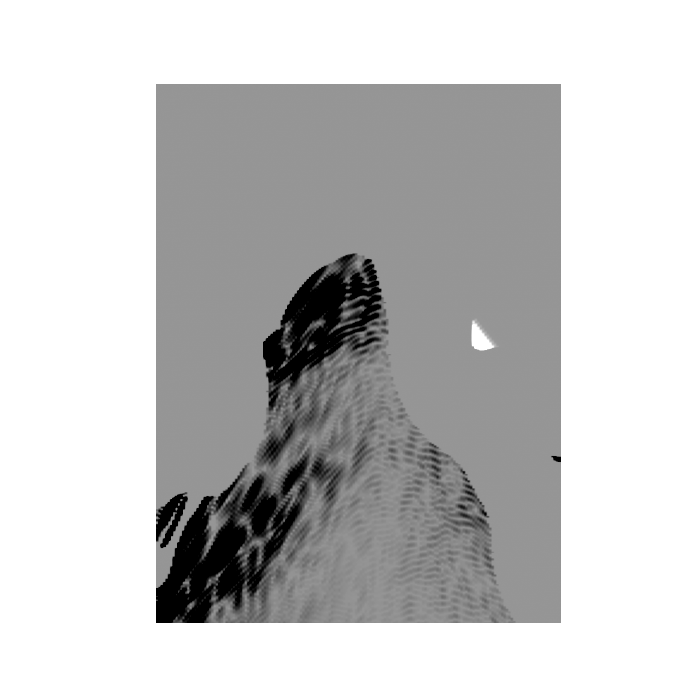} &
			\includegraphics[trim={3.5cm 1.5cm 3.05cm 1.7cm},clip,width=0.22\textwidth]{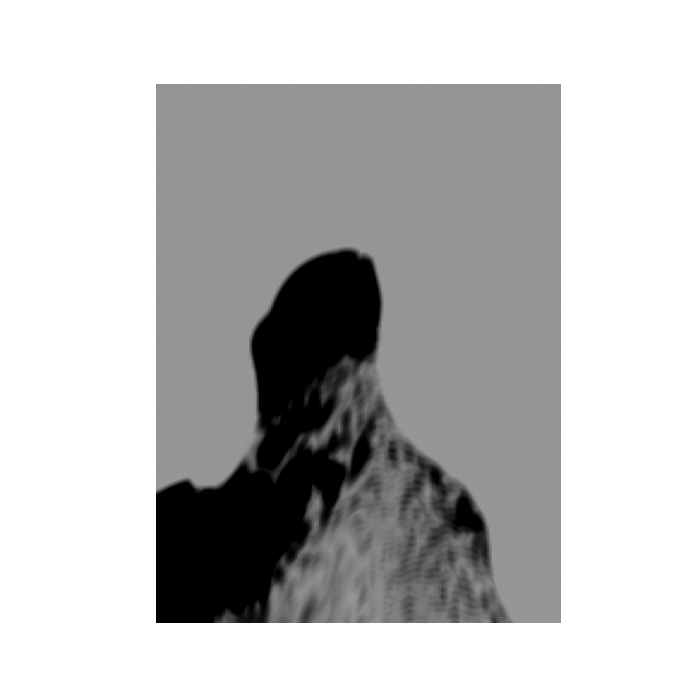} & \\
			{\small $\Tc_{1,2}^\lambda$}&\includegraphics[trim={3.5cm 1.5cm 3.05cm 1.7cm},clip,width=0.22\textwidth]{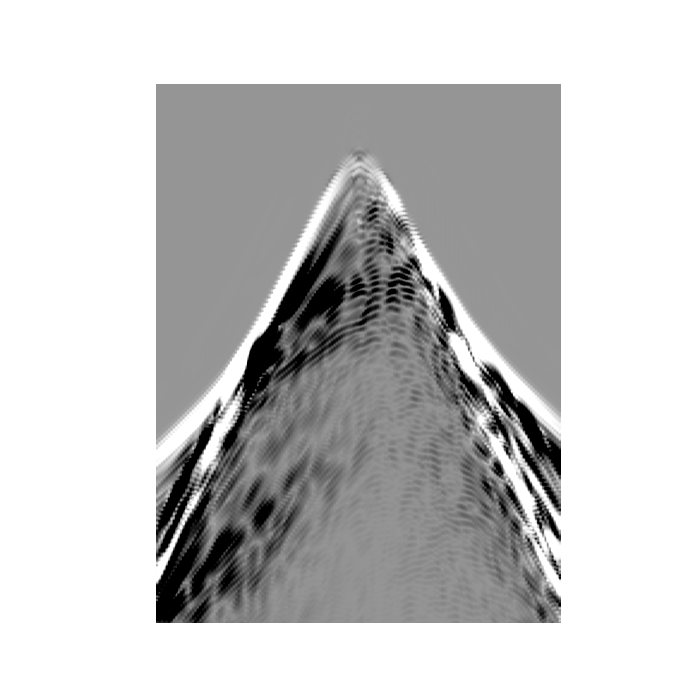} & \includegraphics[trim={3.5cm 1.5cm 3.05cm 1.7cm},clip,width=0.22\textwidth]{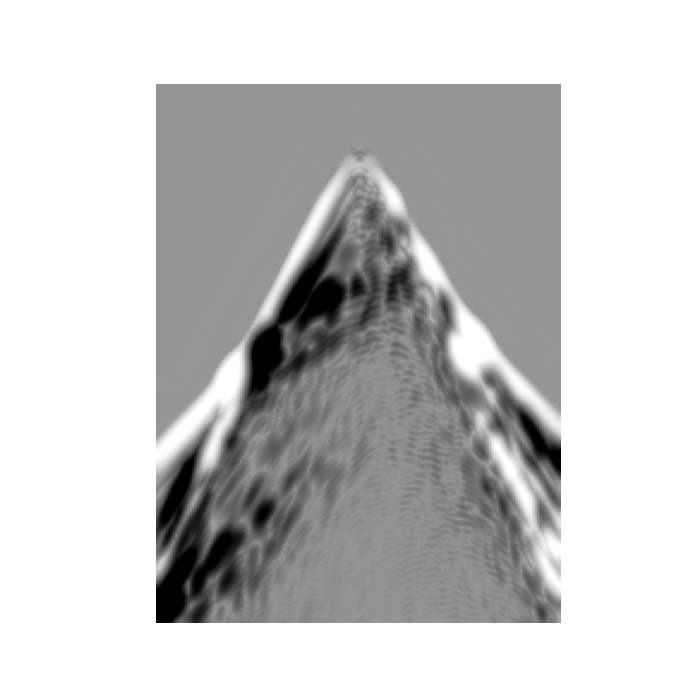} &
			\includegraphics[trim={3.5cm 1.5cm 3.05cm 1.7cm},clip,width=0.22\textwidth]{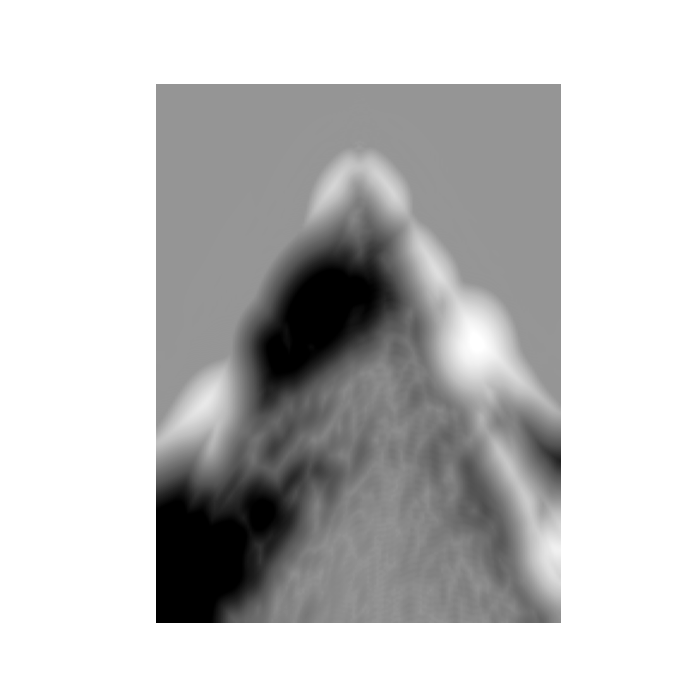} &
			\includegraphics[trim={3.5cm 1.5cm 3.05cm 1.7cm},clip,width=0.22\textwidth]{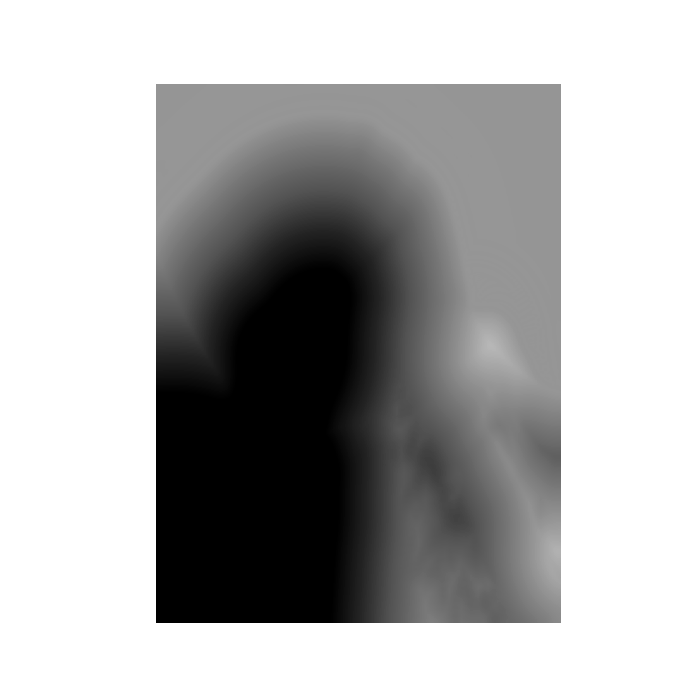} & \\
			& \vphantom{Space} & & & & \\
			& {\small $\lambda=10^{-3}\ell$} & {\small $\lambda=10^{-2}\ell$} & {\small $\lambda=10^{-1}\ell$} & {\small $\lambda=\ell$} & \\
		\end{tabular}
		\caption{Plot of the third component of $\delta$ solution to the unbalanced $L^1$ optimal transport problem between $\mu$ and $\nu$ from Figure \ref{fig:seismograms}, for different values of the parameter $\lambda$ (from first to fourth column), for $(p,q)=(1,1)$ (top row) and $(p,q)=(1,2)$ (bottom row). For the sake of the representation, data are clipped to $10\%$ of their maximum value and the color scale varies in each column.}
		\label{fig:delta_alpha}
	\end{figure}
	
	\begin{figure}
		\centering
		\subfloat[$\Tc_{1,1}^\lambda$]{\includegraphics[trim={0.5cm 0.3cm 1.2cm 1.1cm},clip,width=0.36\textwidth]{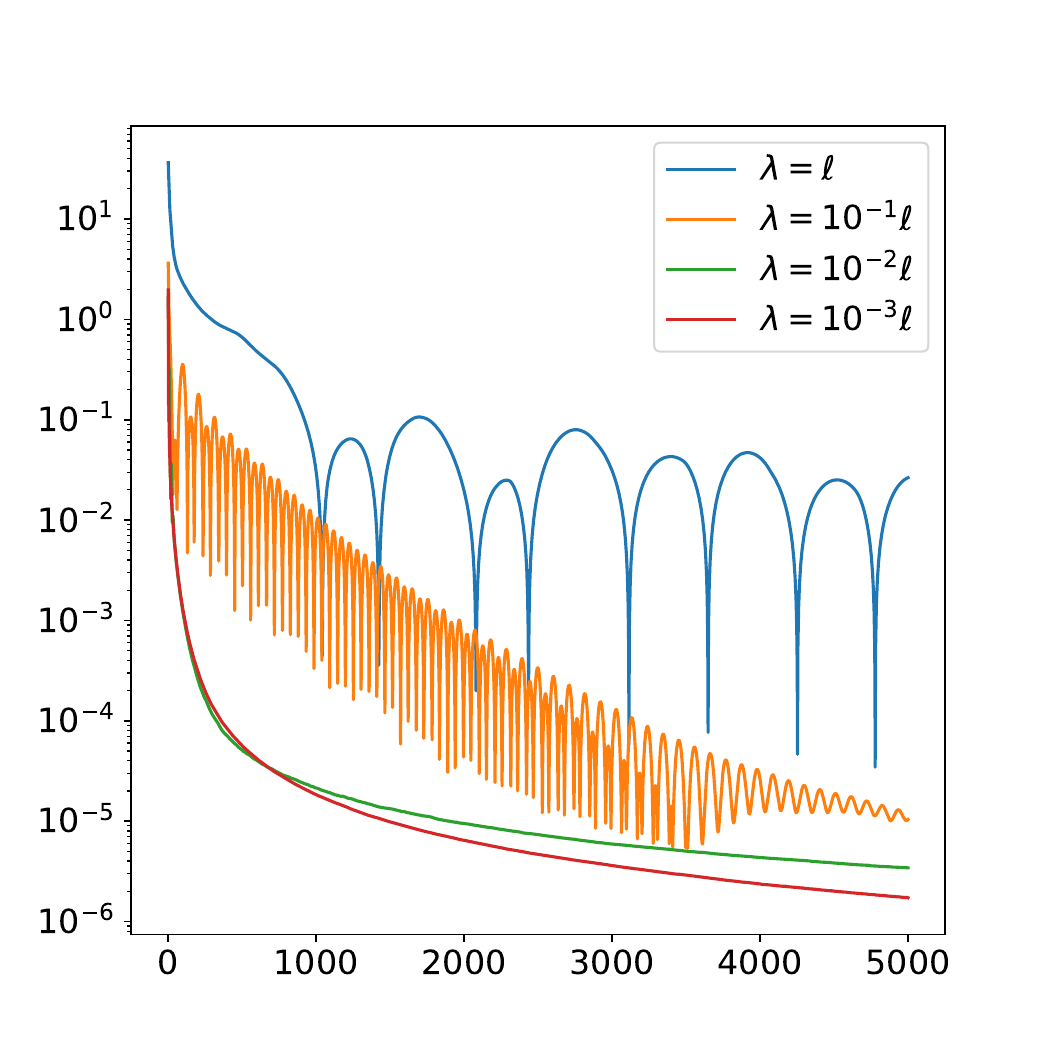}} \qquad
		\subfloat[$\Tc_{1,2}^\lambda$]{\includegraphics[trim={0.5cm 0.3cm 1.2cm 1.1cm},clip,width=0.36\textwidth]{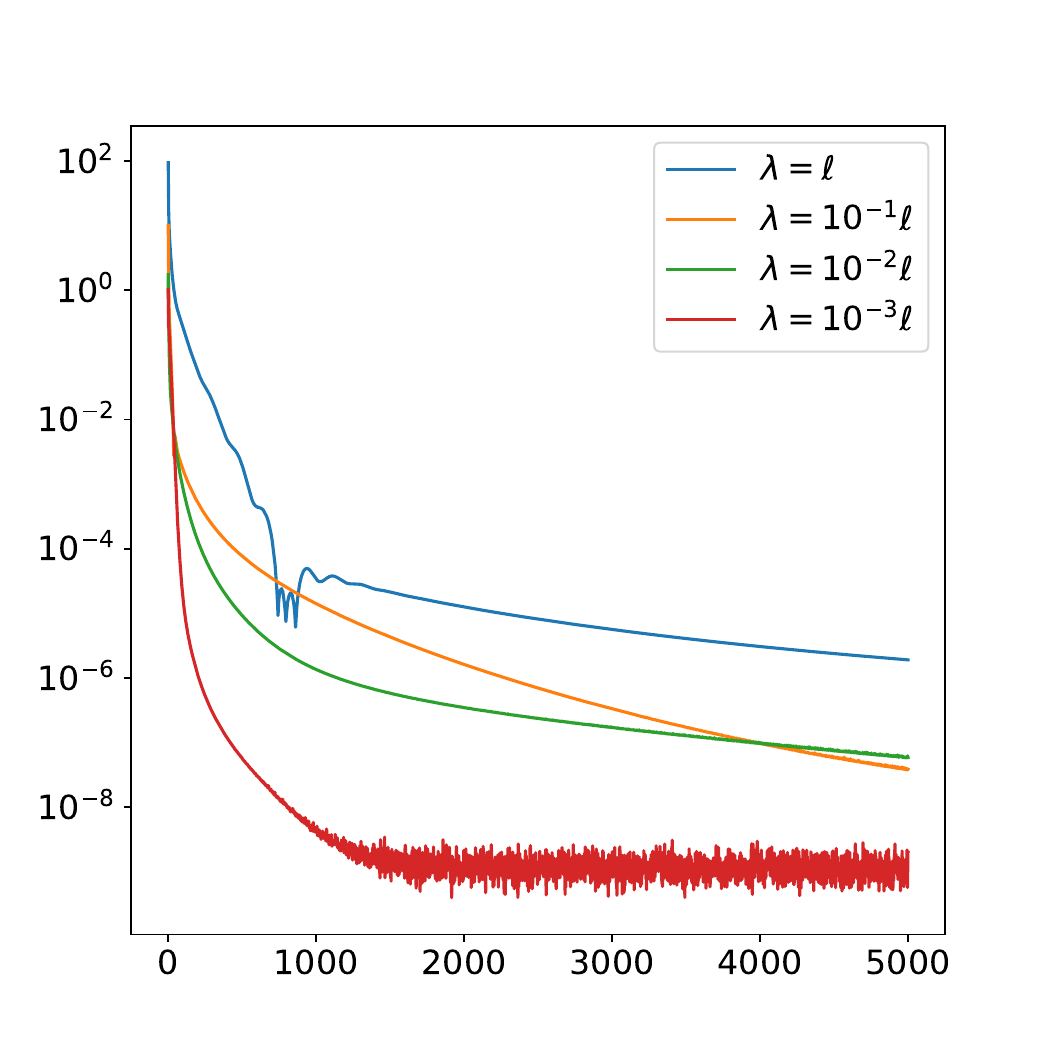}}
		\caption{Convergence profiles of algorithm \ref{alg:SDMM} for the error $\Delta^n+r^n$ in Remark \ref{rmk:stopping}, for the different test cases of Section \ref{ssec:seismo}.}
		\label{fig:profiles}
	\end{figure}
	
	\subsection{Application to a realistic data set}
	
	To validate our approach, we considered a realistic two-dimensional inversion test: the Marmousi case study \cite{martin2006marmousi2}. The setting is the same one as in Section \ref{ssec:seismo}. We consider in this case $N_s=124$ point sources, uniformly distributed along the $x$ axis at the same depth as the receivers, all producing the same Ricker signal. The true velocity model $c:D\rightarrow \R_+$ we aim at reconstructing is shown in Figure \ref{fig:models}.
	
	The tests have been performed using the code \textit{TOYxDAC\_time} \cite{yang2018time} of the \textit{SEISCOPE project}\footnote{https://seiscope2.osug.fr/} where the new misfit functions proposed in this article have been integrated. The acoustic wave equation is discretized using a finite difference approach. The optimization problem \eqref{eq:FWI} is solved with a gradient based l-BFGS approach \cite{nocedal1980updating}. The computation of the gradient is obtained via the \textit{adjoint state method} \cite{plessix2006review}. We considered a tolerance $\varepsilon=10^{-2}$ in Algorithm \ref{alg:SDMM}, according to the stopping criterion presented in Remark \ref{rmk:stopping}. This relatively big value allows us to solve each optimization problem in a small number of iterations and to limit in this way the computational cost for evaluating the misfit function. We have not experienced a sensitivity of the results of the test to considering smaller values of $\varepsilon$.
	
	\subsubsection{Objective gradient via adjoint state method}
	
	We present the method restricting to one source only for simplicity, $f$. We rewrite for the sake of notation the total cost as
	\begin{equation}\label{eq:obj_fwi}
		\Tc_{p,q}^\lambda(L(Rv_x[c],Rv_z[c]),L(v_x^{obs},v_z^{obs}))=g(v_x[c],v_z[c]) \,.
	\end{equation}
	The relation between $c$ and $v_x,v_z$ is given as a forward linear equation and can be seen as the constraint $F(c,v_x,v_z,p)=0$, where
	\begin{equation}\label{eq:sys_forward}
		\begin{cases}
			F_{v_x}(c,v_x,v_z,p)= \partial_t v_x-\frac{1}{\rho}\partial_x p=0 \,,\\
			F_{v_z}(c,v_x,v_z,p)=\partial_t v_z-\frac{1}{\rho}\partial_z p = 0 \,,\\
			F_p(c,v_x,v_z,p)=\partial_t p-\rho c^2 \div(v) -f= 0\,.\\
		\end{cases}
	\end{equation}
	Considering then the velocities $v_x, v_z$ and the pressure $p$ as variables in the optimization problem, we can introduce the Lagrangian functional
	\begin{equation}\label{eq:lagrangian_fwi}
		\mathcal{L}(c,v_x,v_z,p,\lambda)=g(v_x[c],v_z[c]) + \langle F(c,v_x,v_z,p),\lambda\rangle_D \,,
	\end{equation}
	where $\lambda=(\lambda_{v_x},\lambda_{v_z},\lambda_p)$ is the Lagrangian multiplier associated with the constraints and $\langle \cdot, \cdot \rangle_D$ is an appropriate scalar product in the model space. If we denote $v_x[c],v_z[c], p[c]$ the solution to the equation $F(c,v_x,v_z,p)=0$ for a given velocity model $c$, then
	\[
	\mathcal{L}(c,v_x[c],v_z[c],p[c],\lambda)=g(v_x[c],v_z[c])
	\]
	and therefore
	\[
	\begin{aligned}
		\frac{\d g(v_x[c],v_z[c]) }{\d c} =& \partial_{c} \mathcal{L}(c,v_x[c],v_z[c],p[c],\lambda)+\partial_{v_x}\mathcal{L}(c,v_x[c],v_z[c],p[c],\lambda) \frac{\d v_x[c]}{\d c} \\
		&+\partial_{v_z} \mathcal{L}(c,v_x[c],v_z[c],p[c],\lambda)\frac{\d v_z[c]}{\d c} +\partial_{p} \mathcal{L}(c,v_x[c],v_z[c],p[c],\lambda) \frac{\d p[c]}{\d c}\,.
	\end{aligned}
	\]
	The adjoint state method consists in computing a specific multiplier $\bar{\lambda}[c]$ such that the partial derivatives with respect to $v_x, v_z$ and $p$ of the Lagrangian function vanish. In this way one obtains the derivative of the objective function simply as 
	\[
	\frac{\d g(v_x[c],v_z[c]) }{\d c} = \partial_{c} \mathcal{L}(c,v_x[c],v_z[c],p[c],\bar{\lambda})= \partial_{c} \langle F(c,v_x,v_z,p), \bar{\lambda} \rangle_D \,,
	\]
	avoiding the expensive computation of the terms $\frac{\d v_x[c]}{\d c},\frac{\d v_z[c]}{\d c}$ and $\frac{\d p[c]}{\d c}$. From the definition of the Lagrangian function \eqref{eq:lagrangian_fwi} and by the definition of the operator $F(c,v_x,v_z,p)$, $\bar{\lambda}$ can be computed as solution to:
	\begin{equation}\label{eq:sys_adjoint}
		\left\{
		\begin{aligned}
			& \partial_{v_x}\langle F^T(c,v_x,v_z,p), \bar{\lambda}\rangle_D = - \partial_t \bar{\lambda}_{v_x} + c^2\rho \partial_x \bar{\lambda}_p  = -\frac{\partial g(v_x[c],v_z[c]) }{\partial v_x}  \\
			&\partial_{v_z}\langle F^T(c,v_x,v_z,p), \bar{\lambda}\rangle_D = - \partial_t \bar{\lambda}_{v_z} + c^2\rho \partial_z \bar{\lambda}_p = -\frac{\partial g(v_x[c],v_z[c]) }{\partial v_z}  \\
			&\partial_{p}\langle F^T(c,v_x,v_z,p), \bar{\lambda}\rangle_D = -\partial_t \bar{\lambda}_p + \frac{1}{\rho} (\partial_x \bar{\lambda}_{v_x} + \partial_z \bar{\lambda}_{v_z}) = 0\,.
		\end{aligned}
		\right.
	\end{equation}
	System \eqref{eq:sys_adjoint} is the system of adjoint equations with respect to \eqref{eq:sys_forward} and corresponds to a backward acoustic wave equation.
	
	From \eqref{eq:obj_fwi} and our definition of the cost function, taking advantage of the dual formulation (\eqref{eq:KRvect}, \eqref{eq:dualvect_unbalq2} and \eqref{eq:H-1} for the three different cases considered) which implies that $\Tc_{p,q}^{\lambda}$ is a supremum of linear functionals in the measures, hence convex and almost everywhere differentiable, we can write
	\begin{equation}\label{eq:sys_adjoint_sources}
	\begin{gathered}
		\frac{\partial \Tc_{p,q}^{\lambda}(L(Rv_x[c],Rv_z[c]),L(v_x^{obs},v_z^{obs})) }{\partial v_x}=R^T \langle \partial_{v_x} L(Rv_x[c],Rv_z[c])) ,\bar{\phi} \rangle_{\V}\,, \\
		\frac{\partial \Tc_{p,q}^{\lambda}(L(Rv_x[c],Rv_z[c]),L(v_x^{obs},v_z^{obs})) }{\partial v_z}=R^T \langle \partial_{v_z} L(Rv_x[c],Rv_z[c]))^T, \bar{\phi} \rangle_{\V}\,,
	\end{gathered}
	\end{equation}
	where $\bar{\phi}$ is an (the) optimal potential for the dual formulation.
	
	The computation of the gradient of the objective function in \eqref{eq:FWI} requires solving each time the backward acoustic wave equation \eqref{eq:sys_adjoint} with sources \eqref{eq:sys_adjoint_sources}. The forward solution and the optimal potential are already provided by the evaluation of the objective function. 
	
	\subsubsection{Results}
	 
	We solve the problem for different values of the parameter $(p,q)$ in the misfit $\Tc_{p,q}^\lambda$ and also compare with other misfit functions.
	We solve the inverse problem testing several initial models (Figure \ref{fig:models}) of increasing difficulty, that is further and further away from the optimum we want to reconstruct. The objective is to measure the robustness of the inverse problem, that is the possibility to retrieve the minimum with less and less initial information. This is related to the convexity of the misfit function around the minimum. We do not consider in the comparison the computational cost related to the different misfit. The parameter $\lambda$ is tuned manually to obtain the best performance, taking into account the considerations above.
	
	\begin{figure}
		\centering
		\setlength{\tabcolsep}{5pt}
		\renewcommand{\arraystretch}{1.3}
		\begin{tabular}{C{0.33\textwidth}C{0.33\textwidth}}
			{\small True model}& {\small Initial model 1} \\
			\includegraphics[width=0.33\textwidth]{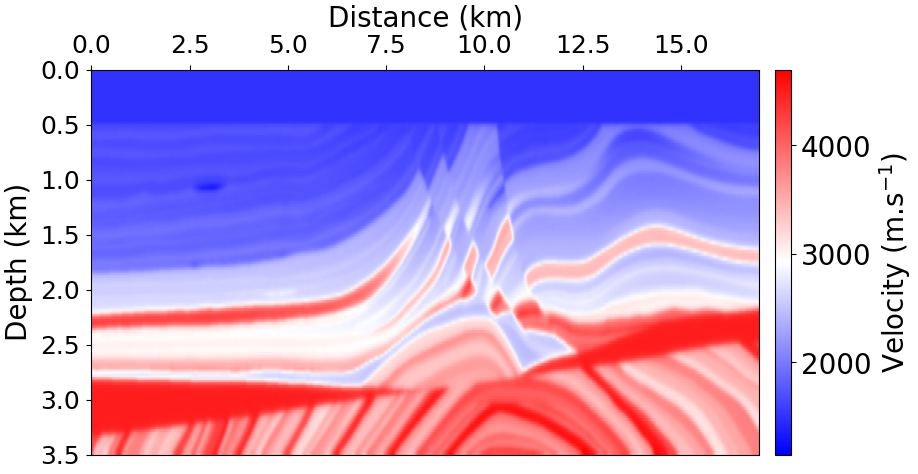} &
			\includegraphics[width=0.33\textwidth]{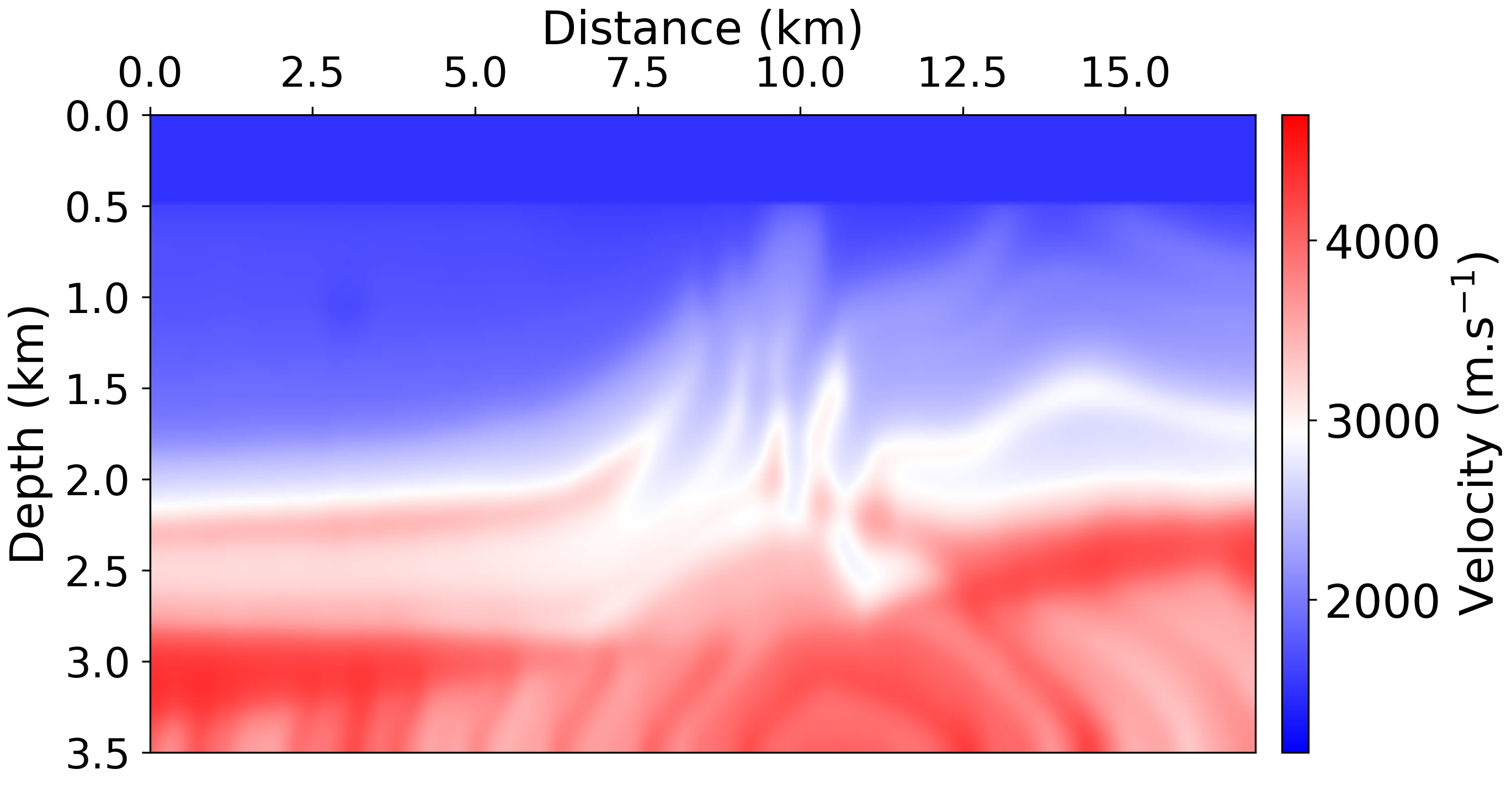} \\
			{\small Initial model 2} & {\small Initial model 3} \\
			\includegraphics[width=0.33\textwidth]{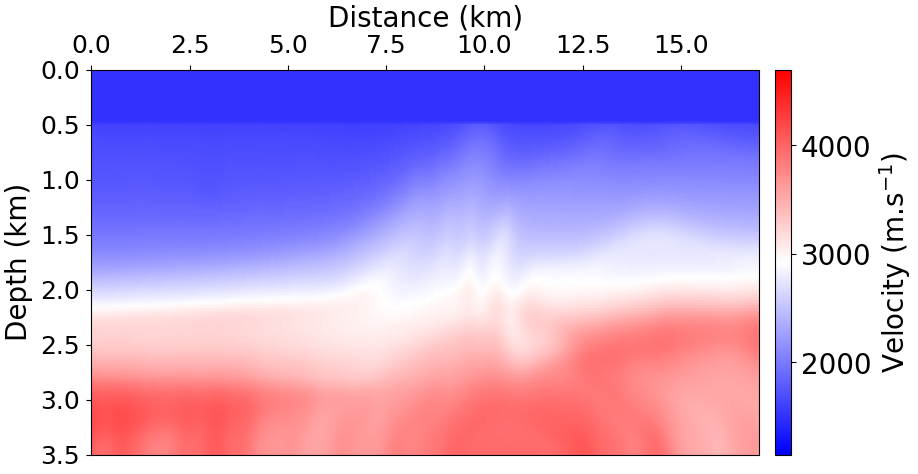}
			&
			\includegraphics[width=0.33\textwidth]{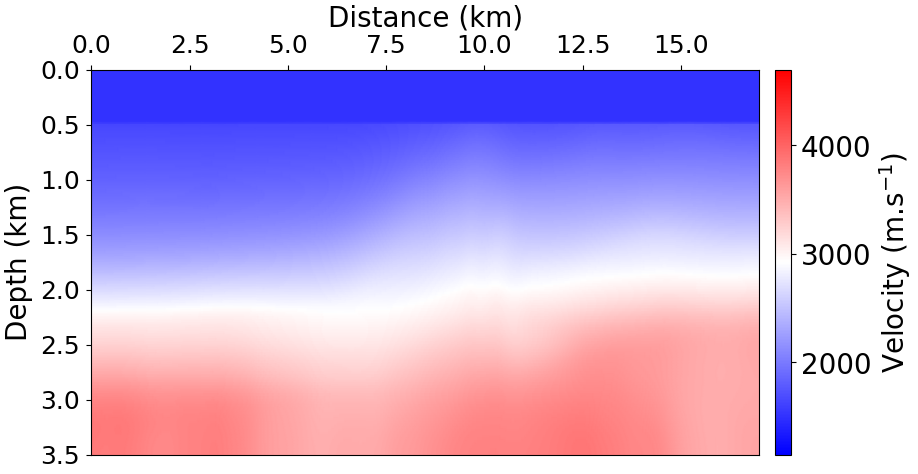}
		\end{tabular}
		\caption{True velocity model of the Marmousi test case and several initial conditions for the inversion problem.}
		\label{fig:models}
	\end{figure}
	
	In Figure \ref{fig:marmousi} we show the results obtained for the three sets of parameters $(p,q)=(1,1),(1,2)$ and $(2,2)$ for $\Tc_{p,q}^\lambda$. These results can be compared further with two other misfit functions: the classical $L^2$ distance and the $\KR$ norm \eqref{eq:KRscal}. These latter are applied componentwise on the $v_x$ and $v_z$ signals. Figure \ref{fig:marmousi} shows the final reconstructed velocity model $c$ obtained from the (local) minimization of the misfit function, starting from several initial conditions (Figure \ref{fig:models}, \textit{initial models} from one to three) obtained as smoothed versions of the true velocity model we aim at recovering (Figure \ref{fig:models}, \textit{true model}).
	
	\begin{figure}
		\centering
		\setlength{\tabcolsep}{0pt}
		\renewcommand{\arraystretch}{2.15}
		\begin{tabular}{C{0.05\textwidth}C{0.3\textwidth}C{0.3\textwidth}C{0.3\textwidth}C{0.05\textwidth}}
			&{\small Initial model 1} & {\small Initial model 2} & {\small Initial model 3} & \\
			{\small $L^2$}
			&\includegraphics[width=0.28\textwidth]{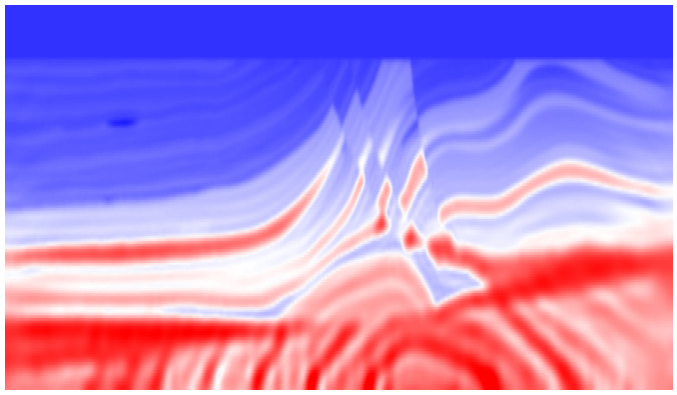} &
			\includegraphics[width=0.28\textwidth]{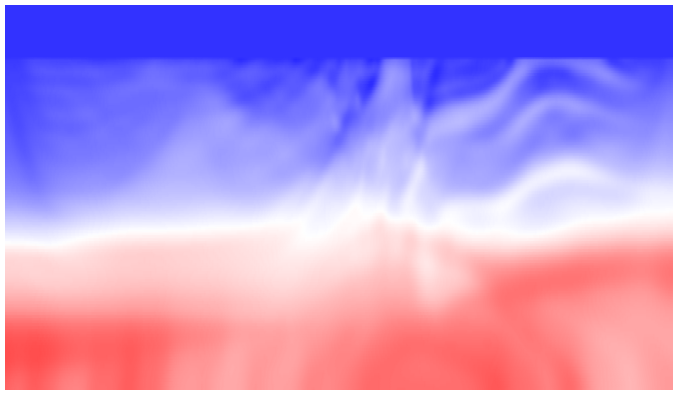} & \includegraphics[width=0.28\textwidth]{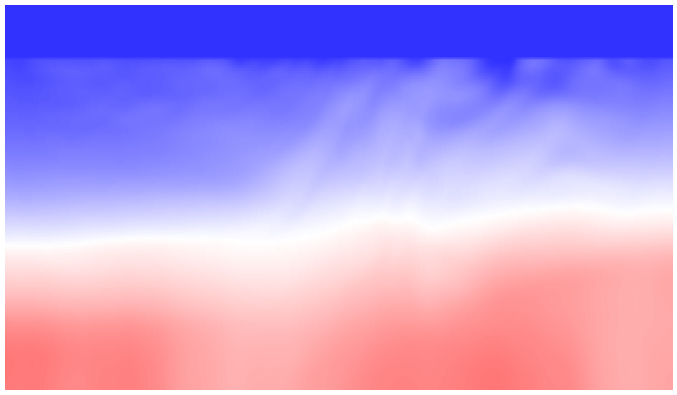} & \\
			{\small $\KR$}
			&\includegraphics[width=0.28\textwidth]{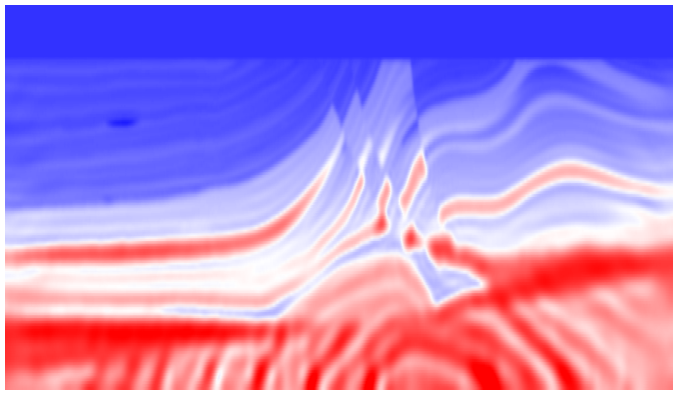} &
			\includegraphics[width=0.28\textwidth]{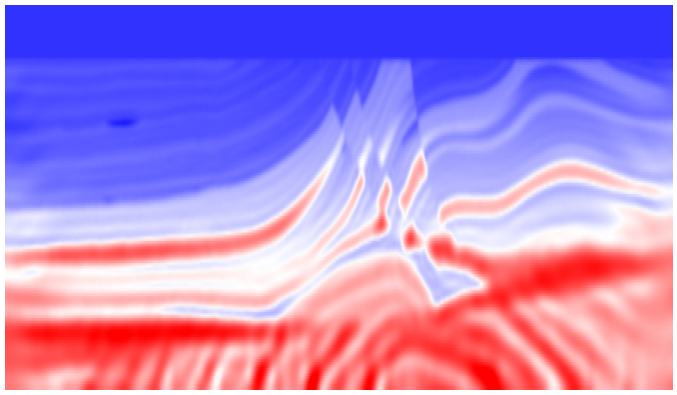} & \includegraphics[width=0.28\textwidth]{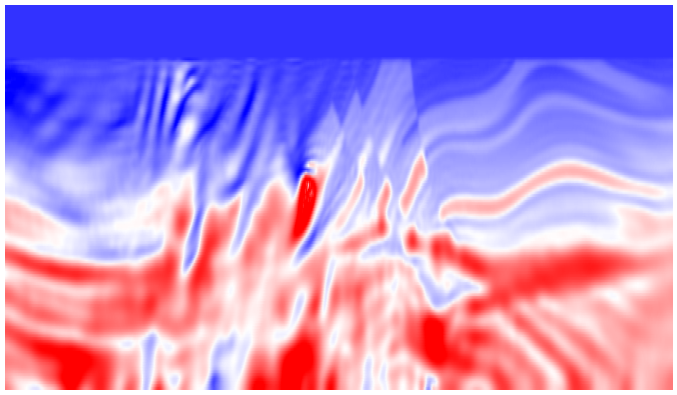} & \\
			{\small $\Tc_{1,1}^\lambda$}
			&\includegraphics[width=0.28\textwidth]{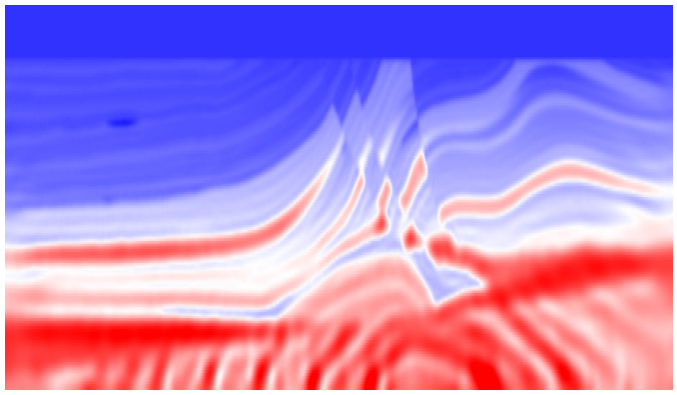} &
			\includegraphics[width=0.28\textwidth]{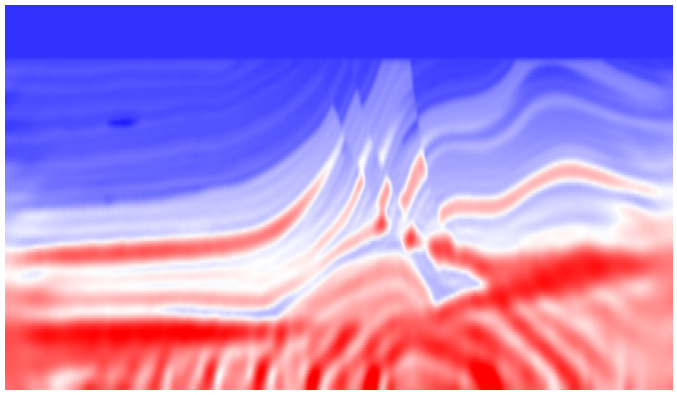} & \includegraphics[width=0.28\textwidth]{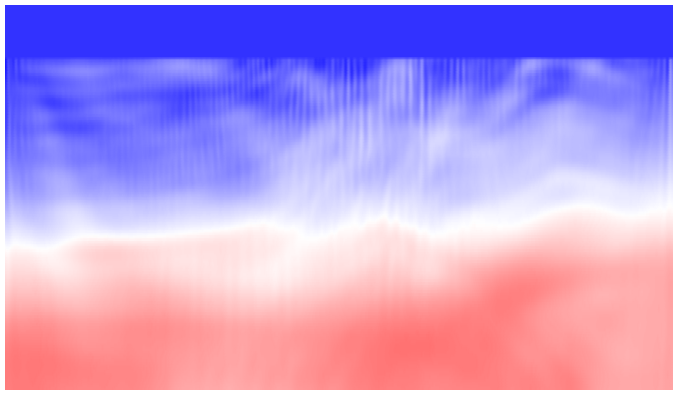} & \\
			{\small $\Tc_{1,2}^\lambda$}
			&\includegraphics[width=0.28\textwidth]{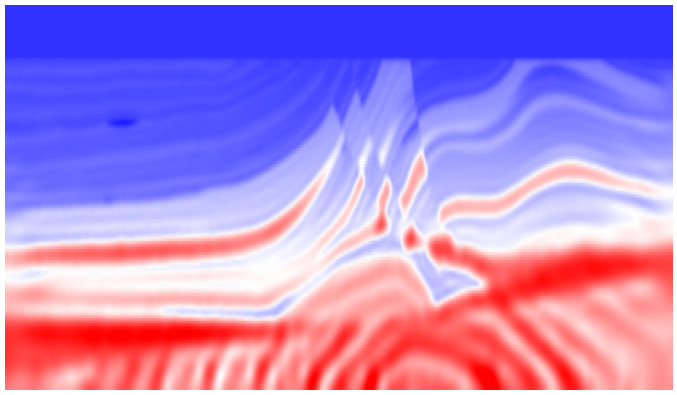} &
			\includegraphics[width=0.28\textwidth]{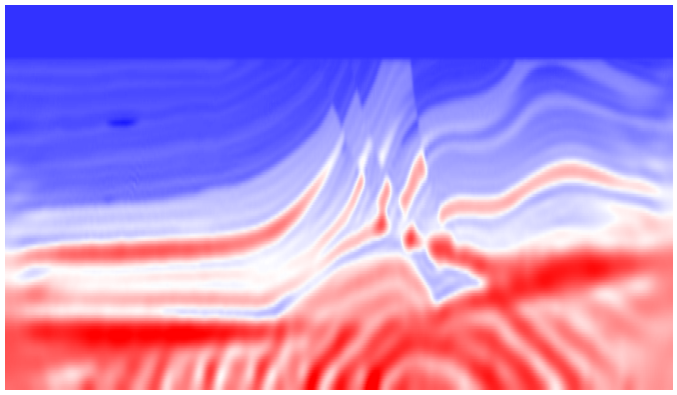} & \includegraphics[width=0.28\textwidth]{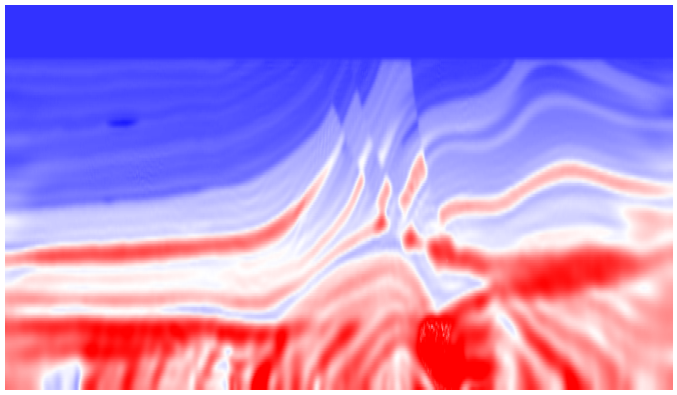} & \\
			{\small $\Tc_{2,2}^\lambda$}
			&\includegraphics[width=0.28\textwidth]{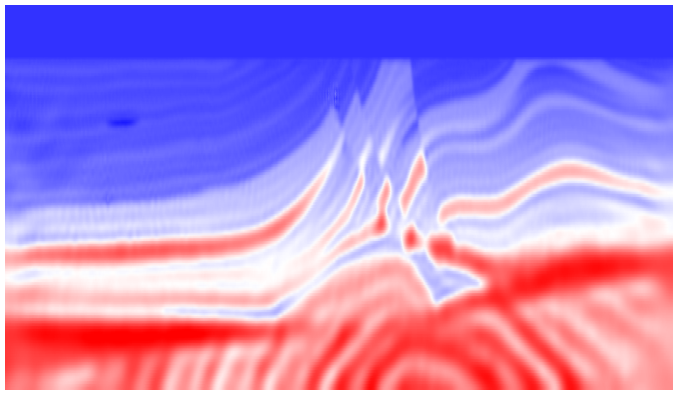} &
			\includegraphics[width=0.28\textwidth]{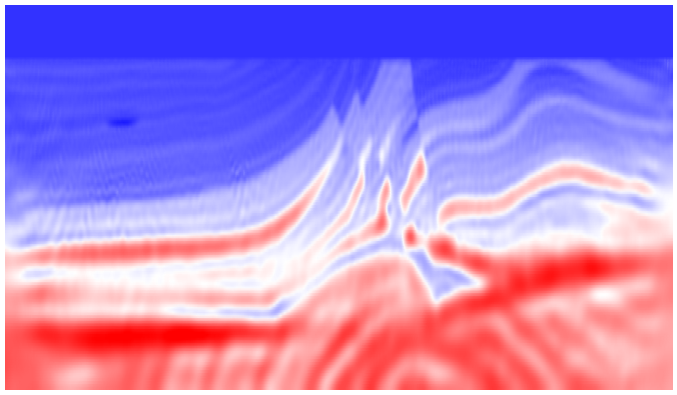} & \includegraphics[width=0.28\textwidth]{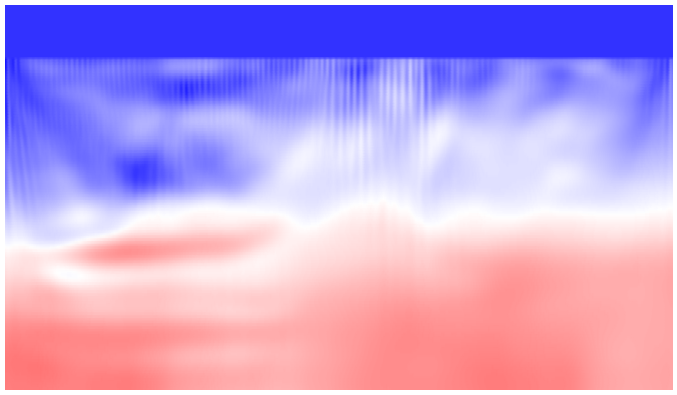} & \\
		\end{tabular}
		\caption{Comparison of the reconstruction of the Marmousi velocity model using the $L^2$ distance (first row), the $\KR$ norm \eqref{eq:KRscal} (second row) and $\mathcal{T}^{\lambda}_{p,q}$ (from third to fifth row), using several initial conditions for the inverse problem (Figure \ref{fig:models}). For the misfit $\Tc_{p,q}^\lambda$, $\lambda$ has been tuned manually to provide the better performance: we used $\lambda\approx 10^{-2}\ell$ for $(p,q)=(1,1)$, $\lambda\approx 0.05 \ell$ for $(p,q)=(1,2)$ and $\lambda\approx 10^{-3}\ell$ for $(p,q)=(2,2)$, with $\ell$ size of the squared domain (see Section \ref{ssec:seismo}).
		}
		\label{fig:marmousi}
	\end{figure}
	
	The $L^2$ distance does not present a big convexity region around the minimizer, as highlighted in Figure \ref{fig:shift}, which is the main drawback of using this simple misfit function. Consequently, the true velocity model can be retrieved only starting from a sufficiently close initial condition, else the l-BFGS algorithm would fall in a meaningless local minimum.
	Using optimal transport based distances may sensibly increase the convexity region, as recently demonstrated, thanks to their horizontal sensitivity. Using the $\KR$ norm as misfit function allows indeed to solve the problem starting from further initial conditions.
	Notice that since the $\KR$ norm is applied separately to zero-mean scalar signals, therefore with no mass imbalance, it does not suffer from a higher bound on $\lambda$ and the inverse problem is robust with respect to the choice of this parameter (as long as it is sufficiently high to allow proper displacement of mass, see Section \ref{ssec:lambda}). In this example we have chosen $\lambda=\ell$ ($\ell$ size of the squared domain, see Section \ref{ssec:seismo}). The horizontal sensitivity is however limited by the fact that the data are signed (see Examples \ref{ex:deltas_KRvect} and \ref{ex:KRshift}). By lifting signed signals into the space of measures with symmetric positive semi-definite values, the misfit $\Tc_{p,q}^\lambda$ allows to go further. Among the three configurations, only the case $(p,q)=(1,2)$ provides considerably better results. The case $(p,q)=(1,1)$, the vectorial extension of the $\KR$ norm, suffers too much from the mass imbalance to allow a meaningful calibration of $\lambda$. The case $(p,q)=(2,2)$, as shown in Section \ref{ssec:shift}, has a limited horizontal sensitivity and does not provide any advantage in terms of robustness with respect to the $\KR$ norm. On the other hand, it can be solved more efficiently with respect to the other two cases and the $\KR$ norm.
	
	\section{Conclusions}
	
	In this work we provided a misfit function for applications to inverse problems as a generalization of the $L^1$ optimal transport problem. By lifting general signed data to non-negative measures defined on a higher dimensional vector space, we recover a meaningful notion of transport leading to a more robust inversion problem.
	By choosing an appropriate lift function no information is lost on the data, differently from other previously proposed approaches.
	We considered the lift \eqref{eq:Pauli}, that is using Pauli matrices to transform a vector with signed coordinates into a positive semi-definite matrix, but other ideas can be followed, our extension providing a general framework.
	The advantage of extending the $L^1$ transport, with respect to other optimal transport problems, is that it can be computed efficiently without a further increase in the computational complexity given by the vector extension. In particular, the quadratic penalization case may offer better optimization strategies with respect to the one considered here.
	On the other hand, the vector extension requires to deal with possible imbalances of mass which make the calibration of the model difficult.
	Our analysis of the role played by the parameter $\lambda$ gives some insights on its calibration, but the model is quite sensitive to it. This in particular depends on the lift function considered and alternatives may be found to soften the problem. Moreover, the lift function may be tailored to the specific problem at hands taking into account also physical considerations in its design.
	
	The application to FWI shows good preliminary results. On the Marmousi case study, a benchmark for FWI, the misfit function we proposed reveals extremely robust being able to recover the true velocity model starting from several initial conditions which are already extremely difficult for standard approaches. Testing our model on even more difficult initial conditions did not provide interesting results. This is propably related to the aforementioned high sensitivity to the calibration of $\lambda$. We will try to tackle this issue in future works. Moreover, to fully assess the potential of our approach, the model needs to be tested on bigger and even real-world seismic data sets. This is another objective we will pursue.

	\section*{Acknowledgements}
	
	This study was partially funded by the SEISCOPE consortium (\textit{http://seiscope2.osug.fr}), sponsored by AKERBP, CGG, DUG, EXXONMOBIL, GEOLINKS, JGI, PETROBRAS, SHELL, SINOPEC and TOTALENERGIES. This study was granted access to the HPC resources provided by the GRICAD infrastructure (https://gricad.univ-grenoble-alpes.fr), which is supported by Grenoble research communities, the HPC resources of Cray Marketing Partner Network (\url{https://partners.cray.com}), and those of CINES/IDRIS/TGCC under the allocation 046091 made by GENCI.
	The work of GT was partly supported by the Bézout Labex (New Monge Problems), funded by ANR, reference ANR-10-LABX-58.

	\bibliographystyle{plain}      
	\bibliography{refs}   

\end{document}